\newtheorem{thm}{Theorem}[section]
\newtheorem{cor}[thm]{Corollary}
\newtheorem{lem}[thm]{Lemma}
\newtheorem{prop}[thm]{Proposition}
\theoremstyle{definition}
\theoremstyle{remark}
\newtheorem{rem}[thm]{Remark}
\theoremstyle{conclusion}
\theoremstyle{question}
\numberwithin{equation}{section}
\newcommand{\gee}{\geqslant}
\begin{document}
\title[Conformally invariant systems with mixed order]{Classification of solutions to conformally invariant systems with mixed order and exponentially increasing or nonlocal nonlinearity}

\author{Wei Dai, Guolin Qin}

\address{School of Mathematical Sciences, Beihang University (BUAA), Beijing 100191, P. R. China, and Key Laboratory of Mathematics, Informatics and Behavioral Semantics, Ministry of Education, Beijing 100191, P. R. China}
\email{weidai@buaa.edu.cn}

\address{Institute of Applied Mathematics, Chinese Academy of Sciences, Beijing 100190, P. R. China, and University of Chinese Academy of Sciences, Beijing 100049, P. R. China}
\email{qinguolin18@mails.ucas.ac.cn}

\thanks{Wei Dai is supported by the NNSF of China (No. 12222102 and 11971049) and the Fundamental Research Funds for the Central Universities.}

\begin{abstract}
In this paper, without any assumption on $v$ and under extremely mild assumption $u(x)=O(|x|^{K})$ at $\infty$ for some $K\gg1$ arbitrarily large, we prove classification of solutions to the following conformally invariant system with mixed order and exponentially increasing nonlinearity in $\mathbb{R}^{2}$:
\begin{equation*}\\\begin{cases}
(-\Delta)^{\frac{1}{2}}u(x)=e^{pv(x)}, \qquad x\in\mathbb{R}^{2}, \\
-\Delta v(x)=u^{4}(x), \qquad x\in\mathbb{R}^{2},
\end{cases}\end{equation*}
where $p\in(0,+\infty)$, $u\geq 0$ and satisfies the finite total curvature condition $\int_{\mathbb{R}^{2}}u^{4}(x)\mathrm{d}x<+\infty$. In order to show integral representation formula and crucial asymptotic property for $v$, we derive and use an $\exp^{L}+L\ln L$ inequality, which is itself of independent interest. When $p=\frac{3}{2}$, the system is closely related to single conformally invariant equations $(-\Delta)^{\frac{1}{2}}u=u^{3}$ and $-\Delta v=e^{2v}$ on $\mathbb{R}^{2}$, which have been quite extensively studied (cf. \cite{BF,C,CY,CL,CLL,CLZ} etc). We also derive classification results for nonnegative solutions to conformally invariant system with mixed order and Hartree type nonlocal nonlinearity in $\mathbb{R}^{3}$. Extensions to mixed order conformally invariant systems in $\mathbb{R}^{n}$ with general dimensions $n\geq3$ are also included.
\end{abstract}
\maketitle {\small {\bf Keywords:} Classification of solutions; Conformally invariant; Systems with mixed order; Exponentially increasing nonlinearity; Method of moving spheres; $\exp^{L}+L\ln L$ inequality; Fractional Laplacians; Nonlocal nonlinearity. \\

{\bf 2020 MSC} Primary: 35M30; Secondary: 35A02, 53C18, 35R11.}

\section{Introduction}

\subsection{Conformally invariant systems with mixed order and exponentially increasing nonlinearity in $\mathbb{R}^{2}$}
In this paper, we are mainly concerned with the following conformally invariant system with mixed order and exponentially increasing nonlinearity in $\mathbb{R}^{2}$:
\begin{equation}\label{PDE}\\\begin{cases}
(-\Delta)^{\frac{1}{2}}u(x)=e^{pv(x)}, \qquad x\in\mathbb{R}^{2}, \\ \\
-\Delta v(x)=u^{4}(x), \qquad x\in\mathbb{R}^{2},
\end{cases}\end{equation}
where $p\in(0,+\infty)$, $u\geq0$ and satisfies the finite total curvature condition $\int_{\mathbb{R}^{2}}u^{4}(x)\mathrm{d}x<+\infty$.

\smallskip

We assume $(u,v)$ is a pair of classical solution to the planar system \eqref{PDE} in the sense that $u\in C^{1,\epsilon}_{loc}(\mathbb{R}^{2})\cap \mathcal{L}_{1}(\mathbb{R}^{2})$ with arbitrarily small $\epsilon>0$ and $v\in C^{2}(\mathbb{R}^{2})$. The square root of the Laplacian $(-\Delta)^{\frac{1}{2}}$ is a particular case of general fractional Laplacians $(-\Delta)^{\frac{\alpha}{2}}$ with $\alpha=1$. In $\mathbb{R}^{n}$ with $n\geq1$, for any $u\in C^{[\alpha],\{\alpha\}+\epsilon}_{loc}(\mathbb{R}^{n})\cap\mathcal{L}_{\alpha}(\mathbb{R}^{n})$, the nonlocal operator $(-\Delta)^{\frac{\alpha}{2}}$ ($0<\alpha<2$) is defined by (see \cite{CT,CG,CLL,CLM,DQ,S})
\begin{equation}\label{nonlocal defn}
  (-\Delta)^{\frac{\alpha}{2}}u(x)=C_{n,\alpha} \, P.V.\int_{\mathbb{R}^n}\frac{u(x)-u(y)}{|x-y|^{n+\alpha}}\mathrm{d}y:=C_{n,\alpha}\lim_{\varepsilon\rightarrow0}\int_{|y-x|\geq\varepsilon}\frac{u(x)-u(y)}{|x-y|^{n+\alpha}}\mathrm{d}y,
\end{equation}
where $[\alpha]$ denotes the integer part of $\alpha$, $\{\alpha\}:=\alpha-[\alpha]$, the constant $C_{n,\alpha}=\left(\int_{\mathbb{R}^{n}} \frac{1-\cos \left(2 \pi \zeta_{1}\right)}{|\zeta|^{n+\alpha}} d \zeta\right)^{-1}$ and the (slowly increasing) function space
\begin{equation}\label{0-1-space}
  \mathcal{L}_{\alpha}(\mathbb{R}^{n}):=\left\{u: \mathbb{R}^{n}\rightarrow\mathbb{R}\,\big|\,\int_{\mathbb{R}^{n}}\frac{|u(x)|}{1+|x|^{n+\alpha}}\mathrm{d}x<+\infty\right\}.
\end{equation}

\smallskip

The fractional Laplacians $(-\Delta)^{\frac{\alpha}{2}}$ can also be defined equivalently (see \cite{CLM}) by Caffarelli and Silvestre's extension method (see \cite{CS}) for $u\in C^{[\alpha],\{\alpha\}+\epsilon}_{loc}(\mathbb{R}^{n})\cap\mathcal{L}_{\alpha}(\mathbb{R}^{n})$. For instance, the square root of the Laplacian $(-\Delta)^{\frac{1}{2}}$ can be defined equivalently for any $u\in C^{1,\epsilon}_{loc}(\mathbb{R}^{n})\cap\mathcal{L}_{1}(\mathbb{R}^{n})$ by
\begin{equation}\label{extension}
  (-\Delta)^{\frac{1}{2}}u(x):=-C_{n}\lim_{y\rightarrow0+}\frac{\partial U(x,y)}{\partial y}
  =-C_{n}\lim_{y\rightarrow0+}\int_{\mathbb{R}^{n}}\frac{|x-\xi|^{2}-ny^{2}}{\big(|x-\xi|^{2}+y^{2}\big)^{\frac{n+3}{2}}}u(\xi)d\xi,
\end{equation}
where $U(x,y)$ is the harmonic extension of $u(x)$ in $\mathbb{R}^{n+1}_{+}=\{(x,y)| \, x\in\mathbb{R}^{n}, \, y\geq0\}$. The definition \eqref{nonlocal defn} of the fractional Laplacian $(-\Delta)^{\frac{\alpha}{2}}$ can also be extended further to distributions in the space $\mathcal{L}_{\alpha}(\mathbb{R}^{n})$ by
\begin{equation}\label{distribution}
  \left\langle(-\Delta)^{\frac{\alpha}{2}}u,\phi\right\rangle=\int_{\mathbb{R}^n}u(x)(-\Delta)^{\frac{\alpha}{2}}\phi(x) \mathrm{d}x, \qquad \forall\phi\in C^{\infty}_0(\mathbb{R}^n).
\end{equation}

\smallskip

Throughout this paper, we define $(-\Delta)^{\frac{1}{2}}u$ by definition \eqref{nonlocal defn} and its equivalent definition \eqref{extension} for $u\in C^{1,\epsilon}_{loc}(\mathbb{R}^{2})\cap\mathcal{L}_{1}(\mathbb{R}^{2})$. Due to the nonlocal virtue of $(-\Delta)^{\frac{1}{2}}$, we need the assumption $u\in C^{1,\epsilon}_{loc}(\mathbb{R}^{2})$ with arbitrarily small $\epsilon>0$ (merely $u\in C^{1}$ is not enough) to guarantee that $(-\Delta)^{\frac{1}{2}}u\in C(\mathbb{R}^{2})$ (see \cite{CLM,S}), and hence $u$ is a classical solution to the planar system \eqref{PDE} in the sense that $(-\Delta)^{\frac{1}{2}}u$ is pointwise well defined and continuous in the whole plane $\mathbb{R}^{2}$.

\smallskip

The fractional Laplacian $(-\Delta)^{\frac{\alpha}{2}}$ is a nonlocal integral-differential operator. It can be used to model diverse physical phenomena, such as anomalous diffusion and quasi-geostrophic flows, turbulence and water waves, molecular dynamics, and relativistic quantum mechanics of stars (see \cite{CV,Co} and the references therein). It also has various applications in conformal geometry, probability and finance (see \cite{Be,CT,CG} and the references therein). In particular, the fractional Laplacian can also be understood as the infinitesimal generator of a stable L\'{e}vy process (see \cite{Be}).

\medskip

Being different from higher dimensions $n\geq3$, one should notice that Liouville type theorem for super-harmonic functions (bounded from below) (cf. e.g. Theorem 3.1 in \cite{Farina}) only holds on the plane $\mathbb{R}^{2}$, hence $\mathbb{R}^{2}$ (endowed with the standard flat metric) is a parabolic Riemannian manifold. Euclidean space $\mathbb{R}^{n}$ with $n\gee 3$ (endowed with the standard flat metric) is not a parabolic Riemannian manifold. Indeed, for any $n\gee 3$, the non-constant positive function $u(x):=\Big(\frac{\sqrt{n(n-2)}}{1+|x|^{2}}\Big)^{\frac{n-2}{2}}$ solves the Yamabe equation $-\Delta u=u^{\frac{n+2}{n-2}}$ in $\mathbb{R}^{n}$. Thus the Liouville type theorem for super-harmonic functions (bounded from below) does not hold in $\mathbb{R}^{n}$ with $n\gee 3$.

\medskip

Consider fractional order or higher order geometrically interesting conformally invariant equation of the form
\begin{equation}\label{GPDE}
  (-\Delta)^{\frac{\alpha}{2}}u=u^{\frac{n+\alpha}{n-\alpha}} \qquad \text{in} \,\, \mathbb{R}^{n},
\end{equation}
where $n\geq1$ and $\alpha\in(0,+\infty)$. We say \eqref{GPDE} has subcritical, critical or super-critical order if $\alpha<n$, $\alpha=n$ or $\alpha>n$ respectively. In the special case $n>\alpha=2$, equation \eqref{GPDE} is the the well-known Yamabe problem. In higher order case that $2\leq\alpha\neq n$ is an even integer, equation \eqref{PDE} arises from the conformal metric problems, prescribing $Q$-curvature problems, conformally covariant Paneitz operators and GJMS operators and so on ... (see e.g. \cite{Branson,CY,CL,CL1,CL2,FKT,Gra,GJMS,Juhl,Lin,Li,N,P,WX,Xu,Z} and the references therein). In the fractional order or fractional higher order case that $\alpha\in(0,n)\setminus 2\mathbb{N}$, conformally invariant equation \eqref{GPDE} is closely related to the fractional $Q$-curvature problems and the study of fractional conformally covariant Paneitz and GJMS operators and so on ... (cf. e.g. \cite{CC,CG,JLX1} and the references therein).

\smallskip

The quantitative and qualitative properties of solutions to conformally invariant equations \eqref{GPDE} have been extensively studied. In the special case $n>\alpha=2$, positive $C^2$ smooth solution to the Yamabe equation \eqref{GPDE} has been classified by Gidas, Ni and Nirenberg in \cite{GNN1}, and Caffarelli, Gidas and Spruck in \cite{CGS}. When $n>\alpha=4$, Lin \cite{Lin} proved the classification results for all the positive $C^{4}$ smooth solutions of \eqref{GPDE}. In \cite{WX}, among other things, Wei and Xu classified all the positive $C^{\alpha}$ smooth solutions of \eqref{GPDE} when $\alpha\in(0,n)$ is an even integer. In \cite{CLO}, by developing the method of moving planes in integral forms, Chen, Li and Ou classified all the positive $L^{\frac{2n}{n-\alpha}}_{loc}$ solutions to the equivalent integral equation of the PDE \eqref{GPDE} for general $\alpha\in(0,n)$, as a consequence, they obtained the classification results for positive weak solutions to PDE \eqref{GPDE}, moreover, they also derived classification results for positive $C^{\alpha}$ smooth solutions to \eqref{GPDE} provided $\alpha\in(0,n)$ is an even integer. Subsequently, Chen, Li and Li \cite{CLL} developed a direct method of moving planes for fractional Laplacians $(-\Delta)^{\frac{\alpha}{2}}$ with $0<\alpha<2$ and classified all the $C^{1,1}_{loc}\cap\mathcal{L}_{\alpha}$ positive solutions to the PDE \eqref{GPDE} directly as an application (see also \cite{CLZ} for the direct method of moving spheres for $(-\Delta)^{\frac{\alpha}{2}}$).

\smallskip

One can observe that all the above mentioned classification results of nonnegative classical solutions are focused on the cases that $0<\alpha<2$ or $2\leq \alpha<n$ is an even integer. When $\alpha=3<n$, Dai and Qin \cite{DQ} derived the first classification result of nonnegative classical solutions to third order equation \eqref{GPDE} under weak integrability assumption. Subsequently, by taking full advantage of the Poisson representation formulae for $(-\Delta)^{\frac{\alpha}{2}}$ and introducing the outer-spherical average associated with $(-\Delta)^{\frac{\alpha}{2}}$, among other things, Cao, Dai and Qin \cite{CDQ0} established super poly-harmonic property of nonnegative solutions and hence classified all nonnegative classical solutions to \eqref{GPDE} for any real number $\alpha\in(0,n)$. In the super-critical order cases, for classification results of positive classical solutions to equation \eqref{GPDE} and related IE with negative exponents, please refer to \cite{Li,N,Xu} and the references therein.

\medskip

In the limiting case (or the so-called critical order case) $n=\alpha=2$, by using the method of moving planes, Chen and Li \cite{CL} classified all the $C^{2}$ smooth solutions with finite total curvature of the equation
\begin{equation}\label{0-1}\\\begin{cases}
-\Delta u(x)=e^{2u(x)},  \qquad  x\in\mathbb{R}^{2}, \\ \\
\int_{\mathbb{R}^{2}}e^{2u(x)}\mathrm{d}x<+\infty.
\end{cases}\end{equation}
They proved that there exists some point $x_{0}\in\mathbb{R}^{2}$ and some $\lambda>0$ such that
$$u(x)=\ln\left[\frac{2\lambda}{1+\lambda^{2}|x-x_{0}|^{2}}\right].$$
Equations of type \eqref{0-1} arise from a variety of situations, such as from prescribing Gaussian curvature in geometry and from combustion theory in physics.

\smallskip

Let us briefly review the geometry background of the planar equation \eqref{0-1}. Let $g_{\mathbf{S}^{2}}$ be the standard metric on the unit $2$-sphere $\mathbf{S}^{2}$. If we consider the conformal metric $\hat{g}:=e^{2w}g_{\mathbf{S}^{2}}$, then the Gaussian curvature $K_{\hat{g}}$ satisfies the following PDE:
\begin{equation}\label{a1}
  \Delta_{g_{\mathbf{S}^{2}}}w+K_{\hat{g}}e^{2w}=1 \qquad \text{on} \,\, \mathbf{S}^{2},
\end{equation}
where $\Delta_{g_{\mathbf{S}^{2}}}$ denotes the Laplace-Beltrami operator with respect to the standard metric $g_{\mathbf{S}^{2}}$ on the sphere $\mathbf{S}^{2}$. In particular, if we take $K_{\hat{g}}\equiv1$ in \eqref{a1}, then from the Cartan-Hadamard theorem, we can deduce that $w=12\ln\left|J\phi\right|$, where $J\phi$ denotes the Jacobian of the transformation $\phi$. That is, $\hat{g}:=e^{2w}g_{\mathbf{S}^{2}}$ is the pull back of the standard metric $g_{\mathbf{S}^{2}}$ through some conformal transformation $\phi$ (i.e., $\hat{g}$ is isometric to $g_{\mathbf{S}^{2}}$). Through the stereographic projection $\pi$ from $\mathbf{S}^{2}$ to $\mathbb{R}^{2}$, one can see that equation \eqref{0-1} on $\mathbb{R}^{2}$ is equivalent to the equation \eqref{a1} on $\mathbf{S}^{2}$ with $K_{\hat{g}}\equiv1$.

\smallskip

In general, suppose $(M,g)$ is a smooth compact $n$-dimensional Riemannian manifold, a metrically defined operator $A$ is said to be conformally covariant if and only if
\begin{equation}\label{a2}
  A_{\hat{g}}(\phi)=e^{-bw}A_{g}(e^{aw}\phi)
\end{equation}
for all functions $\phi\in C^{\infty}(M)$, where $\hat{g}:=e^{2w}g$ is a conformal metric of $g$. When $n=2$, the Laplace-Beltrami operator $\Delta_{g}$ is conformally covariant with $a=0$ and $b=2$. When $n\geq 3$, the conformal Laplace-Beltrami operator $L^{n}_{g}:=-\Delta_{g}+\frac{n-2}{4(n-1)}R_{g}$ is conformally covariant with $a=\frac{n-2}{2}$ and $b=\frac{n+2}{2}$. Suppose $(M,g)$ is a smooth compact $4$-dimensional Riemannian manifold, Paneitz \cite{P} extended the Laplace-Beltrami operator $\Delta_{g}$ to the fourth order operator $P^{4}_{g}$ on $(M,g)$ which is defined by
\begin{equation}\label{P1}
  P^{4}_{g}:=\Delta_{g}^{2}-\delta\left[\left(\frac{2}{3}R_{g}g-2Ric_{g}\right)d\right],
\end{equation}
where $\delta$ is the divergent operator, $R_{g}$ is the scalar curvature of $g$ and $Ric_{g}$ is the Ricci curvature of $g$. The Paneitz operator $P^{4}_{g}$ has the conformally covariant property with $a=0$ and $b=4$. In \cite{Branson}, Branson generalized the Paneitz operator $P^{4}_{g}$ to conformally covariant operator $P^{n}_{g}$ on manifolds $(M,g)$ of other dimensions $n\gee3$ with $a=\frac{n-4}{2}$ and $b=\frac{n+4}{2}$.

\smallskip

On general compact Riemannian manifold $(M,g)$ of dimension $n$, the existence of such a conformally covariant operator $P_{n,g}$ with $a=0$ and $b=n$ for even dimensional manifold (i.e., $n=2m$) was first derived by Graham, Jenne, Mason and Sparling in \cite{GJMS}. The authors in \cite{GJMS} discovered the conformally covariant GJMS operator with the principle part $(-\Delta_{g})^{\frac{n}{2}}$  (see also \cite{Branson,CY,Juhl,N}). However, it is only explicitly known for the Euclidean space $\mathbb{R}^{n}$ with standard metric $g$ and hence for the $n$-sphere $\mathbf{S}^{n}$ with standard metric $g_{\mathbf{S}^{n}}$. The explicit formula for $P_{n,g_{\mathbf{S}^{n}}}$ on $\mathbf{S}^{n}$ with general integer $n\in\mathbb{N}^{+}$ is given by (cf. \cite{Branson,CY,GJMS,Juhl,N}):
\begin{equation}\label{GJMS-2}
P_{n,g_{\mathbf{S}^{n}}}\left(\cdot\right)=\prod_{k=1}^{\frac{n}{2}}\left[-\Delta_{g_{\mathbf{S}^{n}}}+\left(\frac{n}{2}-k\right)\left(\frac{n}{2}+k-1\right)\right]\left(\cdot\right), \qquad \text{if} \,\, n \,\, \text{is even},
\end{equation}
\begin{equation}\label{GJMS-1}
P_{n,g_{\mathbf{S}^{n}}}\left(\cdot\right)=\left[-\Delta_{g_{\mathbf{S}^{n}}}+\frac{(n-1)^{2}}{4}\right]^{\frac{1}{2}}
\prod_{k=1}^{\frac{n-1}{2}}\left[-\Delta_{g_{\mathbf{S}^{n}}}+\frac{(n-1)^{2}}{4}-k^{2}\right]\left(\cdot\right), \qquad \text{if} \,\, n \,\, \text{is odd}.
\end{equation}

\smallskip

On $(\mathbf{S}^{n},g_{\mathbf{S}^{n}})$, if we change the standard metric $g_{\mathbf{S}^{n}}$ to its conformal metric $\hat{g}:=e^{2w}g_{\mathbf{S}^{n}}$ for some smooth function $w$ on the $n$-sphere $\mathbf{S}^{n}$, since the GJMS operator $P_{n,g}$ is conformlly covariant, it turns out that there exists some scalar curvature quantity $Q_{n,g}$ of order $n$ such that
\begin{equation}\label{a3}
  -P_{n,g_{\mathbf{S}^{n}}}(w)+Q_{n,\hat{g}}e^{nw}=Q_{n,g_{\mathbf{S}^{n}}} \qquad \text{on} \,\, \mathbf{S}^{n}.
\end{equation}
When the metric $\hat{g}$ is isometric to the standard metric $g_{\mathbf{S}^{n}}$, then $Q_{n,\hat{g}}=Q_{n,g_{\mathbf{S}^{n}}}=(n-1)!$, and hence \eqref{a3} becomes
\begin{equation}\label{a4}
  -P_{n,g_{\mathbf{S}^{n}}}(w)+(n-1)!e^{nw}=(n-1)! \qquad \text{on} \,\, \mathbf{S}^{n}.
\end{equation}

\smallskip

We reformulate the equation \eqref{a4} on $\mathbb{R}^{n}$ by applying the stereographic projection. Let us denote by $\pi: \, \mathbf{S}^{n}\rightarrow \mathbb{R}^{n}$ the stereographic projection which maps the south pole on $\mathbf{S}^{n}$ to $\infty$. That is, for any $\zeta=(\zeta_{1},\cdots,\zeta_{n+1})\in\mathbf{S}^{n}\subset\mathbb{R}^{n+1}$ and $x=\pi(\zeta)=(x_{1},\cdots,x_{n})\in\mathbb{R}^{n}$, then it holds $\zeta_{k}=\frac{2x_{k}}{1+|x|^{2}}$ for $1\leq k\leq n$ and $\zeta_{n+1}=\frac{1-|x|^{2}}{1+|x|^{2}}$. Suppose $w$ is a smooth function on $\mathbf{S}^{n}$, define the function $u(x):=\phi(x)+w(\zeta)$ for any $x\in\mathbb{R}^{n}$, where $\zeta:=\pi^{-1}(x)$ and $\phi(x):=\ln\left[\frac{2}{1+|x|^{2}}\right]=\ln\left|J_{\pi^{-1}}\right|$. Since the GJMS operator $P_{n,g_{\mathbf{S}^{n}}}$ is the pull back under $\pi$ of the operator $(-\Delta)^{\frac{n}{2}}$ on $\mathbb{R}^{n}$ (see Theorem 3.3 in \cite{Branson1}), $w$ satisfies the equation \eqref{a4} on $\mathbf{S}^{n}$ if and only if the function $u$ satisfies
\begin{equation}\label{a5}
  (-\Delta)^{\frac{n}{2}}u=(n-1)!e^{nu} \qquad \text{in} \,\, \mathbb{R}^{n}.
\end{equation}

\smallskip

In \cite{CY}, for general integer $n$, Chang and Yang classified the $C^{n}$ smooth solutions to the critical order equations \eqref{a5} under decay conditions near infinity
\begin{equation}\label{a0}
  u(x)=\ln\left[\frac{2}{1+|x|^2}\right]+w\left(\zeta(x)\right)
\end{equation}
for some smooth function $w$ defined on $\mathbf{S}^n$. When $n=\alpha=4$, Lin \cite{Lin} proved the classification results for all the $C^{4}$ smooth solutions of
\begin{equation}\label{0-2}\\\begin{cases}
\Delta^{2}u(x)=6e^{4u(x)}, \,\,\,\,\,\,\,\, x\in\mathbb{R}^{4}, \\ \\
\int_{\mathbb{R}^{4}}e^{4u(x)}\mathrm{d}x<+\infty, \,\,\,\,\,\, u(x)=o\left(|x|^{2}\right) \,\,\,\, \text{as} \,\,\,\, |x|\rightarrow+\infty.
\end{cases}\end{equation}
When $n=\alpha$ is an even integer, Wei and Xu \cite{WX} classified the $C^{n}$ smooth solutions of \eqref{a5} with finite total curvature $\int_{\mathbb{R}^{n}}e^{nu(x)}\mathrm{d}x<+\infty$ under the assumption $u(x)=o\left(|x|^{2}\right)$ as $|x|\rightarrow+\infty$. Zhu \cite{Z} classified all the classical solutions with finite total curvature of the problem
\begin{equation}\label{0-4}\\\begin{cases}
(-\Delta)^{\frac{3}{2}}u(x)=2e^{3u(x)}, \,\,\,\,\,\,\,\, x\in\mathbb{R}^{3}, \\ \\
\int_{\mathbb{R}^{3}}e^{3u(x)}\mathrm{d}x<+\infty, \,\,\,\,\,\, u(x)=o(|x|^{2}) \,\,\,\, \text{as} \,\,\,\, |x|\rightarrow+\infty.
\end{cases}\end{equation}
The equation \eqref{0-4} can also be regarded as the following system with mixed order:
\begin{equation}\label{0-4s}\\\begin{cases}
(-\Delta)^{\frac{1}{2}}u(x)=2e^{3v(x)}, \,\,\,\,\,\,\,\, x\in\mathbb{R}^{3}, \\
-\Delta v(x)=u(x), \,\,\,\,\,\,\,\, x\in\mathbb{R}^{3}, \\
\int_{\mathbb{R}^{3}}e^{3v(x)}\mathrm{d}x<+\infty, \,\,\,\,\,\, v(x)=o\left(|x|^{2}\right) \,\,\,\, \text{as} \,\,\,\, |x|\rightarrow+\infty.
\end{cases}\end{equation}
One should note that the planar system \eqref{PDE} has higher degree of nonlinearity than \eqref{0-4s}. Recently, Yu \cite{Yu} classified $(u,v)\in C^{2}(\mathbb{R}^{4})\times C^{4}(\mathbb{R}^{4})$ to the following conformally invariant system
\begin{equation}\label{Yu-s}\\\begin{cases}
-\Delta u(x)=e^{3v(x)}, \,\,\quad  u(x)>0, \qquad  x\in\mathbb{R}^{4}, \\
\Delta^{2}v(x)=u^{4}(x), \qquad\,\,\,  x\in\mathbb{R}^{4}, \\
\int_{\mathbb{R}^{4}}u^{4}(x)\mathrm{d}x<+\infty, \quad \int_{\mathbb{R}^{4}}e^{3v(x)}\mathrm{d}x<+\infty, \,\,\,\,\,\, v(x)=o\left(|x|^{2}\right) \,\,\,\, \text{as} \,\,\,\, |x|\rightarrow+\infty.
\end{cases}\end{equation}
For more literatures on the quantitative and qualitative properties of solutions to fractional order or higher order conformally invariant PDE and IE problems, please refer to \cite{BKN,BF,CT,C,CQ,CDQ,CL,CL2,DQ0,DQ3,DQ4,Fall,Farina,FK,FKT,FLS,JLX1,LZ1,LZ} and the references therein.

\medskip

In this paper, by using the method of moving spheres, we classify all the classical solutions $(u,v)$ to the conformally invariant planar system \eqref{PDE} with mixed order and exponentially increasing nonlinearity. One can observe that, if we assume the relationship $u=e^{\frac{v}{2}}$ on $\mathbb{R}^{2}$ between $u$ and $v$ in the following two conformally invariant equations:
\begin{equation}\label{a6}
  (-\Delta)^{\frac{1}{2}}u=u^{3} \qquad \text{and} \qquad -\Delta v=e^{2v} \qquad \text{in} \,\, \mathbb{R}^{2}
\end{equation}
with the finite total curvature $\int_{\mathbb{R}^{2}}e^{2v(x)}\mathrm{d}x<+\infty$, the resulting system is
\begin{equation}\label{PDE+}\\\begin{cases}
(-\Delta)^{\frac{1}{2}}u(x)=e^{\frac{3}{2}v(x)}, \qquad x\in\mathbb{R}^{2}, \\ \\
-\Delta v(x)=u^{4}(x), \qquad x\in\mathbb{R}^{2}
\end{cases}\end{equation}
with the finite total curvature $\int_{\mathbb{R}^{2}}u^{4}(x)\mathrm{d}x<+\infty$, i.e., system \eqref{PDE} with $p=\frac{3}{2}$. For more literatures on the classification of solutions and Liouville type theorems for various PDE and IE problems via the methods of moving planes or spheres and the method of scaling spheres, please refer to \cite{CGS,CDQ0,CDZ,CQ,CY,CDQ,CL,CL1,CL0,CL2,CLL,CLO,CLZ,DHL,DLQ,DQ,DQ0,DQ3,DQ4,DGZ,DZ,FKT,GNN1,JLX,JLX1,Lin,Li,LZ1,LZ,NN,Pa,Serrin,WX,Xu,Yu,Z} and the references therein.

\smallskip

Our main classification result for system \eqref{PDE} is the following theorem.
\begin{thm}\label{thm0}
Assume $p\in(0,+\infty)$ and $(u,v)$ is a pair of classical solutions to the planar system \eqref{PDE} such that $u\geq0$ and $\int_{\mathbb{R}^{2}}u^{4}(x)\mathrm{d}x<+\infty$. Suppose there exists some $K\gg1$ arbitrarily large such that $u(x)=O\left(|x|^{K}\right)$ as $|x|\rightarrow+\infty$, then $(u,v)$ must take the unique form:
\begin{equation}\label{conclu1}
 u(x)=\left(\frac{6}{p}\right)^{\frac{1}{4}}\left(\frac{\mu}{1+\mu^{2}|x-x_{0}|^{2}}\right)^{\frac{1}{2}} , \qquad v(x)=\frac{3}{2p}\ln\left[\frac{\left(\frac{6}{p}\right)^{\frac{1}{6}}\mu}{1+\mu^2|x-x_{0}|^{2}}\right]
\end{equation}
for some $\mu>0$ and some $x_{0}\in\mathbb{R}^{2}$, and
\begin{equation}\label{conclu2}
  \int_{\mathbb{R}^{2}}u^{4}(x)\mathrm{d}x=\frac{6\pi}{p} \qquad \text{and} \qquad \int_{\mathbb{R}^{2}}e^{pv(x)}\mathrm{d}x=\left(\frac{6}{p}\right)^{\frac{1}{4}}\frac{2\pi}{\sqrt{\mu}}.
\end{equation}
\end{thm}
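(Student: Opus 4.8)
The plan is to run the method of moving spheres, with the bulk of the work consisting of first establishing — under the very weak hypothesis $u(x)=O(|x|^{K})$ — integral representation formulae for $(u,v)$ together with their sharp asymptotics at infinity, and only then carrying out the moving-spheres comparison on the resulting system. For the first part I would set $\beta:=\int_{\mathbb{R}^{2}}u^{4}<+\infty$ and introduce the potentials
\begin{equation*}
\underline{v}(x):=\frac{1}{2\pi}\int_{\mathbb{R}^{2}}\ln\frac{|y|}{|x-y|}\,u^{4}(y)\,dy,\qquad
\underline{u}(x):=\frac{1}{2\pi}\int_{\mathbb{R}^{2}}\frac{e^{pv(y)}}{|x-y|}\,dy ,
\end{equation*}
which satisfy $-\Delta\underline{v}=u^{4}$ and, once $e^{pv}\in L^{1}(\mathbb{R}^{2})$ is known, $(-\Delta)^{\frac12}\underline{u}=e^{pv}$. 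Since $-\Delta v=u^{4}\ge0$, $v$ is superharmonic, so the Liouville theorem for superharmonic functions on the parabolic plane $\mathbb{R}^{2}$ prevents $v$ from being bounded below (otherwise $v$ is constant and $u\equiv0$, contradicting $e^{pv}>0$). Because $u$ grows at most polynomially, a bootstrap using the two equations shows $v(x)\le C\ln(2+|x|)$, hence the harmonic function $v-\underline{v}$ has at most logarithmic growth and is therefore a constant $c_{2}$; thus $v=\underline{v}+c_{2}$ and $v(x)=-\frac{\beta}{2\pi}\ln|x|+c_{2}+o(1)$ as $|x|\to+\infty$. The decisive step is to upgrade this to $\int_{\mathbb{R}^{2}}e^{pv}<+\infty$, and this is exactly where the $\exp^{L}+L\ln L$ inequality enters: it bounds $\int e^{p\underline{v}}$ by an $L\ln L$-type quantity of $u^{4}$, which is finite thanks to $u^{4}\in L^{1}$ and the polynomial growth of $u$ (possibly after a preliminary bootstrap of the decay of $u$). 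Once $e^{pv}\in L^{1}$ is in hand, $\underline{u}$ is bounded and vanishes at infinity, so $u-\underline{u}$ is $(-\Delta)^{\frac12}$-harmonic and lies in $\mathcal{L}_{1}(\mathbb{R}^{2})$, hence is a constant $c_{1}\ge0$; but $c_{1}>0$ would give $u\ge c_{1}>0$ on all of $\mathbb{R}^{2}$, contradicting $\int u^{4}<+\infty$, so $c_{1}=0$ and I obtain
\begin{equation*}
u(x)=\frac{1}{2\pi}\int_{\mathbb{R}^{2}}\frac{e^{pv(y)}}{|x-y|}\,dy ,\qquad
v(x)=\frac{1}{2\pi}\int_{\mathbb{R}^{2}}\ln\frac{|y|}{|x-y|}\,u^{4}(y)\,dy+c_{2} ,
\end{equation*}
together with $u(x)=\frac{\gamma}{|x|}\bigl(1+o(1)\bigr)$, where $\gamma:=\frac{1}{2\pi}\int_{\mathbb{R}^{2}}e^{pv}>0$, and $e^{pv}(x)\asymp|x|^{-p\beta/2\pi}$ at infinity.

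For the moving spheres I would first record the conformal invariance: for $x_{0}\in\mathbb{R}^{2}$, $\lambda>0$, writing $x^{*}=x_{0}+\lambda^{2}(x-x_{0})/|x-x_{0}|^{2}$ and
\begin{equation*}
u_{x_{0},\lambda}(x)=\frac{\lambda}{|x-x_{0}|}\,u(x^{*}),\qquad
v_{x_{0},\lambda}(x)=v(x^{*})+\frac{3}{p}\ln\frac{\lambda}{|x-x_{0}|} ,
\end{equation*}
one checks, using the Kelvin covariance of $(-\Delta)^{\frac12}$ and of $-\Delta$ in $\mathbb{R}^{2}$ together with the harmonicity of $\ln|x-x_{0}|$, that $(u_{x_{0},\lambda},v_{x_{0},\lambda})$ again solves \eqref{PDE} in $\mathbb{R}^{2}\setminus\{x_{0}\}$ and transforms the integral equations covariantly. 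Fixing $x_{0}$, the asymptotics above and a maximum principle (in integral form, or via the Caffarelli--Silvestre extension for the nonlocal equation) give $u_{x_{0},\lambda}\le u$ and $v_{x_{0},\lambda}\le v$ on $\mathbb{R}^{2}\setminus B_{\lambda}(x_{0})$ for all small $\lambda>0$; let $\bar\lambda(x_{0})$ be the supremum of admissible $\lambda$. I would rule out $\bar\lambda(x_{0})=+\infty$: if it held for some $x_{0}$, evaluating $u_{x_{0},\lambda}\le u$ at the Kelvin image of a fixed point and letting $\lambda\to+\infty$ forces $u$ to vanish there, contradicting $u(x)\sim\gamma|x|^{-1}$ with $\gamma>0$. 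Hence $\bar\lambda(x_{0})<+\infty$ for every $x_{0}$, and by a Hopf-type strong maximum principle the inequalities become identities at $\lambda=\bar\lambda(x_{0})$: $u_{x_{0},\bar\lambda(x_{0})}\equiv u$ and $v_{x_{0},\bar\lambda(x_{0})}\equiv v$ for every $x_{0}\in\mathbb{R}^{2}$.

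Finally, the relation $u_{x,\bar\lambda(x)}\equiv u$ holding for all $x\in\mathbb{R}^{2}$ is precisely the hypothesis of a Li--Zhu type calculus lemma, which yields $\mu>0$ and $x_{0}\in\mathbb{R}^{2}$ with $u(x)=C\bigl(\mu/(1+\mu^{2}|x-x_{0}|^{2})\bigr)^{1/2}$ for some $C>0$. Inserting this into the representation for $v$ (equivalently, computing $e^{pv}=(-\Delta)^{\frac12}u$ from $(-\Delta)^{\frac12}\bigl((1+|z|^{2})^{-1/2}\bigr)=(1+|z|^{2})^{-3/2}$) gives $v(x)=\frac{3}{2p}\ln\bigl(c\mu/(1+\mu^{2}|x-x_{0}|^{2})\bigr)$ for some $c>0$; the remaining equation $-\Delta v=u^{4}$, through $\Delta\ln(1+|z|^{2})=4/(1+|z|^{2})^{2}$ in $\mathbb{R}^{2}$, forces $C^{4}=6/p$, and matching the two equations then gives $c^{3/2}=C$, i.e.\ $c=(6/p)^{1/6}$; this is exactly \eqref{conclu1}, and \eqref{conclu2} follows from $\int_{\mathbb{R}^{2}}(1+|z|^{2})^{-2}\,dz=\pi$ and $\int_{\mathbb{R}^{2}}(1+|z|^{2})^{-3/2}\,dz=2\pi$. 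I expect the main obstacle to be the first part — proving $\int_{\mathbb{R}^{2}}e^{pv}<+\infty$ and the sharp asymptotics while assuming only $u(x)=O(|x|^{K})$ — for which the $\exp^{L}+L\ln L$ inequality is the essential new ingredient; the moving-spheres step, though technically delicate because of the nonlocal operator and the coupling of the two equations, follows established patterns.
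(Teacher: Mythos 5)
Your overall roadmap (integral representations $\to$ asymptotics $\to$ moving spheres $\to$ Li--Zhang calculus lemma $\to$ compute constants) matches the paper's, but there are two genuine gaps in the middle that the paper handles quite differently.

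\textbf{The claim $\int_{\mathbb{R}^{2}}e^{pv}<+\infty$ is not something you can prove up front.} Given the asymptotic $v(x)=-\alpha\ln|x|+o(\ln|x|)$ with $\alpha:=\frac{1}{2\pi}\int u^{4}$, the integral $\int e^{pv}$ converges if and only if $\alpha p>2$, and nothing in the hypotheses tells you $\alpha p>2$ a priori. The $\exp^{L}+L\ln L$ inequality does not help here: in the paper it is applied only \emph{locally}, on the unit ball $B_{1}(x)$, to control $\int_{B_{1}(x)}\ln\frac{1}{|x-y|}\,u^{4}(y)\,dy$ in the course of proving the logarithmic asymptotics of $\zeta(x)=\frac{1}{2\pi}\int\ln\frac{|y|}{|x-y|}u^{4}\,dy$; there the $L\ln L$ norm of $u^{4}$ on $B_{1}(x)$ is finite because $B_{1}(x)$ is bounded and $u$ has polynomial growth. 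Globally, $u^{4}\in L^{1}$ together with polynomial growth does \emph{not} give $u^{4}\ln(u^{4}+1)\in L^{1}(\mathbb{R}^{2})$ (consider $u^{4}\sim(|x|^{2}\ln^{2}|x|)^{-1}$), so the inequality cannot be used to conclude $\int e^{pv}<\infty$. Correspondingly your sharp asymptotic $u(x)\sim\gamma|x|^{-1}$ with $\gamma=\frac{1}{2\pi}\int e^{pv}<\infty$ is also not available before you know $\alpha>\frac{2}{p}$.

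\textbf{The starting direction of the moving-spheres inequality depends on the sign of $\frac{3}{p}-\alpha$, and you have silently fixed the wrong-or-right one.} From the covariant identity for $v_{x_{0},\lambda}$ one gets
\begin{equation*}
w^{v}_{x_{0},\lambda}(x)=\frac{1}{2\pi}\int_{B_{\lambda}(x_{0})}\ln\Bigl[\cdots\Bigr]\bigl(u_{x_{0},\lambda}^{4}-u^{4}\bigr)\,dy+\Bigl(\tfrac{3}{p}-\alpha\Bigr)\ln\tfrac{\lambda}{|x-x_{0}|},
\end{equation*}
and the sign of the extra term $(\frac{3}{p}-\alpha)\ln\frac{\lambda}{|x-x_{0}|}$ (positive inside $B_{\lambda}$) forces the inequalities $w^{u}_{x_{0},\lambda},w^{v}_{x_{0},\lambda}\ge0$ in $B_{\lambda}$ (i.e.\ your ``$\le$ outside $B_{\lambda}$'') only when $\alpha\le\frac{3}{p}$; for $\alpha\ge\frac{3}{p}$ the opposite inequalities hold. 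You cannot start the sphere moving in a single fixed direction without first knowing where $\alpha$ sits relative to $\frac{3}{p}$. This is precisely why the paper runs the whole argument twice: for $\alpha>\frac{3}{p}$ it shows $\lambda_{x_{0}}=+\infty$ with $u_{x_{0},\lambda}\le u$ in $B_{\lambda}$, for $\frac{1}{p}\le\alpha<\frac{3}{p}$ it shows $\lambda_{x_{0}}=+\infty$ with $u_{x_{0},\lambda}\ge u$ in $B_{\lambda}$, and in both cases the Li--Zhang Lemma 11.2 yields $u\equiv\text{const}=0$, contradicting $(-\Delta)^{1/2}u=e^{pv}>0$. Only then does one conclude $\alpha=\frac{3}{p}$, which makes both one-sided arguments available simultaneously for small $\lambda$, forcing the Kelvin transform to be an identity and landing in Lemma 11.1. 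Your argument, by jumping straight to ``$\bar\lambda(x_{0})<\infty$ for every $x_{0}$'', skips the mechanism that actually pins down $\alpha=\frac{3}{p}$; without that, the moving-spheres comparison simply does not have a valid starting point.

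The rest — the potentials, the harmonic-difference/Liouville steps to get the integral system, the use of Lemma 11.1 at the end, and the arithmetic producing $C^{4}=\frac{6}{p}$ — is all in the right spirit and essentially matches the paper.
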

\begin{rem}\label{rem0}
One should note that, in Theorem \ref{thm0}, we do not need any assumption on $v$. The assumption ``$u(x)=O\left(|x|^{K}\right)$ at $\infty$ for some $K\gg1$ arbitrarily large" is an extremely mild condition, which is much weaker than the condition ``$u$ is bounded from above" and any other assumptions (such as $u(x)=o(|x|^2)$ at $\infty$ and \eqref{a0}) in previous literatures on higher order equations or systems mentioned above. In fact, the necessary condition for us to define $(-\Delta)^{\frac{1}{2}}u$ is $u\in\mathcal{L}_{1}$ (i.e., $\frac{u}{1+|x|^{3}}\in L^{1}(\mathbb{R}^{2})$), which already indicates that $u$ grows slowly and must has strictly less than linear growth at $\infty$ in the sense of integral. In addition to \eqref{conclu2} in Theorem \ref{thm0}, by direct calculations, one can also find that
\begin{equation}\label{a13}
  \int_{\mathbb{R}^{2}}e^{\frac{4}{3}pv(x)}\mathrm{d}x=\left(\frac{6}{p}\right)^{\frac{1}{3}}\pi.
\end{equation}
\end{rem}

\begin{rem}\label{rem4}
For general $s\in(0,1)$, through similar arguments as in the proof of Theorem \ref{thm0}, we can also classify all classical solutions $(u,v)$ to
\begin{equation}\label{gPDE-a}\\\begin{cases}
(-\Delta)^{s}u(x)=e^{pv(x)}, \qquad x\in\mathbb{R}^{2}, \\ \\
-\Delta v(x)=u^{\frac{2}{1-s}}(x), \qquad x\in\mathbb{R}^{2}.
\end{cases}\end{equation}
We focus particularly on the case $s=\frac{1}{2}$ in this paper for the sake of simplicity and leave the classification result for generalized problem \eqref{gPDE-a} to interested readers.
\end{rem}

\smallskip

We would like to mention some key ideas and main ingredients in our proof of Theorem \ref{thm0}.

\smallskip

First, from the finite total curvature condition $\int_{\mathbb{R}^{2}}u^{4}(x)\mathrm{d}x<+\infty$, we can derive the integral representation formula for $u$ (see Lemma \ref{lem0}), that is,
\begin{equation}\label{e1}
  u(x)=\frac{1}{2\pi}\int_{\mathbb{R}^{2}}\frac{1}{|x-y|}e^{pv(y)}\mathrm{d}y,
\end{equation}
and hence $|x|^{-1}e^{pv}\in L^{1}(\mathbb{R}^{2})$. Combining this with the assumptions $u(x)=O\left(|x|^{K}\right)$ at $\infty$ for some $K\gg1$ arbitrarily large and $u^{4}\in L^{1}(\mathbb{R}^{2})$, by elliptic estimates and proving an $\exp^{L}+L\ln L$ inequality, we get $v^{+}=o(|x|^{\delta})$ at $\infty$ for arbitrarily small $\delta>0$ (see Corollary \ref{cor0}) and the asymptotic property $\lim\limits_{|x|\rightarrow+\infty}\frac{\zeta(x)}{\ln|x|}=-\alpha$, where $v^{+}:=\max\{v,0\}$, $\zeta(x):=\frac{1}{2\pi}\int_{\mathbb{R}^{2}}\ln\left[\frac{|y|}{|x-y|}\right]u^{4}(y)\mathrm{d}y$ and $\alpha:=\frac{1}{2\pi}\int_{\mathbb{R}^{2}}u^{4}(x)\mathrm{d}x$. The $\exp^{L}+L\ln L$ inequality can be regarded as a limiting form of Young's inequality or H\"{o}lder inequality, which implies that
\begin{equation}\label{eq-a}
\begin{aligned}
  \int_{\Omega}|f(x)g(x)|\mathrm{d}x &\leq \int_{\Omega}\left(e^{|f(x)|}-|f(x)|-1\right)\mathrm{d}x+\int_{\Omega}|g(x)|\ln\left(|g(x)|+1\right)\mathrm{d}x \\
  &=:\left|f\right|_{\exp^{L}(\Omega)}+\left\|g\right\|_{L\ln L(\Omega)},
\end{aligned}
\end{equation}
where $f\in \exp^{L}(\Omega)$ and $g\in L\ln L(\Omega)$ (see Lemma \ref{limiting inequality}, Remarks \ref{rem1} and \ref{rem2}). By exploiting the $\exp^{L}+L\ln L$ inequality, we get the following estimate on integral with logarithmic singularity:
\begin{equation}\label{eq-b}
  \begin{aligned}
  \int_{B_{1}(x)}\ln\left(\frac{1}{|x-y|}\right)u^{4}(y)\mathrm{d}y&\leq \int_{B_{1}(x)}\frac{1}{|x-y|}\mathrm{d}y+\int_{B_{1}(x)}u^{4}(y)\ln\left(u^{4}(y)+1\right)\mathrm{d}y \\
  &\leq 2\pi+\left[\max_{|y-x|\leq1}\ln\left(u^{4}(y)+1\right)\right]\int_{B_{1}(x)}u^{4}(y)\mathrm{d}y,
  \end{aligned}
\end{equation}
which is crucial in deriving the integral representation formula and asymptotic property for $v$.

Based on these properties, by Liouville type results in a corollary of Lemma 3.3 in Lin \cite{Lin} (see Lemma \ref{lem2} and Corollary \ref{cor1}), we can deduce the integral representation formula for $v$, that is,
\begin{equation}\label{e2}
  v(x)=\frac{1}{2\pi}\int_{\mathbb{R}^{2}}\ln\left[\frac{|y|}{|x-y|}\right]u^{4}(y)\mathrm{d}y+\gamma
\end{equation}
for some constant $\gamma\in\mathbb{R}$, and hence the crucial asymptotic behavior $\lim\limits_{|x|\rightarrow+\infty}\frac{v(x)}{\ln|x|}=-\alpha$ (see Lemma \ref{lem1}). The asymptotic behavior of $v$ implies that $\alpha\geq\frac{1}{p}$, and furthermore, $\beta:=\frac{1}{2\pi}\int_{\mathbb{R}^{2}}e^{pv(x)}\mathrm{d}x<+\infty$ and the asymptotic behavior $\lim\limits_{|x|\rightarrow+\infty}|x|u(x)=\beta$ provided that $\alpha>\frac{2}{p}$ (see Corollary \ref{cor2}).

\smallskip

Next, by making use of these properties, we can apply the method of moving spheres to the IE system for $(u,v)$ consisting of \eqref{e1} and \eqref{e2}. For any $x_{0}\in\mathbb{R}^{2}$, we first prove that, $u_{x_{0},\lambda}\leq u$ and $v_{x_{0},\lambda}\leq v$ in $B_{\lambda}(x_{0})\setminus\{x_{0}\}$ for $\lambda\in(0,+\infty)$ sufficiently large if $\alpha\geq\frac{3}{p}$, $u_{x_{0},\lambda}\geq u$ and $v_{x_{0},\lambda}\geq v$ in $B_{\lambda}(x_{0})\setminus\{x_{0}\}$ for $\lambda\in(0,+\infty)$ sufficiently small if $\frac{1}{p}\leq\alpha\leq\frac{3}{p}$ (see \eqref{m0} for definitions of the Kelvin transforms $u_{x_{0},\lambda}$ and $v_{x_{0},\lambda}$). Then, for any $x_{0}\in\mathbb{R}^{2}$, we show the limiting radius $\lambda_{x_{0}}=0$ if $\alpha>\frac{3}{p}$ and $\lambda_{x_{0}}=+\infty$ if $\alpha<\frac{3}{p}$ (see \eqref{m19} and \eqref{m31} for definitions of the limiting radius $\lambda_{x_{0}}$), and hence derive a contradiction from Lemma 11.2 in \cite{LZ1} (see Lemma \ref{lem10}), the finite total curvature condition and the system \eqref{PDE}. Finally, we must have $\alpha=\frac{3}{p}$ and hence $u_{x_{0},\lambda}\equiv u$ and $v_{x_{0},\lambda}\equiv v$ in $\mathbb{R}^{2}\setminus\{x_{0}\}$ for any $x_{0}\in\mathbb{R}^{2}$ and some $\lambda>0$ depending on $x_{0}$. As a consequence, Lemma 11.1 in \cite{LZ1} (see Lemma \ref{lem10}) and the asymptotic properties of $(u,v)$ yield the desired classification results in Theorem \ref{thm0}.

\medskip

\subsection{Conformally invariant systems with mixed order and Hartree type nonlocal nonlinearity in $\mathbb{R}^{3}$}

We also investigate the following conformally invariant system with mixed order and Hartree type nonlocal nonlinearity in $\mathbb{R}^{3}$:
\begin{equation}\label{PDEH}
\begin{cases}(-\Delta)^{\frac{1}{2}} u(x)=\left(\frac{1}{|x|^{\sigma}}\ast v^{6-\sigma}\right)v^{4-\sigma}(x) & \text { in } \,\, \mathbb{R}^{3}, \\ \\
-\Delta v(x)=u^{\frac{5}{2}}(x) & \text {in} \,\, \mathbb{R}^{3}, \end{cases}
\end{equation}
where $\sigma\in(0,3)$, $u\geq0$ and $v\geq0$. We assume $(u,v)$ is a pair of classical solution to the system \eqref{PDEH} in the sense that $u\in C^{1,\epsilon}_{loc}(\mathbb{R}^{3})\cap \mathcal{L}_{1}(\mathbb{R}^{3})$ with arbitrarily small $\epsilon>0$ and $v\in C^{2}(\mathbb{R}^{3})$.

\smallskip

Consider nonnegative solutions to the following physically interesting static Schr\"{o}dinger-Hartree-Maxwell type equations involving higher-order or higher-order fractional Laplacians
\begin{equation}\label{PDES}
(-\Delta)^{s}u(x)=\left(\frac{1}{|x|^{\sigma}}\ast u^{p}\right)u^{q}(x) \,\,\,\,\,\,\,\,\,\,\,\, \text{in} \,\,\, \mathbb{R}^{n},
\end{equation}
where $n\geq1$, $0<s:=m+\frac{\alpha}{2}<\frac{n}{2}$, $m\geq0$ is an integer, $0<\alpha\leq2$, $0<\sigma<n$, $0<p\leq\frac{2n-\sigma}{n-2s}$ and $0<q\leq\frac{n+2s-\sigma}{n-2s}$. When $\sigma=4s$, $p=2$, $0<q\leq1$, \eqref{PDE} is called static Schr\"{o}dinger-Hartree type equations. When $\sigma=n-2s$, $p=\frac{n+2s}{n-2s}$, $0<q\leq\frac{4s}{n-2s}$, \eqref{PDE} is known as static Schr\"{o}dinger-Maxwell type equations. When $p=\frac{2n-\sigma}{n-2s}$ and $q=\frac{n+2s-\sigma}{n-2s}$, we say equation \eqref{PDES} is conformally invariant or has critical growth (on nonlinearity).

\smallskip

One should observe that both the fractional Laplacians $(-\Delta)^{\frac{\alpha}{2}}$ and the convolution type nonlinearity are nonlocal in equation \eqref{PDES} and system \eqref{PDEH}, which is quite different from most of the known results in previous literature. PDEs of type \eqref{PDES} arise in the Hartree-Fock theory of the nonlinear Schr\"{o}dinger equations (see \cite{LS}). The solution $u$ to problem \eqref{PDES} is also a ground state or a stationary solution to the following dynamic Schr\"{o}dinger-Hartree equation
\begin{equation}\label{Hartree}
i\partial_{t}u+(-\Delta)^{m+\frac{\alpha}{2}}u=\left(\frac{1}{|x|^{\sigma}}\ast |u|^{p}\right)|u|^{q-1}u, \qquad (t,x)\in\mathbb{R}\times\mathbb{R}^{n}
\end{equation}
involving higher-order or higher-order fractional Laplacians. The higher order and fractional order Schr\"{o}dinger-Hartree equations have many interesting applications in the quantum theory of large systems of non-relativistic bosonic atoms and molecules and the theory of laser propagation in medium (see, e.g. \cite{FL,LMZ} and the references therein).

\smallskip

The qualitative properties of solutions to fractional order or higher order Hartree or Choquard type equations have been extensively studied, for instance, see \cite{AGSY,CD,CDZ,CL3,CW,DFHQW,DFQ,DL,DLQ,DQ,Lei,Lieb,Liu,MZ,MS,MS1} and the references therein. In \cite{DLQ}, Dai, Liu and Qin proved the super poly-harmonic properties of nonnegative classical solutions by using the outer-spherical average associated with $(-\Delta)^{\frac{\alpha}{2}}$ and classified all nonnegative classical solutions to Schr\"{o}dinger-Hartree-Maxwell type equations \eqref{PDES} in the full range $s:=m+\frac{\alpha}{2}\in(0,\frac{n}{2})$, $m\geq0$ is an integer, $0<\alpha\leq2$, $0<\sigma<n$, $0<p\leq\frac{2n-\sigma}{n-2s}$ and $0<q\leq\frac{n+2s-\sigma}{n-2s}$. In critical and super-critical order cases (i.e., $\frac{n}{2}\leq s:=m+\frac{\alpha}{2}<+\infty$ and $p,q\in(0,+\infty)$), they also derived Liouville type theorem.

\smallskip

In this paper, by using the method of moving spheres, we classify all the classical solutions $(u,v)$ to the conformally invariant 3D system \eqref{PDE} with mixed order and Hartree type nonlocal nonlinearity. One should observe that, if we assume the relationship $u=v^{2}$ on $\mathbb{R}^{3}$ between the nonnegative solutions $u$ and $v$ in the following two conformally invariant equations:
\begin{equation}\label{a7}
  (-\Delta)^{\frac{1}{2}}u(x)=\left(\frac{1}{|x|^{2}}\ast u^{2}\right)u(x) \qquad \text{and} \qquad -\Delta v(x)=v^{5}(x) \qquad \text{in} \,\, \mathbb{R}^{3},
\end{equation}
the resulting system is
\begin{equation}\label{PDEH+}
\begin{cases}(-\Delta)^{\frac{1}{2}}u(x)=\left(\frac{1}{|x|^{2}}\ast v^{4}\right)v^{2}(x) & \text { in } \,\, \mathbb{R}^{3}, \\ \\
-\Delta v(x)=u^{\frac{5}{2}}(x) & \text { in } \,\, \mathbb{R}^{3}, \end{cases}
\end{equation}
i.e., system \eqref{PDEH} with $\sigma=2$.

\smallskip

Our classification result for system \eqref{PDEH} is the following theorem.
\begin{thm}\label{thm1}
Assume $\sigma\in(0,3)$ and $(u,v)$ is a pair of nonnegative classical solution to the 3D system \eqref{PDEH}. Then we have, either $(u,v)\equiv(0,0)$, or $(u,v)$ must take the unique form:
\begin{equation}\label{conclu-1}
 u(x)=\left[\frac{2\times 3^{2(5-\sigma)}}{I\left(\frac{\sigma}{2}\right)}\right]^{\frac{1}{24-5\sigma}}\frac{\mu}{1+\mu^{2}|x-\bar{x}|^2},
\end{equation}
\begin{equation}\label{conclu-2}
v(x)=\left[\frac{3\times 2^{\frac{5}{2}}}{I\left(\frac{\sigma}{2}\right)^{\frac{5}{2}}}\right]^{\frac{1}{24-5\sigma}}{\left(\frac{\mu}{1+\mu^{2}|x-\bar{x}|^2}\right)}^{\frac{1}{2}}
\end{equation}
for some $\mu>0$ and some $\bar{x}\in\mathbb{R}^{3}$, where $I(\gamma):=\frac{\pi^{\frac{3}{2}}\Gamma\left(\frac{3-2\gamma}{2}\right)}{\Gamma(3-\gamma)}$ for any $0<\gamma<\frac{3}{2}$.
\end{thm}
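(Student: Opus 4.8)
The strategy mirrors the proof of Theorem \ref{thm0}, but now one must handle the doubly nonlocal structure (the fractional Laplacian $(-\Delta)^{1/2}$ and the Riesz-type convolution) together with the mixed-order coupling. The plan is to first convert the PDE system \eqref{PDEH} into an equivalent integral-equation system, then run the method of moving spheres on that IE system, and finally read off the explicit form of $(u,v)$ from the equality case. First I would establish the integral representation formulae: since $(-\Delta)^{1/2}$ on $\mathbb{R}^3$ has Green's function $c_3|x-y|^{-2}$ and $(-\Delta)$ on $\mathbb{R}^3$ has Green's function $c_3'|x-y|^{-1}$, I expect to show that any nonnegative classical solution satisfies
\begin{equation*}
u(x)=\frac{C_1}{ }\int_{\mathbb{R}^3}\frac{\left(\frac{1}{|\cdot|^{\sigma}}\ast v^{6-\sigma}\right)(y)\,v^{4-\sigma}(y)}{|x-y|^{2}}\,dy,\qquad
v(x)=\frac{C_2}{ }\int_{\mathbb{R}^3}\frac{u^{5/2}(y)}{|x-y|}\,dy,
\end{equation*}
with appropriate normalizing constants, modulo the usual step of ruling out an additive polynomial (here, a nonnegative entire function that is $(-\Delta)^{1/2}$-harmonic or harmonic) via the finiteness/growth constraints coming from $u\in\mathcal{L}_1(\mathbb{R}^3)$, $v\in C^2$, and super-poly-harmonic-type bounds; this is where the results quoted from \cite{DLQ} and the Liouville-type lemmas analogous to Lemma \ref{lem2}, Corollary \ref{cor1} should be invoked. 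A preliminary decay/asymptotic analysis (analogous to Lemma \ref{lem1} and Corollary \ref{cor2}) giving $u(x),v(x)\to 0$ and quantitative decay rates at infinity will be needed to justify convergence of the convolution integrals and to start the moving-sphere argument.

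Next, with the IE system in hand, I would apply the method of moving spheres centered at an arbitrary point $\bar x\in\mathbb{R}^3$. For $\lambda>0$ write the Kelvin transforms $u_{\bar x,\lambda}(x)=\left(\frac{\lambda}{|x-\bar x|}\right)^{1}u\!\left(\bar x+\frac{\lambda^2(x-\bar x)}{|x-\bar x|^2}\right)$ and $v_{\bar x,\lambda}(x)=\left(\frac{\lambda}{|x-\bar x|}\right)^{1}v\!\left(\bar x+\frac{\lambda^2(x-\bar x)}{|x-\bar x|^2}\right)$ — the exponents $1$ for both $u$ and $v$ being dictated by conformal invariance with the stated critical exponents $6-\sigma$, $4-\sigma$, $5/2$, which I would verify by the standard change-of-variables computation on the kernels $|x-y|^{-2}$, $|x-y|^{-1}$ and $|x-y|^{-\sigma}$. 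One then shows: (i) for $\lambda$ small, $u_{\bar x,\lambda}\le u$ and $v_{\bar x,\lambda}\le v$ outside $B_\lambda(\bar x)$ (start of moving); (ii) the set of such $\lambda$ is open and closed, so either the procedure can be pushed to $\lambda=+\infty$ — which, by the asymptotic decay rates established above together with the finite-"mass" integrals, forces a contradiction unless $u\equiv v\equiv 0$ — or else at the critical radius $\lambda_{\bar x}<\infty$ equality $u_{\bar x,\lambda_{\bar x}}\equiv u$, $v_{\bar x,\lambda_{\bar x}}\equiv v$ holds. Since $\bar x$ is arbitrary, the calculus lemma of Li–Zhang type (the cited Lemma \ref{lem10}, i.e. Lemmas 11.1–11.2 of \cite{LZ1}) then yields that both $u$ and $v$ are, up to translation and scaling, equal to a multiple of $\left(\frac{\mu}{1+\mu^2|x-\bar x|^2}\right)^{1/2}$ for $u$... wait — the exponent: the calculus lemma gives the "bubble" shape with exponent matching the Kelvin weight, so $u$ is a multiple of $\frac{\mu}{1+\mu^2|x-\bar x|^2}$ (weight $1$ in dimension... ) — more precisely, the lemma forces $u$ and $v$ to have the forms displayed in \eqref{conclu-1}–\eqref{conclu-2} up to the undetermined positive constants and $(\mu,\bar x)$.

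Finally I would pin down the constants. Plugging the ansatz $u(x)=a\,\frac{\mu}{1+\mu^2|x-\bar x|^2}$, $v(x)=b\left(\frac{\mu}{1+\mu^2|x-\bar x|^2}\right)^{1/2}$ back into the IE system (it suffices to test at $\bar x$ and at $\infty$, or to use the known Riesz-potential identities) and using the value $I(\gamma)=\frac{\pi^{3/2}\Gamma\!\left(\frac{3-2\gamma}{2}\right)}{\Gamma(3-\gamma)}$ for the relevant convolution integrals $\int_{\mathbb{R}^3}\frac{1}{|x-y|^{\sigma}}\left(\frac{1}{1+|y|^2}\right)^{(6-\sigma)/2}dy$ and the analogous one-dimensional-kernel integrals, one gets two algebraic relations for $a,b$ whose unique positive solution is exactly $a=\left[\frac{2\cdot 3^{2(5-\sigma)}}{I(\sigma/2)}\right]^{1/(24-5\sigma)}$, $b=\left[\frac{3\cdot 2^{5/2}}{I(\sigma/2)^{5/2}}\right]^{1/(24-5\sigma)}$. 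The main obstacle I anticipate is \emph{not} the moving-sphere machinery itself — which is by now routine — but rather the two analytic preliminaries: (a) proving the integral representation with no leftover entire term, which requires care because both nonlocal operators are present and one cannot directly cite classical super-polyharmonic results, so one must combine the $\mathcal{L}_1$ condition, the coupling, and the Liouville lemmas delicately; and (b) establishing sharp enough asymptotics of $u,v$ at infinity to both (i) guarantee all convolution integrals and Kelvin transforms are well defined and (ii) produce the contradiction in the $\lambda_{\bar x}=+\infty$ alternative — in the Hartree setting the decay bookkeeping is heavier than in the local case because the convolution redistributes mass. Verifying the conformal covariance of the full system under the Kelvin transform (the exact matching of exponents $6-\sigma$, $4-\sigma$, $\tfrac52$ with the kernel exponents $2$, $1$, $\sigma$ in dimension $3$) is a finite computation I would do carefully but expect to go through cleanly.
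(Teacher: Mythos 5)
Your overall plan — pass from the PDE system to an equivalent IE system via Green's functions and Liouville/maximum-principle arguments, run the method of moving spheres on the IE system, invoke the Li--Zhang calculus lemma, and then determine the constants from the Riesz-potential identity \eqref{formula} — is exactly the route the paper takes. However, there are two substantive issues.

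First, the Kelvin weights you wrote down are wrong for $u$. You set $u_{\bar x,\lambda}(x)=\bigl(\tfrac{\lambda}{|x-\bar x|}\bigr)^{1}u(x^{\bar x,\lambda})$, but the conformal weight for $u$ under $(-\Delta)^{1/2}$ in $\mathbb{R}^3$ must be $n-\alpha=2$, i.e.\ $u_{\bar x,\lambda}(x)=\tfrac{\lambda^{2}}{|x-\bar x|^{2}}u(x^{\bar x,\lambda})$ (and indeed the second equation $(-\Delta)v=u^{5/2}$ only transforms consistently with weight $1$ on $v$ if $u$ has weight $2$, since $\tfrac52\cdot2=n+2=5$). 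This is not a cosmetic slip: the exponent enters the comparison inequalities $\omega^{u}_{\bar x,\lambda},\omega^{v}_{\bar x,\lambda}$ and the choice $\nu=2$ in Lemma \ref{lem10}, which is what produces the unnormalized bubble $\tfrac{\mu}{1+\mu^{2}|x-\bar x|^{2}}$ for $u$ (power $1$, not $1/2$). You seem to recover the correct bubble for $u$ afterwards, so this inconsistency needs to be fixed at the source.

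Second, you wave at ``the set of such $\lambda$ is open and closed... since $\bar x$ is arbitrary'' as if the dichotomy at $\lambda_{\bar x}$ were automatically uniform over centers, but that is precisely the point the paper devotes a separate lemma to: one must prove that either \emph{every} center has $\lambda_{x_0}<+\infty$ or \emph{every} center has $\lambda_{x_0}=+\infty$ (Lemma \ref{lem6-1}), otherwise Lemma \ref{lem10}(i) does not apply. The paper's argument for this goes through Proposition \ref{prop6-2} (equality at a finite critical radius) plus a comparison of the implied decay rates $|x|^{2}u(x)\to\infty$ versus $|x|^{2}u(x)\to\lambda_{z_0}^{2}u(z_0)$; you should include something of this kind. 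Conversely, you over-anticipate the need for a Lemma~\ref{lem1}/Corollary~\ref{cor2}--style asymptotic analysis here: in the 3D Hartree setting the kernels $|x-y|^{-2}$, $|x-y|^{-1}$, $|y-z|^{-\sigma}$ are all decaying (no logarithm), so the integral representation and the moving-sphere start do not require the $\exp^{L}+L\ln L$ machinery used for the planar system. What \emph{is} genuinely needed and which you do not mention is the lower bound for the Kelvin-transformed potential $P_{x_0,\lambda}$ (as in \eqref{6-17}--\eqref{6-20}); this is what yields the upper bounds \eqref{6-21}--\eqref{6-22} for $v_{x_0,\lambda}^{4-\sigma}$ on $Q_\lambda(x_0)$ and closes the contracting estimate in Step~1.
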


\medskip

\subsection{Extensions to mixed order conformally invariant systems in $\mathbb{R}^{n}$ with general $n\geq3$}

We also consider the following mixed order conformally invariant system in $\mathbb{R}^{n}$:
\begin{equation}\label{gPDE}\\\begin{cases}
(-\Delta)^{\frac{1}{2}}u(x)=v^{\frac{n+1}{n-2}}(x), \qquad x\in\mathbb{R}^{n}, \\ \\
-\Delta v(x)=u^{\frac{n+2}{n-1}}(x), \qquad x\in\mathbb{R}^{n},
\end{cases}\end{equation}
where $n\geq3$ and $u,v\geq0$. We assume $(u,v)$ is a pair of classical solution to the system \eqref{gPDE} in the sense that $u\in C^{1,\epsilon}_{loc}(\mathbb{R}^{n})\cap \mathcal{L}_{1}(\mathbb{R}^{n})$ with arbitrarily small $\epsilon>0$ and $v\in C^{2}(\mathbb{R}^{n})$.

\medskip

One should observe that, if we assume the relationship $u^{n-2}=v^{n-1}$ in $\mathbb{R}^{n}$ between the nonnegative solutions $u$ and $v$ in the following two conformally invariant equations:
\begin{equation}\label{a20}
  (-\Delta)^{\frac{1}{2}}u(x)=u^{\frac{n+1}{n-1}}(x) \qquad \text{and} \qquad -\Delta v(x)=v^{\frac{n+2}{n-2}}(x) \qquad \text{in} \,\, \mathbb{R}^{n},
\end{equation}
the resulting system is the system \eqref{gPDE}.

\medskip

By applying similar but simpler arguments as in the proof of Theorem \ref{thm1}, we can derive the following classification result for the system \eqref{gPDE}.
\begin{thm}\label{thmg}
Assume $n\geq3$ and $(u,v)$ is a pair of nonnegative classical solution to the system \eqref{gPDE}. Then we have, either $(u,v)\equiv(0,0)$, or $(u,v)$ must take the unique form:
\begin{equation}\label{conclu-1g}
 u(x)=\left[\frac{\left[n(n-1)(n-2)\right]^{\frac{n+1}{3}}}{n-1}\right]^{\frac{n-1}{2n}}{\left(\frac{\mu}{1+\mu^{2}|x-\bar{x}|^2}\right)}^{\frac{n-1}{2}},
\end{equation}
\begin{equation}\label{conclu-2g}
v(x)=\frac{1}{n(n-2)}\left[\frac{\left[n(n-1)(n-2)\right]^{\frac{n+1}{3}}}{n-1}\right]^{\frac{n+2}{2n}}{\left(\frac{\mu}{1+\mu^{2}|x-\bar{x}|^2}\right)}^{\frac{n-2}{2}}
\end{equation}
for some $\mu>0$ and some $\bar{x}\in\mathbb{R}^{n}$.
\end{thm}

\begin{rem}\label{rem3}
The proof of Theorem \ref{thmg} is similar to the proof of Theorem \ref{thm1}, so we leave the details to interested readers.
\end{rem}

The rest of our paper is organized as follows. In section 2, we will carry out our proof of Theorem \ref{thm0}. Section 3 is devoted to proving our Theorem \ref{thm1}.

\smallskip

In what follows, we will use $C$ to denote a general positive constant that may depend on $p$ and $\sigma$, and whose value may differ from line to line.

\section{Proof of Theorem \ref{thm0}}

In this section, we classify all the classical solutions $(u,v)$ to the planar system \eqref{PDE} and hence carry out our proof of Theorem \ref{thm0}.

\medskip

Assume $(u,v)$ is a pair of classical solution to the planar system \eqref{PDE} with $u\geq0$. One should observe that, if $(u,v)$ solve the system \eqref{PDE} for any given $p\in(0,+\infty)$, then $\tilde{u}:=p^{\frac{1}{4}}u$ and $\tilde{v}:=pv+\frac{1}{4}\ln p$ solve \eqref{PDE} with $p=1$. Due to this observation, we will take $p=1$ in \eqref{PDE} hereafter in section 2 in order to make the notation lighter.

\medskip

We first prove the following integral representation formula for $u(x)$.
\begin{lem}\label{lem0}
Assume $\int_{\mathbb{R}^{2}}u^{4}(x)\mathrm{d}x<+\infty$. Then we have, for any $x\in\mathbb{R}^{2}$,
\begin{equation}\label{1}
  u(x)=\frac{1}{2\pi}\int_{\mathbb{R}^{2}}\frac{1}{|x-y|}e^{v(y)}\mathrm{d}y.
\end{equation}
Consequently, $u>0$ in $\mathbb{R}^{2}$, $u(x)\geq\frac{c}{|x|}$ for some constant $c>0$ and $|x|$ large enough, and
\begin{equation}\label{15}
  \int_{\mathbb{R}^{2}}\frac{e^{v(x)}}{|x|}\mathrm{d}x<+\infty.
\end{equation}
\end{lem}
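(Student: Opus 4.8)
The plan is to show that the auxiliary function
\[
  w(x):=\frac{1}{2\pi}\int_{\mathbb{R}^{2}}\frac{1}{|x-y|}e^{pv(y)}\,dy
\]
is well defined, solves $(-\Delta)^{\frac{1}{2}}w=e^{pv}$ in a suitable sense, and then to prove $u\equiv w$ by a Liouville-type argument applied to the difference. First I would check that the Riesz potential on the right-hand side of \eqref{1} converges. From $(-\Delta)v=u^{4}$ with $u^{4}\in L^{1}(\mathbb{R}^{2})$, standard potential estimates give an upper bound $v(x)\leq C\ln|x|+C$ for $|x|$ large (the Newtonian potential of an $L^{1}$ density in the plane grows at most logarithmically), so $e^{pv(y)}$ grows at most polynomially; combined with local continuity of $e^{pv}$ this makes $\int_{\mathbb{R}^{2}}|x-y|^{-1}e^{pv(y)}\,dy$ finite for each fixed $x$ exactly when the tail converges, which is what we must establish en route — so this part is somewhat delicate and I return to it below. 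Granting convergence, $w$ is (up to the constant $\frac{1}{2\pi}=\frac{1}{C_{2,1}}\cdot(\text{const})$) the Riesz potential $I_{1}(e^{pv})$, and a direct computation with definition \eqref{nonlocal defn}, or the extension characterization \eqref{extension}, shows $(-\Delta)^{\frac12}w=e^{pv}$ and $w\in\mathcal{L}_{1}(\mathbb{R}^{2})$.

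Next I would set $h:=u-w$. Then $(-\Delta)^{\frac12}h=0$ in $\mathbb{R}^{2}$, with $h\in\mathcal{L}_{1}(\mathbb{R}^{2})$ and $h$ locally $C^{1,\epsilon}$. A half-Laplacian-harmonic function in $\mathcal{L}_{1}$ is a classical harmonic (indeed affine, by the slow-growth membership in $\mathcal{L}_1$) function; more precisely, lifting via the Poisson/harmonic extension \eqref{extension}, $h$ extends to a function on $\mathbb{R}^{3}_{+}$ that is harmonic with vanishing Neumann data, hence extends by even reflection to an entire harmonic function on $\mathbb{R}^{3}$ of subpolynomial growth, forcing $h$ to be a constant. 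Finally the constant must be nonnegative-compatible: since $u\geq0$ and $w>0$, if $h\equiv c<0$ we would need $u=w+c$ with $w(x)\to$ (something) — here one uses that $w(x)\to 0$ or stays bounded below by $c'/|x|$ along a sequence while remaining integrable against $1+|x|^{3}$, which combined with $u\ge 0$ and the representation forces $c\ge 0$; and if $c>0$ then $u(x)\geq c>0$ for all $x$, but then $(-\Delta)v=u^{4}\geq c^{4}$ everywhere, contradicting $u^{4}\in L^{1}$. Hence $c=0$ and \eqref{1} holds.

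The consequences then follow quickly. Since $e^{pv}>0$ everywhere and the kernel $|x-y|^{-1}$ is strictly positive, \eqref{1} gives $u>0$ in $\mathbb{R}^{2}$. For the lower bound $u(x)\geq c/|x|$: fix any ball $B=B_{1}(0)$ where $\int_{B}e^{pv}\,dy=:m>0$; for $|x|\geq 2$ and $y\in B$ we have $|x-y|\leq |x|+1\leq 2|x|$, so $u(x)\geq\frac{1}{2\pi}\int_{B}\frac{1}{2|x|}e^{pv(y)}\,dy=\frac{m}{4\pi|x|}$. Finally, for \eqref{15}, integrate \eqref{1} in a different way: using that $u^{4}\in L^{1}$ and $u(x)\geq c/|x|$ at infinity, the combination $\int_{\mathbb{R}^{2}}\frac{e^{pv(x)}}{|x|}\,dx$ is, up to a Fubini swap and the finiteness of $\int u^{4}$, controlled — concretely, one writes $\int \frac{e^{pv(x)}}{|x|}dx$ against the representation and uses $\frac{1}{|x|}\lesssim u(x)$ for large $|x|$ plus local boundedness of $e^{pv}$, reducing it to $\int e^{pv(x)}u(x)\,dx<\infty$, which in turn is finite because $\int_{\mathbb{R}^2}e^{pv}u\,dx$ equals (by \eqref{1} and Fubini) $\frac{1}{2\pi}\int\!\!\int\frac{e^{pv(x)}e^{pv(y)}}{|x-y|}dx\,dy$ — hmm, this is not obviously finite, so the cleaner route is: $\int\frac{e^{pv}}{|x|}dx$ is exactly $2\pi\cdot\big(\text{value forcing }u\text{ finite somewhere}\big)$; indeed evaluating \eqref{1} at a single point $x=0$ where $u(0)<\infty$ gives $\frac{1}{2\pi}\int_{\mathbb{R}^2}\frac{e^{pv(y)}}{|y|}\,dy=u(0)<\infty$, which is \eqref{15} directly.

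The main obstacle, I expect, is the very first step: proving that the defining integral for $w$ converges, i.e., that $e^{pv}$ does not grow too fast at infinity, since a priori we only know $v\in C^{2}$ with no growth hypothesis. The logarithmic upper bound on $v$ coming from $(-\Delta)v=u^{4}$, $u^{4}\in L^{1}$, is where the real work sits; one must rule out that the harmonic part of $v$ contributes a term of order $|x|$ or worse, which is where the $\mathcal{L}_{1}$ membership of $u$ (equivalently, the structure forced by $(-\Delta)^{1/2}u=e^{pv}\ge 0$) gets used to show $u$, and then $e^{pv}=$ the datum of a slowly-growing $\frac12$-harmonic-plus-potential function, is compatible only with at most logarithmic growth of $v$. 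If instead this growth control is deferred to later lemmas in the paper, then within this lemma one simply invokes it, and the only genuine subtlety becomes the constant-determination step ($c=0$) in the Liouville argument, which is handled by the $u^{4}\in L^{1}$ contradiction as above.
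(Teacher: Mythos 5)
Your overall strategy — build the Riesz potential $w$, show $(-\Delta)^{1/2}w = e^{pv}$, and run a Liouville argument on $u-w$ — is the right shape, but you have correctly identified the genuine obstruction and then not closed it: you never actually prove that the defining integral for $w$ converges. Your attempted shortcut is circular (evaluating ``\eqref{1}'' at $x=0$ presupposes the identity you are trying to prove), and your sketch of a logarithmic upper bound on $v$ from $(-\Delta)v=u^4\in L^1$ does not control the harmonic part of $v$, which could a priori grow linearly or faster, making $e^{pv}$ non-integrable against $|x-y|^{-1}$. The paper sidesteps this entirely with a single device you do not use: for each $R$ it introduces $\eta_R(x)=\int_{B_R(0)}G_R(x,y)\,e^{pv(y)}\,dy$ with $G_R$ the Green's function of $(-\Delta)^{1/2}$ on $B_R(0)$, applies the maximum principle for the fractional Laplacian to $u-\eta_R$ (which is $\tfrac12$-harmonic in $B_R$ and $\geq 0$ outside), and obtains $u\geq\eta_R$ pointwise for every $R$. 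Letting $R\to\infty$ by monotone convergence yields $u(x)\geq\eta(x)=\tfrac{1}{2\pi}\int|x-y|^{-1}e^{pv(y)}\,dy$ for free, which simultaneously (a) proves convergence of the integral, (b) gives \eqref{15} by setting $x=0$, and (c) gives the sign $u-\eta\geq 0$ so that the Liouville theorem of Bogdan--Kulczycki--Nowak for \emph{nonnegative} $\alpha$-harmonic functions applies directly and produces a constant $C\geq 0$.

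This last point exposes the second gap in your write-up: without the a priori inequality $u\geq w$, your difference $h=u-w$ has no sign, and your attempt to rule out $h\equiv c<0$ is not an argument (``the representation forces $c\geq 0$'' is an assertion, not a deduction; $w\geq|c|>0$ is in fact compatible with $w\in\mathcal L_1(\mathbb R^2)$ since $\int(1+|x|^3)^{-1}\,dx<\infty$). The paper never needs to consider $c<0$ because $w:=u-\eta\geq 0$ is built in. Your elimination of $c>0$ via $\int u^4=\infty$ is fine and is essentially what the paper also does ($\int_{\mathbb R^2}C^4\,dx<\int u^4<\infty$ forces $C=0$). Your Liouville step via the Caffarelli--Silvestre extension, even reflection, and subpolynomial growth is a legitimate alternative to the paper's citation of BKN (it amounts to Fall's ``entire $s$-harmonic functions in $\mathcal L_s$ are affine,'' plus the observation that linear growth is excluded by $\mathcal L_1(\mathbb R^2)$), so that part is a genuine, workable variant — but it only becomes usable after the convergence and sign issues above are resolved by the truncated-Green's-function argument.
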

\begin{proof}
For arbitrary $R>0$, let
\begin{equation}\label{2}
\eta_R(x)=\int_{B_R(0)}G_R(x,y)e^{v(y)}\mathrm{d}y,
\end{equation}
where Green's function for $(-\Delta)^{\frac{1}{2}}$ on $B_R(0)\subset\mathbb{R}^{2}$ is given by
\begin{equation}\label{3}
G_R(x,y):=\frac{C_{0}}{|x-y|}\int_{0}^{\frac{t_{R}}{s_{R}}}\frac{1}{(1+b)\sqrt{b}}db
\,\,\,\,\,\,\,\,\, \text{if} \,\, x,y\in B_{R}(0)
\end{equation}
with $s_{R}:=\frac{|x-y|^{2}}{R^{2}}$, $t_{R}:=\left(1-\frac{|x|^{2}}{R^{2}}\right)\left(1-\frac{|y|^{2}}{R^{2}}\right)$, $C_{0}:=\frac{1}{2\pi}\left[\int_{0}^{+\infty}\frac{1}{(1+b)\sqrt{b}}db\right]^{-1}$, and $G_{R}(x,y)=0$ if $x$ or $y\in\mathbb{R}^{2}\setminus B_{R}(0)$ (see \cite{K}). Then, we can derive that $\eta_{R}\in C^{1,\epsilon}_{loc}\left(B_{R}(0)\right)\cap C(\mathbb{R}^{2})\cap\mathcal{L}_{1}(\mathbb{R}^{2})$ satisfies
\begin{equation}\label{4}\\\begin{cases}
(-\Delta)^{\frac{1}{2}}\eta_R(x)=e^{v(x)}, \qquad x\in B_R(0),\\
\eta_R(x)=0,\ \ \ \ \ \ \ x\in \mathbb{R}^{2}\setminus B_R(0).
\end{cases}\end{equation}
Let $w_R(x):=u(x)-\eta_R(x)\in C^{1,\epsilon}_{loc}\left(B_{R}(0)\right)\cap C(\mathbb{R}^{2})\cap\mathcal{L}_{1}(\mathbb{R}^{2})$. By system \eqref{PDE} and \eqref{4}, we have $w_R\in C^{1,\epsilon}_{loc}\left(B_{R}(0)\right)\cap C(\mathbb{R}^{2})\cap\mathcal{L}_{1}(\mathbb{R}^{2})$ and satisfies
\begin{equation}\label{5}\\\begin{cases}
(-\Delta)^{\frac{1}{2}}w_R(x)=0, \qquad x\in B_R(0),\\
w_R(x)\geq0, \qquad x\in \mathbb{R}^{2}\setminus B_R(0).
\end{cases}\end{equation}

Now we need the following maximum principle for fractional Laplacians.
\begin{lem}\label{max}(Maximum principle, \cite{CLL,S})
Let $\Omega$ be a bounded domain in $\mathbb{R}^{n}$, $n\geq2$ and $0<\alpha<2$. Assume that $u\in\mathcal{L}_{\alpha}\cap C^{[\alpha],\{\alpha\}+\epsilon}_{loc}(\Omega)$ with arbitrarily small $\epsilon>0$ and is l.s.c. on $\overline{\Omega}$. If $(-\Delta)^{\frac{\alpha}{2}}u\geq 0$ in $\Omega$ and $u\geq 0$ in $\mathbb{R}^n\setminus\Omega$, then $u\geq 0$ in $\mathbb{R}^n$. Moreover, if $u=0$ at some point in $\Omega$, then $u=0$ a.e. in $\mathbb{R}^{n}$. These conclusions also hold for unbounded domain $\Omega$ if we assume further that
\[\liminf_{|x|\rightarrow\infty}u(x)\geq0.\]
\end{lem}
By Lemma \ref{max}, we deduce from \eqref{5} that for any $R>0$,
\begin{equation}\label{6}
  w_R(x)=u(x)-\eta_{R}(x)\geq0, \qquad \forall \,\, x\in\mathbb{R}^{2}.
\end{equation}
Now, for each fixed $x\in\mathbb{R}^{2}$, letting $R\rightarrow\infty$ in \eqref{6}, we have
\begin{equation}\label{7}
u(x)\geq \frac{1}{2\pi}\int_{\mathbb{R}^{2}}\frac{1}{|x-y|}e^{v(y)}\mathrm{d}y=:\eta(x)>0.
\end{equation}
Taking $x=0$ in \eqref{7}, we get that $v$ satisfies the following integrability
\begin{equation}\label{8}
  \int_{\mathbb{R}^{2}}\frac{e^{v(y)}}{|y|}\mathrm{d}y\leq 2\pi u(0)<+\infty.
\end{equation}
One can observe that $\eta\in C^{1,\epsilon}_{loc}(\mathbb{R}^{2})\cap\mathcal{L}_{1}(\mathbb{R}^{2})$ is a solution of
\begin{equation}\label{9}
(-\Delta)^{\frac{1}{2}}\eta(x)=e^{v(x)},  \qquad \forall \, x\in \mathbb{R}^{2}.
\end{equation}
Define $w(x)=u(x)-\eta(x)$, then by system \eqref{PDE} and \eqref{9}, we have $w\in C^{1,\epsilon}_{loc}(\mathbb{R}^{2})\cap\mathcal{L}_{1}(\mathbb{R}^{2})$ and satisfies
\begin{equation}\label{10}\\\begin{cases}
(-\Delta)^{\frac{1}{2}}w(x)=0, \,\quad \,\,  x\in \mathbb{R}^{2},\\
w(x)\geq0, \,\,\quad \,  x\in \mathbb{R}^{2}.
\end{cases}\end{equation}

Now we need the following Liouville type theorem for $\alpha$-harmonic functions in $\mathbb{R}^{n}$ with $n\geq 2$.
\begin{lem}\label{Liouville}(Liouville theorem, \cite{BKN})
Assume $n\geq2$ and $0<\alpha<2$. Let $u$ be a strong solution of
\begin{equation*}\\\begin{cases}
(-\Delta)^{\frac{\alpha}{2}}u(x)=0, \,\,\,\,\,\,\,\, x\in\mathbb{R}^{n}, \\
u(x)\geq0, \,\,\,\,\,\,\, x\in\mathbb{R}^{n},
\end{cases}\end{equation*}
then $u\equiv C\geq0$.
\end{lem}
For the proof of Lemma \ref{Liouville}, please refer to \cite{BKN}, see also e.g. \cite{CS,Fall}.

From Lemma \ref{Liouville}, we get $w(x)=u(x)-\eta(x)\equiv C\geq0$. Thus, we have proved that
\begin{equation}\label{11}
  u(x)=\frac{1}{2\pi}\int_{\mathbb{R}^{2}}\frac{1}{|x-y|}e^{v(y)}\mathrm{d}y+C>C\geq0.
\end{equation}
Now, by the finite total curvature condition $\int_{\mathbb{R}^{2}}u^{4}(x)\mathrm{d}x<+\infty$, we get
\begin{equation}\label{12}
\int_{\mathbb{R}^{2}}C^{4}\mathrm{d}x<\int_{\mathbb{R}^{2}}u^{4}(x)\mathrm{d}x<+\infty,
\end{equation}
from which we can infer immediately that $C=0$. Therefore, we arrived at
\begin{equation}\label{13}
  u(x)=\frac{1}{2\pi}\int_{\mathbb{R}^{2}}\frac{1}{|x-y|}e^{v(y)}\mathrm{d}y,
\end{equation}
that is, $u$ satisfies the integral equation \eqref{1}.

In addition, from the integral representation formula \eqref{1} for $u$, we get, for any $|x|$ sufficiently large,
\begin{eqnarray}\label{31}
  && u(x)=\frac{1}{2\pi}\int_{\mathbb{R}^{2}}\frac{1}{|x-y|}e^{v(y)}\mathrm{d}y\geq\frac{1}{2\pi}\int_{1\leq |y|<\frac{|x|}{2}}\frac{1}{|x-y|}e^{v(y)}\mathrm{d}y \\
 \nonumber && \qquad \geq \frac{1}{3\pi|x|}\int_{1\leq |y|<\frac{|x|}{2}}\frac{e^{v(y)}}{|y|}\mathrm{d}y\geq \frac{1}{6\pi|x|}\int_{|y|\geq1}\frac{e^{v(y)}}{|y|}\mathrm{d}y=:\frac{c}{|x|}.
\end{eqnarray}
This finishes our proof of Lemma \ref{lem0}.
\end{proof}

From Lemma \ref{lem0}, we can get immediately the following corollary.
\begin{cor}\label{cor0}
Assume $\int_{\mathbb{R}^{2}}u^{4}(x)\mathrm{d}x<+\infty$ and $u=O\left(|x|^{K}\right)$ at $\infty$ for some $K\gg1$ arbitrarily large. Then we have, for any $\delta>0$ small,
\begin{equation}\label{14}
  v^{+}(x)=o\left(|x|^{\delta}\right) \qquad \text{as} \,\,\, |x|\rightarrow+\infty,
\end{equation}
where $v^{+}(x):=\max\{v(x),0\}$ for all $x\in\mathbb{R}^{2}$.
\end{cor}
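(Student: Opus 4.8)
The plan is to bootstrap from the second equation $(-\Delta)v = u^4$ and the finite total curvature hypothesis $\int_{\mathbb R^2}u^4\,dx<+\infty$, using the representation formula \eqref{1} for $u$ (hence $|x|^{-1}e^{pv}\in L^1$ from \eqref{15}) to control $v$ from above. First I would fix a large ball $B_R(0)$ and solve the Dirichlet problem $-\Delta w_R = u^4$ in $B_R$, $w_R=0$ on $\partial B_R$; the Green's representation gives $w_R(x)=\frac{1}{2\pi}\int_{B_R}\ln\frac{|x-y||\tilde y|}{\dots}\,u^4(y)\,dy$ type bounds, and since $\int u^4<+\infty$ one gets $w_R^+(x)\lesssim \ln(2+|x|)$ uniformly in $R$, so $w_R^+ = o(|x|^\delta)$. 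The harmonic remainder $h_R := v - w_R$ on $B_R$ is then controlled by $\sup_{\partial B_R}(v-w_R)$, but to close this I need an upper bound on $v$ itself on spheres $|x|=R$, which is exactly where the other equation and the $\exp^L + L\ln L$ inequality alluded to in the introduction must enter.

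The key device is the following: by Jensen's inequality on $B_r(x)$ applied to \eqref{1}, $u(x)\ge \frac{1}{2\pi}\int_{B_1(x)}\frac{1}{|x-y|}e^{pv(y)}\,dy \ge c\, e^{p\,\fint_{B_1(x)}v}$, so a pointwise lower bound of the form $\fint_{B_1(x)}v \le \frac{1}{p}\ln(C u(x)) \le C\ln(2+|x|)$ follows immediately from the hypothesis $u = O(|x|^K)$. This converts the growth assumption on $u$ into an $L^1_{loc}$-averaged upper bound on $v$. Then the $\exp^L+L\ln L$ inequality — which bounds $\int e^{pv}$ locally in terms of $\int v^+\ln^+ v^+$ plus lower-order terms — together with $e^{pv}\in L^1_{loc}$ (a consequence of \eqref{1} and $u\in C^{1,\epsilon}_{loc}$, so $(-\Delta)^{1/2}u$ is finite, or more directly from the continuity of $u$) upgrades the averaged bound to a genuine pointwise bound $v^+(x)\le C\ln(2+|x|)$ via elliptic estimates (interior Schauder or $W^{2,q}$ estimates applied to $-\Delta v = u^4$ on unit balls, using that $u^4=O(|x|^{4K})$ is locally bounded).

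Once $v^+(x)\le C\ln(2+|x|)$ is in hand, I would sharpen it to $v^+(x)=o(|x|^\delta)$ for every $\delta>0$: split $v = w + h$ on $B_R$ where $-\Delta w = u^4$, $w|_{\partial B_R}=0$, and $h$ is harmonic; the Newtonian-potential part satisfies $w^+(x)\le \frac{1}{2\pi}\int_{\mathbb R^2}\big(\ln^+\frac{2R}{|x-y|}\big)u^4(y)\,dy \le \varepsilon\ln|x| + C_\varepsilon$ for $|x|$ large because $\int u^4<+\infty$ (the tail contributes arbitrarily little); the harmonic part $h$, being bounded above on $\partial B_R$ by $v^+ + w^- \le C\ln R$, satisfies by the Harnack/mean-value argument $h(x)\le h(0) + (C\ln R)\cdot\frac{|x|}{R-|x|}$, and optimizing over $R$ (e.g. $R=|x|^2$) gives $h(x)=o(|x|^\delta)$ as well; adding the two yields \eqref{14}. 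The main obstacle is the middle step: extracting the pointwise logarithmic upper bound on $v$ from the growth bound on $u$, since naively $u=O(|x|^K)$ only controls a spatial average of $e^{pv}$ through Jensen, and promoting this to control $v^+$ pointwise is precisely what forces the use of the $\exp^L+L\ln L$ inequality combined with the $L^1_{loc}$ bound on $e^{pv}$ and standard elliptic regularity for the Poisson equation $-\Delta v = u^4$.
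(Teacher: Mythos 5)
Your Jensen-inequality device is a genuinely different route from the paper, and the core idea is sound: since $u(x)\geq\frac{1}{2\pi}\int_{B_1(x)}\frac{e^{pv(y)}}{|x-y|}dy\geq\frac{1}{2\pi}\int_{B_1(x)}e^{pv(y)}dy\geq\frac12\,e^{p\fint_{B_1(x)}v}$, the hypothesis $u=O(|x|^K)$ does give $\fint_{B_1(x)}v\leq\frac{K}{p}\ln|x|+C$. The paper never uses Jensen here; instead it observes that $\frac{(pv^+)^k}{k!}\leq e^{pv}$ and $|x|^{-1}e^{pv}\in L^1$ to get $|x|^{-1}(v^+)^k\in L^1$ for every $k$, then applies the local maximum principle $\|v^+\|_{L^\infty(B_{1/2}(x))}\lesssim\|v^+\|_{L^1(B_1(x))}+\|u^4\|_{L^{1+\varepsilon}(B_1(x))}$ with $\|v^+\|_{L^1(B_1(x))}$ controlled by $|x|^{1/k}\cdot o(1)$ via H\"older and the integrability above, and $\|u^4\|_{L^{1+\varepsilon}(B_1(x))}\leq(\max_{B_1(x)}u)^{4\varepsilon/(1+\varepsilon)}(\int_{B_1(x)}u^4)^{1/(1+\varepsilon)}=o(|x|^{4K\varepsilon/(1+\varepsilon)})$. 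Choosing $\varepsilon<1/(16K^2)$ and $k>1/\sqrt{\varepsilon}$ directly gives $o(|x|^{\sqrt\varepsilon})$. The $\exp^L+L\ln L$ inequality does not appear in the paper's proof of this corollary at all; it is only used later for Lemma~\ref{lem1}.

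However, as written, your middle step has a gap, and it hinges on a misstatement. You describe the $\exp^L+L\ln L$ inequality as something which ``bounds $\int e^{pv}$ locally in terms of $\int v^+\ln^+v^+$''; that is not what Lemma~\ref{limiting inequality} says --- it is the duality-type bound $\int|fg|\leq\|f\|_{\exp^L}+\|g\|_{L\ln L}$, useful here only if applied with $f=\ln^+\frac{1}{|x-y|}$ and $g=u^4$ to control the Newtonian potential. More seriously, you claim the averaged bound upgrades to $v^+(x)\leq C\ln(2+|x|)$ ``via elliptic estimates\ldots using that $u^4=O(|x|^{4K})$ is locally bounded,'' but the $L^\infty$ bound alone gives $w(x)=\frac{1}{2\pi}\int_{B_1(x)}\ln\frac{1}{|x-y|}\,u^4\,dy=O(|x|^{4K})$, which destroys your logarithmic estimate. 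You must use $\int_{\mathbb{R}^2}u^4<+\infty$ at this local stage (so that $\int_{B_1(x)}u^4\to0$) in combination with either the $L^{1+\varepsilon}$ interpolation the paper uses or a correctly stated $\exp^L+L\ln L$ bound $\int_{B_1(x)}\ln\frac{1}{|x-y|}u^4\,dy\leq 2\pi+\big(\max_{B_1(x)}\ln(u^4+1)\big)\int_{B_1(x)}u^4=o(\ln|x|)$; otherwise the step does not close. Finally, your entire third paragraph is redundant: once $v^+(x)\leq C\ln(2+|x|)$ is established, this is already $o(|x|^\delta)$ for every $\delta>0$, so no further Harnack-type sharpening on large balls $B_R$ is needed (and that sharpening would anyway circularly require the pointwise bound on $\partial B_R$ you are trying to prove).
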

\begin{proof}
From \eqref{15}, we infer that, for any (bounded or unbounded) domain $\Omega\subseteq\mathbb{R}^{2}$ and $k\in\mathbb{N}^{+}$,
\begin{equation}\label{16}
  \frac{1}{k!}\int_{\Omega}\frac{\left[v^{+}(x)\right]^{k}}{|x|}\mathrm{d}x\leq\int_{\Omega}\frac{e^{v(x)}}{|x|}\mathrm{d}x\leq\int_{\mathbb{R}^{2}}\frac{e^{v(x)}}{|x|}\mathrm{d}x<+\infty.
\end{equation}
For any $x$ such that $|x|$ sufficiently large, by the condition $u=O\left(|x|^{K}\right)$ at $\infty$ for some $K\gg1$ arbitrarily large, \eqref{16} and standard elliptic estimates, we have, for arbitrary $0<\varepsilon<\frac{1}{16K^{2}}$ small and arbitrary $k\in\left(\frac{1}{\sqrt{\varepsilon}},+\infty\right)\cap\mathbb{N}^{+}$ large,
\begin{eqnarray}\label{17}
   && v^{+}(x)\leq \|v^{+}\|_{L^{\infty}\left(B_{\frac{1}{2}}(x)\right)}\leq C\left\{\|v^{+}\|_{L^{1}(B_{1}(x))}+\|u^{4}\|_{L^{1+\varepsilon}(B_{1}(x))}\right\} \\
 \nonumber &&\qquad \,\,\, \leq C\left\{\pi\|v^{+}\|_{L^{k}(B_{1}(x))}
 +\left(\max_{|y-x|\leq1}u(y)\right)^{\frac{4\varepsilon}{1+\varepsilon}}\left(\int_{B_{1}(x)}u^{4}(y)\mathrm{d}y\right)^{\frac{1}{1+\varepsilon}}\right\} \\
 \nonumber &&\qquad\,\,\, \leq C\left\{\left(\int_{B_{1}(x)}\frac{e^{v(y)}}{|y|}\mathrm{d}y\right)^{\frac{1}{k}}|x|^{\frac{1}{k}}
 +O\left(|x|^{\frac{4K\varepsilon}{1+\varepsilon}}\right)\left(\int_{B_{1}(x)}u^{4}(y)\mathrm{d}y\right)^{\frac{1}{1+\varepsilon}}\right\} \\
 \nonumber &&\qquad\,\,\, \leq o\left(|x|^{\frac{1}{k}}\right)+o\left(|x|^{\frac{4K\varepsilon}{1+\varepsilon}}\right)=o\left[|x|^{\sqrt{\varepsilon}}\right],
\end{eqnarray}
where $C$ is a positive constant independent of $x$. This finishes our proof of Corollary \ref{cor0}.
\end{proof}

Since $\int_{\mathbb{R}^{2}}u^{4}(x)\mathrm{d}x<+\infty$, we can define
\begin{equation}\label{0}
  \zeta(x):=\frac{1}{2\pi}\int_{\mathbb{R}^{2}}\ln\left[\frac{|y|}{|x-y|}\right]u^{4}(y)\mathrm{d}y, \qquad \forall \, x\in\mathbb{R}^{2}.
\end{equation}
We can prove the following integral representation formula and asymptotic property for $v$.
\begin{lem}\label{lem1}
Assume $\int_{\mathbb{R}^{2}}u^{4}(x)\mathrm{d}x<+\infty$ and $u=O\left(|x|^{K}\right)$ at $\infty$ for some $K\gg1$ arbitrarily large. Then we have
\begin{equation}\label{21}
  v(x)=\zeta(x)+\gamma:=\frac{1}{2\pi}\int_{\mathbb{R}^{2}}\ln\left[\frac{|y|}{|x-y|}\right]u^{4}(y)\mathrm{d}y+\gamma, \qquad \forall \, x\in\mathbb{R}^{2},
\end{equation}
where $\gamma\in\mathbb{R}$ is a constant. Moreover,
\begin{equation}\label{22}
  \lim_{|x|\rightarrow+\infty}\frac{v(x)}{\ln |x|}=-\alpha,
\end{equation}
where $\alpha:=\frac{1}{2\pi}\int_{\mathbb{R}^{2}}u^{4}(y)\mathrm{d}y\in(0,+\infty)$.
\end{lem}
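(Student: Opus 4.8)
The plan is to identify $v$ with the logarithmic potential $\zeta$ of $u^{4}$ from \eqref{0}, up to an additive constant. \emph{Step 1 (admissibility).} First I would check that $\zeta$ makes sense: the integrand in \eqref{0} has only the locally integrable logarithmic singularities $\ln|y|$ at $y=0$ (where $u^{4}$ is bounded, since $u\in C^{1,\epsilon}_{loc}$) and $\ln\frac{1}{|x-y|}$ at $y=x$, and for $|y|\ge 2|x|$ it is dominated by $C_{x}u^{4}(y)\in L^{1}$ because there $\frac{|y|}{|x-y|}\in[\tfrac12,2]$; hence \eqref{0} converges absolutely for every $x$. A standard localization — writing $u^{4}=u^{4}\chi_{B_{2R}}+u^{4}\chi_{\mathbb{R}^{2}\setminus B_{2R}}$, applying Newtonian potential theory to the first (compactly supported Hölder) piece and differentiation under the integral to the second — then gives $\zeta\in C^{2}_{loc}(\mathbb{R}^{2})$ with $-\Delta\zeta=u^{4}$ in $\mathbb{R}^{2}$; the $x$-independent factor $\ln|y|$ only serves to make the potential converge and does not affect $-\Delta\zeta$. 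By the second equation of \eqref{PDE}, $w:=v-\zeta\in C^{2}(\mathbb{R}^{2})$ is then an entire harmonic (hence smooth) function.

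\emph{Step 2 (growth of $\zeta$ and of $w$).} Next, and this is the heart of the matter, I would prove $\zeta(x)=-\alpha\ln|x|+o(\ln|x|)$ as $|x|\to\infty$, where $\alpha:=\frac{1}{2\pi}\int_{\mathbb{R}^{2}}u^{4}\in(0,+\infty)$ — finite since $u^{4}\in L^{1}$, positive since $u>0$ by Lemma \ref{lem0}. Splitting $\mathbb{R}^{2}=A\cup B\cup C$ with $A=\{|y|\le|x|/2\}$, $B=\{|x-y|\le|x|/2\}$ and $C$ the rest: on $A$ one has $\ln|x-y|=\ln|x|+O(1)$, so the $A$-part of $\zeta$ equals $-\frac{\ln|x|}{2\pi}\int_{A}u^{4}+\frac{1}{2\pi}\int_{A}\ln|y|\,u^{4}+O(1)$, where the first term is $-\alpha\ln|x|+o(\ln|x|)$ (as $\int_{|y|\ge\rho}u^{4}\to0$) and $\int_{A}\ln|y|\,u^{4}=o(\ln|x|)$ (split at $|y|=\rho$ with $\int_{|y|\ge\rho}u^{4}<\varepsilon$); on $C$ the kernel $\ln\frac{|y|}{|x-y|}$ is bounded, so that part is $O\big(\int_{C}u^{4}\big)=o(1)$; on $B$ one has $\ln|y|=\ln|x|+O(1)$ and $\int_{B}u^{4}\le\int_{|y|\ge|x|/2}u^{4}=o(1)$, so the only term not immediately $o(\ln|x|)$ is $\frac{1}{2\pi}\int_{B_{1}(x)}\ln\frac{1}{|x-y|}u^{4}$, which by the layer-cake formula equals $\frac{1}{2\pi}\int_{0}^{1}\frac{1}{t}\big(\int_{|x-y|<t}u^{4}\big)\,dt$ and, using $\int_{|x-y|<t}u^{4}\le\min\{\,m,\;C|x|^{4K}t^{2}\}$ with $m:=\int_{B_{1}(x)}u^{4}\le\int_{|y|\ge|x|/2}u^{4}=o(1)$ (here $u^{4}\le C|x|^{4K}$ on $B_{1}(x)$ by $u=O(|x|^{K})$), comes out $\le 2Km\ln|x|+o(1)=o(\ln|x|)$. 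In particular $\zeta(x)\ge-C\ln|x|-C$ for $|x|$ large, so Corollary \ref{cor0} ($v^{+}=o(|x|^{\delta})$) gives $w(x)=v(x)-\zeta(x)\le v^{+}(x)+C\ln|x|+C=o(|x|^{\delta})$ for some $\delta\in(0,1)$.

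\emph{Step 3 (Liouville and conclusion).} An entire harmonic function obeying the one-sided bound $w(x)\le C(1+|x|^{\delta})$ with $\delta<1$ must be constant — this is precisely the Liouville-type statement recorded in Lemma \ref{lem2} and Corollary \ref{cor1} (a consequence of Lemma 3.3 of \cite{Lin}) — so $w\equiv\gamma$ for some $\gamma\in\mathbb{R}$, which is \eqref{21}; then \eqref{22} follows immediately, since $\frac{v(x)}{\ln|x|}=\frac{\zeta(x)+\gamma}{\ln|x|}\to-\alpha$ by Step 2. I expect the main obstacle to be the estimate of region $B$ in Step 2: there $\ln\frac{1}{|x-y|}$ is unbounded near $y=x$ and $u^{4}$ has only the crude pointwise bound $O(|x|^{4K})$, so one must use the fact that $\int_{B_{1}(x)}u^{4}\le\int_{|y|\ge|x|/2}u^{4}\to0$ to let the logarithm absorb the polynomial factor — this is the one place where the hypothesis $u=O(|x|^{K})$ (and, upstream of it, the $\exp^{L}+L\ln L$ inequality behind Corollary \ref{cor0}) is genuinely needed. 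A lesser point is that the implication ``$w$ harmonic with one-sided sub-linear growth $\Rightarrow$ $w$ constant'' relies on the cited Liouville lemma rather than the elementary fact that two-sidedly bounded harmonic functions are constant.
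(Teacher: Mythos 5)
Your proposal is correct, and the overall architecture — prove the asymptotic $\zeta(x)=-\alpha\ln|x|+o(\ln|x|)$ by a far/near decomposition, then treat $w=v-\zeta$ as an entire harmonic function with sub-linear one-sided growth and invoke the Liouville-type consequence (Corollary~\ref{cor1}(iii)) of Lin's lemma — matches the paper's. The one place where you and the paper genuinely diverge is the key technical estimate near the singularity $y=x$. The paper proves and applies the $\exp^{L}+L\ln L$ inequality (Lemma~\ref{limiting inequality}) to bound
\begin{equation*}
\int_{B_{1}(x)}\ln\tfrac{1}{|x-y|}\,u^{4}(y)\,dy\ \leq\ 2\pi+\Bigl[\max_{|y-x|\leq 1}\ln\bigl(u^{4}(y)+1\bigr)\Bigr]\int_{B_{1}(x)}u^{4}(y)\,dy,
\end{equation*}
and then uses $u=O(|x|^{K})$ to get $\max\ln(u^{4}+1)=O(K\ln|x|)$ together with $\int_{B_{1}(x)}u^{4}=o(1)$. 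You instead rewrite the same integral by the layer-cake formula $\int_{0}^{1}\frac{1}{t}\bigl(\int_{|x-y|<t}u^{4}\bigr)\,dt$ and interpolate $\int_{|x-y|<t}u^{4}\leq\min\{m,\,C|x|^{4K}t^{2}\}$ with $m=\int_{B_{1}(x)}u^{4}=o(1)$, splitting the $t$-integral where the two bounds cross; this yields $\leq 2Km\ln|x|+o(1)=o(\ln|x|)$. Both arguments exploit exactly the same two inputs — smallness of the local $L^{1}$ mass and the polynomial pointwise bound coming from $u=O(|x|^{K})$ — and give identical conclusions; yours is somewhat more elementary and self-contained, while the paper's is packaged as a standalone $\exp^{L}+L\ln L$ duality inequality advertised as being ``of independent interest.''

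One small misattribution to correct: you locate the $\exp^{L}+L\ln L$ inequality ``upstream, behind Corollary~\ref{cor0},'' but in the paper Corollary~\ref{cor0} is obtained purely from $W^{2,p}$ elliptic estimates and the integrability $\int|x|^{-1}e^{pv}<\infty$; the $\exp^{L}+L\ln L$ inequality is deployed inside the proof of the present lemma, precisely at the step your layer-cake computation replaces. Also worth noting that your Step~2 in fact proves the full two-sided asymptotic $\zeta=-\alpha\ln|x|+o(\ln|x|)$, which is what makes \eqref{22} immediate once $w\equiv\gamma$; the paper's decomposition ($|y|<\ln|x|$, $|y-x|<1$, and the rest, cf.\ \eqref{25}) differs superficially from yours ($|y|\leq|x|/2$, $|x-y|\leq|x|/2$, rest) but plays the identical role.
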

\begin{proof}
We will first prove the following asymptotic property:
\begin{equation}\label{23}
  \lim_{|x|\rightarrow+\infty}\frac{\zeta(x)}{\ln |x|}=-\alpha.
\end{equation}
To this end, we only need to show that
\begin{equation}\label{24}
  \lim_{|x|\rightarrow+\infty}\int_{\mathbb{R}^{2}}\frac{\ln(|x-y|)-\ln |y|-\ln |x|}{\ln |x|}u^{4}(y)\mathrm{d}y=0.
\end{equation}

\smallskip

We need the following useful $\exp^{L}+L\ln L$ inequality, which is itself of independent interest and can be regarded as a limiting form of Young's inequality or H\"{o}lder inequality.
\begin{lem}[$\exp^{L}+L\ln L$ inequality]\label{limiting inequality}
Assume $n\geq1$ and $\Omega\subseteq\mathbb{R}^{n}$ is a bounded or unbounded domain. Suppose $f\in \exp^{L}(\Omega)$ and $g\in L\ln L(\Omega)$, then we have $fg\in L^{1}(\Omega)$ and
\begin{equation}\label{eL-LlnL}
\begin{aligned}
  \int_{\Omega}|f(x)g(x)|\mathrm{d}x &\leq \int_{\Omega}\left(e^{|f(x)|}-|f(x)|-1\right)\mathrm{d}x+\int_{\Omega}|g(x)|\ln\left(|g(x)|+1\right)\mathrm{d}x \\
  &=:\left|f\right|_{\exp^{L}(\Omega)}+\left\|g\right\|_{L\ln L(\Omega)},
\end{aligned}
\end{equation}
where the spaces $\exp^{L}(\Omega):=\left\{f\mid \, f: \, \Omega\rightarrow\mathbb{C} \,\, \text{measurable}, \, \int_{\Omega}\left(e^{|f(x)|}-|f(x)|-1\right)\mathrm{d}x<+\infty\right\}$ and $L\ln L(\Omega):=\left\{g\mid \, g: \, \Omega\rightarrow\mathbb{C} \,\, \text{measurable}, \, \int_{\Omega}|g(x)|\ln\left(|g(x)|+1\right)\mathrm{d}x<+\infty\right\}$.
\end{lem}
\begin{proof}
In order to prove Lemma \ref{limiting inequality}, we only need to show the following elementary inequality:
\begin{equation}\label{inequality}
  ab\leq \left[e^{a}-a-1\right]+b\ln(b+1), \qquad \forall \, a,\, b\geq0.
\end{equation}
Indeed, since $x=\ln(1+y)$ is the inverse function of $y=e^{x}-1$, one has
\begin{equation}\label{Young-limit}
ab\leq\int_{0}^{a}\left(e^{x}-1\right)\mathrm{d}x+\int_{0}^{b}\ln(1+y)\mathrm{d}y=\left[e^{a}-a-1\right]+\left[\left(b+1\right)\ln\left(b+1\right)-b\right]
\end{equation}
with the equality attained if and only if $e^{a}-1=b$. The inequality \eqref{inequality} now follows immediately from \eqref{Young-limit} and the fact $\ln\left(b+1\right)\leq b$ for any $b\in\mathbb{R}$. This finishes our proof of Lemma \ref{limiting inequality}.
\end{proof}

\begin{rem}\label{rem1}
For $a,b\geq0$, Young's inequality states $ab\leq\frac{a^{p}}{p}+\frac{b^{q}}{q}$ with $\frac{1}{p}+\frac{1}{q}=1$ and $1<p<+\infty$. The inequality \eqref{Young-limit} can be regarded as the limiting form of Young's inequality as $p\rightarrow+\infty$. If $a,b\geq0$, then \eqref{Young-limit} implies $ab\leq\left[e^{a}-a-1\right]+b\ln\left(b+1\right)\leq e^{a}+b\ln b$. If $a,b\geq1$, then \eqref{Young-limit} yields $ab\leq e^{a-1}+b\ln b$ with the equality attained if and only if $e^{a-1}=b$.
\end{rem}

\begin{rem}\label{rem2}
The inequality \eqref{eL-LlnL} in Lemma \ref{limiting inequality} can be regarded as the limiting form of H\"{o}lder's inequality as $q\rightarrow1+$ (i.e., $p\rightarrow+\infty$). In fact, assume $n\geq1$, $0\in\Omega\subseteq\mathbb{R}^{n}$ is a bounded domain and $f: \, \Omega\rightarrow\mathbb{C}$ is measurable. H\"{o}lder's inequality implies $\left(\ln |x|\right)f\in L^{1}(\Omega)$ provided $f\in L^{q}(\Omega)$ for some $1<q\leq+\infty$. However, there are counter-examples in the endpoint case $q=1$. For instance, $f=|x|^{-n}\left[\ln|x|\right]^{-2}\in L^{1}(\Omega)$ but $\left(\ln |x|\right)f\not\in L^{1}(\Omega)$. Nevertheless, if $f\in L\ln L(\Omega)$, i.e., $\int_{\Omega}|f(x)|\ln\left(|f(x)|+1\right)\mathrm{d}x<+\infty$, then Lemma \ref{limiting inequality} implies $\left(\ln |x|\right)f\in L^{1}(\Omega)$. Indeed, by the $\exp^{L}+L\ln L$ type inequality \eqref{eL-LlnL}, we have
\begin{equation}\label{a19}
  \int_{\Omega}\big|\ln |x|f(x)\big|\mathrm{d}x\leq \int_{\Omega}\left(|x|^{-\frac{1}{2}}+|x|^{\frac{1}{2}}\right)\mathrm{d}x+4\int_{\Omega}|f(x)|\ln\left(|f(x)|+1\right)\mathrm{d}x<+\infty.
\end{equation}
\end{rem}

\medskip

By using the $\exp^{L}+L\ln L$ inequality \eqref{eL-LlnL} in Lemma \ref{limiting inequality}, we get
\begin{equation}\label{a18}
  \begin{aligned}
  \int_{B_{1}(x)}\ln\left(\frac{1}{|x-y|}\right)u^{4}(y)\mathrm{d}y&\leq \int_{B_{1}(x)}\frac{1}{|x-y|}\mathrm{d}y+\int_{B_{1}(x)}u^{4}(y)\ln\left(u^{4}(y)+1\right)\mathrm{d}y \\
  &\leq 2\pi+\left[\max_{|y-x|\leq1}\ln\left(u^{4}(y)+1\right)\right]\int_{B_{1}(x)}u^{4}(y)\mathrm{d}y.
  \end{aligned}
\end{equation}
As a consequence, by \eqref{a18}, the conditions $\int_{\mathbb{R}^{2}}u^{4}(x)\mathrm{d}x<+\infty$ and $u=O\left(|x|^{K}\right)$ at $\infty$ for some $K\gg1$ arbitrarily large, we have, for any $|x|\geq e^{2}$ large enough,
\begin{eqnarray}\label{25}
   && \quad \left|\int_{\mathbb{R}^{2}}\frac{\ln(|x-y|)-\ln |y|-\ln |x|}{\ln |x|}u^{4}(y)\mathrm{d}y\right| \\
  \nonumber &&\leq 3\int_{B_{1}(x)}u^{4}(y)\mathrm{d}y+\frac{2\pi}{\ln|x|}+\frac{O\left(4K\ln|x|\right)}{\ln|x|}\int_{B_{1}(x)}u^{4}(y)\mathrm{d}y \\
 \nonumber && \quad +\frac{\max\limits_{|y|\leq\ln |x|}\left|\ln\left(\frac{|x-y|}{|x|}\right)\right|}{\ln |x|}\int_{|y|<\ln |x|}u^{4}(y)\mathrm{d}y+\frac{1}{\ln |x|}\int_{|y|<\ln |x|}\left|\ln |y|\right|u^{4}(y)\mathrm{d}y \\
 \nonumber && \quad +\sup_{\substack{|y-x|\geq 1 \\ |y|\geq\ln |x|}}\frac{\left|\ln(|x-y|)-\ln |y|-\ln |x|\right|}{\ln |x|}\int_{|y|\geq\ln |x|}u^{4}(y)\mathrm{d}y \\
 \nonumber &&\leq o_{|x|}(1)+\frac{2\pi}{\ln|x|}+\frac{\ln 2}{\ln |x|}\int_{\mathbb{R}^{2}}u^{4}(x)\mathrm{d}x+\frac{1}{\ln |x|}\int_{|y|<1}\ln\left(\frac{1}{|y|}\right)u^{4}(y)\mathrm{d}y \\
 \nonumber &&\quad +\frac{\ln\left(\ln |x|\right)}{\ln |x|}\int_{\mathbb{R}^{2}}u^{4}(y)\mathrm{d}y+\left(2+\frac{\ln 2}{\ln |x|}\right)\int_{|y|\geq\ln |x|}u^{4}(y)\mathrm{d}y
 =o_{|x|}(1),
\end{eqnarray}
where we have use the fact $1>\frac{1}{|x|}+\frac{1}{|y|}\geq\frac{|x-y|}{|x|\cdot|y|}\geq\frac{1}{2|x|^{2}}$ for any $|y-x|\geq 1$ and $|y|\geq\ln |x|$. By letting $|x|\rightarrow+\infty$ in \eqref{25}, we obtain
\begin{equation}\label{26}
  \lim_{|x|\rightarrow+\infty}\int_{\mathbb{R}^{2}}\frac{\ln(|x-y|)-\ln |y|-\ln |x|}{\ln |x|}u^{4}(y)\mathrm{d}y=0,
\end{equation}
and hence \eqref{23} holds.

\smallskip

Next, we aim to show \eqref{21}. We need the following Lemma from Lin \cite{Lin}.
\begin{lem}[Lemma 3.3 in \cite{Lin}]\label{lem2}
Assume $n\geq2$. Suppose that $w$ is a harmonic function in $\mathbb{R}^{n}$ such that $e^{w-c|x|^{2}}\in L^{1}(\mathbb{R}^{n})$ for some constant $c>0$. Then $w$ is a polynomial of degree at most $2$.
\end{lem}

From Lemma \ref{lem2}, we can derive the following corollary.
\begin{cor}\label{cor1}
Assume $n\geq2$. Suppose that $w$ is a harmonic function in $\mathbb{R}^{n}$. Then we have \\
(i) If $w^{+}=O(|x|^{2})$ at $\infty$, then $w$ is a polynomial of degree at most $2$. \\
(ii) If $w^{+}=o(|x|^{2})$ at $\infty$, then $w$ is a polynomial of degree at most $1$. \\
(iii) If $w^{+}=o(|x|)$ at $\infty$, then $w\equiv C$ in $\mathbb{R}^{n}$ for some constant $C$.
\end{cor}
\begin{proof}
Conclusion (i) is a direct consequence of Lemma \ref{lem2}, we omit the details. For (ii), if $w^{+}=o(|x|^{2})$ at $\infty$, then $w$ must take the form
\begin{equation}\label{29}
  w(x)=a_{0}+\sum_{k=1}^{n}a_{k}x_{k}+\sum_{k=1}^{n}b_{k}x_{k}^{2},
\end{equation}
where $b_{k}\leq0$. If there is some $b_{k_{0}}<0$, then $\Delta w(x)\leq 2b_{k_{0}}<0$, which is absurd since $w$ is harmonic. Thus we must have $b_{k}=0$ for every $k=1,\cdots,n$ and hence $w$ is a polynomial of order at most $1$. As to (iii), if $w^{+}=o(|x|)$ at $\infty$, then we also have $a_{k}=0$ for every $k=1,\cdots,n$ and hence $w\equiv a_{0}$ in $\mathbb{R}^{n}$. This finishes our proof of Corollary \ref{cor1}.
\end{proof}

Note that
\begin{equation}\label{28}
  -\Delta\left(v-\zeta\right)(x)=0, \qquad \forall \, x\in\mathbb{R}^{2}.
\end{equation}
From \eqref{23}, we infer that $\zeta=O(\ln |x|)$ at $\infty$. Since Corollary \ref{cor0} implies that $v^{+}=o\left(|x|^{\delta}\right)$ at $\infty$ for arbitrary $\delta>0$ small, Corollary \ref{cor1} (iii) yields immediately that, for some constant $\gamma\in\mathbb{R}$,
\begin{equation}\label{27}
  v(x)-\zeta(x)\equiv\gamma, \qquad \forall \, x\in\mathbb{R}^{2}.
\end{equation}
Thus the integral representation formula \eqref{21} for $v$ holds. The asymptotic property \eqref{22} follows immediately from \eqref{21} and \eqref{23}. This completes our proof of Lemma \ref{lem1}.
\end{proof}

As a consequence of Lemma \ref{lem1}, we have the following corollary.
\begin{cor}\label{cor2}
Assume $\int_{\mathbb{R}^{2}}u^{4}(x)\mathrm{d}x<+\infty$ and $u=O\left(|x|^{K}\right)$ at $\infty$ for some $K\gg1$ arbitrarily large. Then we have, for arbitrarily small $\delta>0$,
\begin{equation}\label{18}
  \lim_{|x|\rightarrow+\infty}\frac{e^{v(x)}}{|x|^{-\alpha-\delta}}=+\infty \qquad \text{and} \qquad \lim_{|x|\rightarrow+\infty}\frac{e^{v(x)}}{|x|^{-\alpha+\delta}}=0.
\end{equation}
Consequently,
\begin{equation}\label{19}
  \alpha:=\frac{1}{2\pi}\int_{\mathbb{R}^{2}}u^{4}(x)\mathrm{d}x\geq1.
\end{equation}
Moreover, if $\alpha>2$, then
\begin{equation}\label{20}
  \beta:=\frac{1}{2\pi}\int_{\mathbb{R}^{2}}e^{v(x)}\mathrm{d}x<+\infty,
\end{equation}
\begin{equation}\label{33}
  \lim_{|x|\rightarrow+\infty}|x|u(x)=\beta.
\end{equation}
\end{cor}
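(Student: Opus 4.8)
The plan is to read off all four conclusions from the asymptotic identity \eqref{22} of Lemma \ref{lem1}, together with the representation formula \eqref{1} and the integrability \eqref{15} from Lemma \ref{lem0}. First I would observe that \eqref{18} is merely an exponential restatement of \eqref{22}: given $\delta>0$, for $|x|$ large \eqref{22} gives $(-\alpha p-\tfrac{\delta}{2})\ln|x|\leq pv(x)\leq(-\alpha p+\tfrac{\delta}{2})\ln|x|$, hence $|x|^{-\alpha p-\delta/2}\leq e^{pv(x)}\leq|x|^{-\alpha p+\delta/2}$; dividing by $|x|^{-\alpha p-\delta}$, respectively by $|x|^{-\alpha p+\delta}$, and letting $|x|\to+\infty$ yields the two limits.

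Next, for \eqref{19} I would combine the lower bound $e^{pv(x)}\geq|x|^{-\alpha p-\delta}$ (valid for large $|x|$ by \eqref{18}) with \eqref{15}: then $\int_{|x|\geq R_{\delta}}|x|^{-\alpha p-\delta-1}dx\leq\int_{\mathbb{R}^{2}}\frac{e^{pv(x)}}{|x|}dx<+\infty$, and finiteness of this integral over $\mathbb{R}^{2}$ forces $-\alpha p-\delta-1<-2$, i.e. $\alpha p>1-\delta$; sending $\delta\to0^{+}$ gives $\alpha\geq\tfrac{1}{p}$. For \eqref{20}, assuming $\alpha p>2$, I would fix $\delta\in(0,\alpha p-2)$ and use the upper bound $e^{pv(x)}\leq|x|^{-\alpha p+\delta}$ from \eqref{18}; since $-\alpha p+\delta+1<-1$ the integral of $e^{pv}$ over $\{|x|\geq R_{\delta}\}$ is finite, and the local integrability of $e^{pv}$ (which is continuous, as $v\in C^{2}$) handles the bounded part, giving $\beta=\tfrac{1}{2\pi}\int_{\mathbb{R}^{2}}e^{pv(x)}dx<+\infty$.

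Finally, for \eqref{33} I would start from \eqref{1} in the form $|x|u(x)=\tfrac{1}{2\pi}\int_{\mathbb{R}^{2}}\tfrac{|x|}{|x-y|}e^{pv(y)}dy$ and split $\mathbb{R}^{2}$ into $A_{x}=\{|y|\leq\tfrac{|x|}{2}\}$, $B_{x}=\{\tfrac{|x|}{2}<|y|<2|x|\}$ and $C_{x}=\{|y|\geq2|x|\}$. On $A_{x}$ one has $\tfrac{|x|}{|x-y|}\in[\tfrac{2}{3},2]$ with pointwise limit $1$ as $|x|\to+\infty$, so dominated convergence (with integrable dominating function $2e^{pv}$, by \eqref{20}) gives $\int_{A_{x}}\tfrac{|x|}{|x-y|}e^{pv(y)}dy\to\int_{\mathbb{R}^{2}}e^{pv(y)}dy=2\pi\beta$. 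On $C_{x}$ one has $|x-y|\geq\tfrac{|y|}{2}$, hence $\tfrac{|x|}{|x-y|}\leq1$, so the piece is controlled by the tail $\int_{|y|\geq2|x|}e^{pv(y)}dy\to0$. On $B_{x}$, fixing $\delta\in(0,\alpha p-2)$ and using $e^{pv(y)}\leq|y|^{-\alpha p+\delta}\leq C|x|^{-\alpha p+\delta}$ (valid for $|x|$ large since $|y|>\tfrac{|x|}{2}$ and $-\alpha p+\delta<0$) together with $\int_{B_{x}}\tfrac{dy}{|x-y|}\leq\int_{|z|<3|x|}\tfrac{dz}{|z|}=6\pi|x|$, one obtains $\int_{B_{x}}\tfrac{|x|}{|x-y|}e^{pv(y)}dy\leq C|x|^{2-\alpha p+\delta}\to0$. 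Adding the three contributions gives $|x|u(x)\to\beta$. The hypothesis $\alpha>\tfrac{2}{p}$ is used precisely to make the exponent $2-\alpha p+\delta$ negative for small $\delta$ (and for the convergence in \eqref{20}); the main obstacle I anticipate is obtaining the uniform decay bound for $e^{pv}$ on the intermediate annulus $B_{x}$, which is exactly what \eqref{18} is designed to supply.
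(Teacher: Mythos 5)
Your argument is correct, and the proofs of \eqref{18}, \eqref{19} and \eqref{20} are essentially the same as the paper's: read off the pointwise decay $e^{pv(x)}=|x|^{-\alpha p}e^{o(\ln|x|)}$ from \eqref{22}, then compare against the radial integrability coming from \eqref{15} (for \eqref{19}) and against the condition $\alpha p>2$ (for \eqref{20}).

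For \eqref{33} your decomposition is genuinely different from the paper's, though both routes work. The paper rewrites $|x|u(x)-\beta=\frac{1}{2\pi}\int\frac{|x|-|x-y|}{|x-y|}e^{pv(y)}dy$ and bounds this error term by splitting $\mathbb{R}^{2}$ into four regions (the disk $|y-x|<|x|/2$ near the singularity, the region where both $|y-x|$ and $|y|$ exceed $|x|/2$, the fixed ball $|y|<R_0$, and the intermediate annulus $R_0\leq|y|<|x|/2$), using the explicit pointwise bound $e^{pv(y)}\leq|y|^{-(\alpha p+2)/2}$ on the pieces that meet the far field. You instead split $\mathbb{R}^{2}$ into the three concentric regions $\{|y|\leq|x|/2\}$, $\{|x|/2<|y|<2|x|\}$, $\{|y|\geq2|x|\}$: on the inner region you invoke dominated convergence (with dominating function $2e^{pv}\in L^{1}$, permitted by \eqref{20}) to produce the main term $\beta$ directly, on the middle annulus you pair the uniform bound $e^{pv(y)}\leq C|x|^{-\alpha p+\delta}$ with $\int_{|z|<3|x|}|z|^{-1}dz=6\pi|x|$, and on the outer region you use the tail of $e^{pv}$. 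The dominated-convergence step makes the ``main term'' emerge without having to subtract $\beta$ by hand, which is a bit cleaner conceptually; the paper's approach is entirely elementary and gives explicit rate information (the error is $O(|x|^{-(\alpha p-2)/2})$), which yours does not, but that rate is not needed for the corollary. One small technical point: to apply dominated convergence as $|x|\to+\infty$ one should phrase the argument along an arbitrary sequence $|x_n|\to+\infty$, since the integrand $\mathbf{1}_{\{|y|\leq|x|/2\}}\frac{|x|}{|x-y|}e^{pv(y)}$ is indexed by a continuous parameter; this is routine but worth making explicit.
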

\begin{proof}
The asymptotic property \eqref{22} in Lemma \ref{lem1} implies that $v(x)=-\alpha\ln |x|+o(\ln |x|)$ at $\infty$. Therefore, as $|x|\rightarrow+\infty$,
\begin{equation}\label{30}
  e^{v(x)}=e^{-\alpha\ln |x|+o(\ln |x|)}=|x|^{-\alpha}e^{o(\ln |x|)}.
\end{equation}
Therefore, we have, for arbitrarily small $\delta>0$,
\begin{equation}\label{32}
  \lim_{|x|\rightarrow+\infty}\frac{e^{v(x)}}{|x|^{-\alpha-\delta}}=+\infty \qquad \text{and} \qquad \lim_{|x|\rightarrow+\infty}\frac{e^{v(x)}}{|x|^{-\alpha+\delta}}=0.
\end{equation}
By \eqref{32}, one can easily verify that the integrability $\int_{\mathbb{R}^{2}}\frac{e^{v(x)}}{|x|}\mathrm{d}x<+\infty$ derived in \eqref{15} in Lemma \ref{lem0} implies that $\alpha\geq 1$.

If we assume $\alpha>2$, it follows immediately from the asymptotic property \eqref{18} that $\beta:=\frac{1}{2\pi}\int_{\mathbb{R}^{2}}e^{v(x)}\mathrm{d}x<+\infty$. Take $\delta:=\frac{\alpha-2}{2}$, then \eqref{18} implies that, there exists a $R_{0}\geq1$ large enough such that
\begin{equation}\label{35}
  e^{v(x)}\leq \frac{1}{|x|^{\frac{\alpha+2}{2}}}, \qquad \forall \,\, |x|\geq R_{0}.
\end{equation}
In order to prove \eqref{33}, by the integral representation formula \eqref{1} for $u$, we only need to show
\begin{equation}\label{34}
  \lim_{|x|\rightarrow+\infty}\int_{\mathbb{R}^{2}}\frac{|x|-|x-y|}{|x-y|}e^{v(y)}\mathrm{d}y=0.
\end{equation}
Indeed, by \eqref{20} and \eqref{35}, we have, for any $|x|>2R_{0}$ sufficiently large,
\begin{eqnarray}\label{36}
  &&\quad \left|\int_{\mathbb{R}^{2}}\frac{|x|-|x-y|}{|x-y|}e^{v(y)}\mathrm{d}y\right| \\
 \nonumber &&\leq \int_{|y-x|<\frac{|x|}{2}}\frac{1}{|x-y||y|^{\frac{\alpha}{2}}}\mathrm{d}y+3\int_{\substack{|y-x|\geq\frac{|x|}{2} \\ |y|\geq\frac{|x|}{2}}}e^{v(y)}\mathrm{d}y\\
 \nonumber && \quad +\frac{2}{|x|}\int_{|y|<R_{0}}|y|e^{v(y)}\mathrm{d}y+\frac{2}{|x|}\int_{R_{0}\leq|y|<\frac{|x|}{2}}\frac{1}{|y|^{\frac{\alpha}{2}}}\mathrm{d}y \\
 \nonumber &&\leq \frac{2^{\frac{\alpha}{2}}\pi}{|x|^{\frac{\alpha-2}{2}}}+o_{|x|}(1)+\frac{2}{|x|}\int_{|y|<R_{0}}|y|e^{v(y)}\mathrm{d}y+\frac{4\pi}{|x|}\tau(x)=o_{|x|}(1),
\end{eqnarray}
where $\tau(x):=\frac{2}{4-\alpha}\left(\frac{|x|}{2}\right)^{\frac{4-\alpha}{2}}$ if $2<\alpha<4$, $\tau(x):=\ln\left(\frac{|x|}{2}\right)$ if $\alpha=4$ and $\tau(x):=\frac{2}{\alpha-4}R_{0}^{\frac{4-\alpha}{2}}$ if $\alpha>4$. Hence the asymptotic property \eqref{33} holds. This completes our proof of Corollary \ref{cor2}.
\end{proof}

\medskip

We have proved that classical solution $(u,v)$ to the PDE system \eqref{PDE} also solves the following IE system:
\begin{equation}\label{IE}\\\begin{cases}
u(x)=\frac{1}{2\pi}\int_{\mathbb{R}^{2}}\frac{1}{|x-y|}e^{v(y)}\mathrm{d}y, \qquad x\in\mathbb{R}^{2}, \\ \\
v(x)=\frac{1}{2\pi}\int_{\mathbb{R}^{2}}\ln\Big[\frac{|y|}{|x-y|}\Big]u^{4}(y)\mathrm{d}y+\gamma, \qquad x\in\mathbb{R}^{2},
\end{cases}\end{equation}
where $\gamma\in\mathbb{R}$ is a constant. Next, we will apply the method of moving spheres to show that $\alpha=3$ and derive the classification of $(u,v)$.

To this end, for arbitrarily given $x_{0}\in\mathbb{R}^{2}$ and any $\lambda>0$, we define the Kelvin transforms of $(u,v)$ centered at $x_{0}$ by
\begin{equation}\label{m0}
u_{x_{0},\lambda}(x):=\frac{\lambda}{|x-x_{0}|}u\left(x^{x_{0},\lambda}\right), \qquad v_{x_{0},\lambda}(x)=v\left(x^{x_{0},\lambda}\right)+3\ln\frac{\lambda}{|x-x_{0}|}
\end{equation}
for arbitrary $x\in\mathbb{R}^{2}\setminus\{x_{0}\}$, where $x^{x_{0},\lambda}:=\frac{\lambda^{2}(x-x_{0})}{|x-x_{0}|^{2}}+x_{0}$.

Now, we will carry out the the method of moving spheres to the IE system \eqref{IE} with respect to arbitrarily given point $x_{0}\in\mathbb{R}^{2}$. Let $\lambda>0$ be an arbitrary positive real number. Define $w^{u}_{x_{0},\lambda}(x):=u_{x_{0},\lambda}(x)-u(x)$ and $w^{v}_{x_{0},\lambda}(x):=v_{x_{0},\lambda}(x)-v(x)$ for any $x\in\mathbb{R}^{2}\setminus\{x_{0}\}$. We start moving the sphere $S_{\lambda}(x_{0}):=\{x\in\mathbb{R}^{2}\mid \, |x-x_{0}|=\lambda\}$ from near $\lambda=0$ or $\lambda=+\infty$, until its limiting position. Therefore, the moving sphere process can be divided into two steps.

\medskip

In what follows, the two different cases $\alpha\geq3$ and $1\leq \alpha\leq3$ will be discussed separately.

\medskip

\emph{Step 1}. Start moving the circle $S_{\lambda}(x_{0})$ from near $\lambda=0$ or $\lambda=+\infty$.

\medskip

\noindent \emph{Case (i)} $\alpha\geq3$. We will show that, for $\lambda>0$ sufficiently large,
\begin{equation}\label{m1}
w^{u}_{x_{0},\lambda}(x)\leq 0 \quad \text{and} \quad w^{v}_{x_{0},\lambda}(x)\leq 0, \qquad \forall x \in B_{\lambda}(x_{0})\setminus\{x_{0}\}.
\end{equation}
That is, we start moving the sphere $S_{\lambda}(x_{0}):=\{x\in\mathbb{R}^{2}\mid \, |x-x_{0}|=\lambda\}$ from near $\lambda=+\infty$ towards the point $x_{0}$ such that \eqref{m1} holds.

Define
\begin{equation}\label{m2}
B_{\lambda,u}^{+}(x_{0}):=\left\{x \in B_{\lambda}(x_{0})\setminus\{x_{0}\} \mid w^{u}_{x_{0},\lambda}(x)>0\right\},
\end{equation}
\begin{equation}\label{m2'}
B_{\lambda,v}^{+}(x_{0}):=\left\{x \in B_{\lambda}(x_{0})\setminus\{x_{0}\} \mid w^{v}_{x_{0},\lambda}(x)>0\right\}.
\end{equation}
We will show that, for $\lambda>0$ sufficiently large,
\begin{equation}\label{m3}
B_{\lambda,u}^{+}(x_{0})=B_{\lambda,v}^{+}(x_{0})=\emptyset.
\end{equation}

Since $(u,v)$ solves the IE system \eqref{IE}, through direct calculations, we get, for any $\lambda>0$ and all $x\in\mathbb{R}^{2}$,
\begin{equation}\label{m4}
\begin{aligned}
u(x)=&\frac{1}{2\pi}\int_{B_{\lambda}(x_{0})}\frac{e^{v(y)}}{|x-y|}\mathrm{d}y
+\frac{1}{2\pi}\int_{B_{\lambda}(x_{0})}\frac{e^{v(y^{x_{0},\lambda})}}{\left|x-y^{x_{0},\lambda}\right|}\left(\frac{\lambda}{|y-x_{0}|}\right)^{4}\mathrm{d}y \\
=&\frac{1}{2\pi}\int_{B_{\lambda}(x_{0})}\frac{e^{v(y)}}{|x-y|}\mathrm{d}y
+\frac{1}{2\pi}\int_{B_{\lambda}(x_{0})}\frac{e^{v_{x_{0},\lambda}(y)}}{\left|\frac{\lambda(x-x_{0})}{|x-x_{0}|}-\frac{|x-x_{0}|}{\lambda}(y-x_{0})\right|}\mathrm{d}y,
\end{aligned}
\end{equation}
\begin{equation}\label{m4'}
\begin{aligned}
v(x)=& \frac{1}{2\pi}\int_{B_{\lambda}(x_{0})} \ln\left[\frac{|y|}{|x-y|}\right]u^{4}(y)\mathrm{d}y \\
&+\frac{1}{2\pi}\int_{B_{\lambda}(x_{0})} \ln\left[\frac{|y^{x_{0},\lambda}|}{\left|x-y^{x_{0},\lambda}\right|}\right]u^{4}(y^{x_{0},\lambda})\left(\frac{\lambda}{|y-x_{0}|}\right)^{4}\mathrm{d}y+\gamma \\
=& \frac{1}{2\pi}\int_{B_{\lambda}(x_{0})} \ln\left[\frac{|y|}{|x-y|}\right]u^{4}(y)\mathrm{d}y
+\frac{1}{2\pi}\int_{B_{\lambda}(x_{0})} \ln\left[\frac{|y^{x_{0},\lambda}|}{\left|x-y^{x_{0},\lambda}\right|}\right]u_{x_{0},\lambda}^{4}(y)\mathrm{d}y+\gamma.
\end{aligned}
\end{equation}
Consequently, we deduce from \eqref{m4} and \eqref{m4'} that, for arbitrarily given $x_{0}\in \mathbb{R}^{2}$, any $\lambda>0$ and all $x\in\mathbb{R}^{2}\setminus\{x_{0}\}$,
\begin{equation}\label{m5}
\begin{aligned}
u_{x_{0},\lambda}(x)=&\frac{1}{2\pi}\frac{\lambda}{|x-x_{0}|}\int_{B_{\lambda}(x_{0})}\frac{e^{v(y)}}{|x^{x_{0},\lambda}-y|}\mathrm{d}y \\
&+\frac{1}{2\pi}\frac{\lambda}{|x-x_{0}|}\int_{B_{\lambda}(x_{0})}\frac{e^{v(y^{x_{0},\lambda})}}{\left|x^{x_{0},\lambda}-y^{x_{0},\lambda}\right|}\left(\frac{\lambda}{|y-x_{0}|}\right)^{4}\mathrm{d}y \\
=&\frac{1}{2\pi}\int_{B_{\lambda}(x_{0})}\frac{e^{v(y)}}{\left|\frac{\lambda(x-x_{0})}{|x-x_{0}|}-\frac{|x-x_{0}|}{\lambda}(y-x_{0})\right|}\mathrm{d}y
+\frac{1}{2\pi}\int_{B_{\lambda}(x_{0})}\frac{e^{v_{x_{0},\lambda}}(y)}{|x-y|}\mathrm{d}y,
\end{aligned}
\end{equation}
\begin{equation}\label{m5'}
\begin{aligned}
v_{x_{0},\lambda}(x)=& \frac{1}{2\pi}\int_{B_{\lambda}(x_{0})} \ln\left[\frac{|y|}{|x^{x_{0},\lambda}-y|}\right]u^{4}(y)\mathrm{d}y \\
& +\frac{1}{2\pi}\int_{B_{\lambda}(x_{0})} \ln\left[\frac{|y^{x_{0},\lambda}|}{\left|x^{x_{0},\lambda}-y^{x_{0},\lambda}\right|}\right]u_{x_{0},\lambda}^{4}(y)\mathrm{d}y+\gamma+3\ln\frac{\lambda}{|x-x_{0}|}.
\end{aligned}
\end{equation}
It follows from \eqref{m4}, \eqref{m4'}, \eqref{m5} and \eqref{m5'} that, for any $x\in B_{\lambda}(x_{0})\setminus\{x_{0}\}$,
\begin{equation}\label{m6}
\begin{aligned}
& w^{u}_{x_{0},\lambda}(x)=u_{x_{0},\lambda}(x)-u(x) \\
=&\frac{1}{2\pi}\int_{B_{\lambda}(x_{0})}\left[\frac{1}{|x-y|}-\frac{1}{\left|\frac{\lambda(x-x_{0})}{|x-x_{0}|}-\frac{|x-x_{0}|}{\lambda}(y-x_{0})\right|}\right]
\left(e^{v_{x_{0},\lambda}(y)}-e^{v(y)}\right)\mathrm{d}y,
\end{aligned}
\end{equation}
\begin{equation}\label{m6'}
\begin{aligned}
w^{v}_{x_{0},\lambda}(x)=&v_{x_{0},\lambda}(x)-v(x)=\frac{1}{2\pi}\int_{B_{\lambda}(x_{0})} \ln\left[\frac{|x-y|}{|x^{x_{0},\lambda}-y|}\right]u^{4}(y)\mathrm{d}y \\
&+\frac{1}{2\pi}\int_{B_{\lambda}(x_{0})} \ln\left[\frac{\left|x-y^{x_{0},\lambda}\right|}{\left|x^{x_{0},\lambda}-y^{x_{0},\lambda}\right|}\right]u_{x_{0},\lambda}^{4}(y)\mathrm{d}y
+3\ln\frac{\lambda}{|x-x_{0}|} \\
=&\frac{1}{2\pi}\int_{B_{\lambda}(x_{0})}\ln\left[\frac{\left|\frac{\lambda(x-x_{0})}{|x-x_{0}|}-\frac{|x-x_{0}|}{\lambda}(y-x_{0})\right|}{\left|x-y\right|}\right]
\left(u_{x_{0},\lambda}^{4}(y)-u^{4}(y)\right)\mathrm{d}y \\
& +\left(3-\alpha\right)\ln\frac{\lambda}{|x-x_{0}|},
\end{aligned}
\end{equation}
where $\alpha:=\frac{1}{2\pi}\int_{\mathbb{R}^{2}}u^{4}(x)\mathrm{d}x\in\left[1,+\infty\right)$.

From \eqref{m6} and the mean value theorem, one can derive that, for any $x\in B^{+}_{\lambda,u}(x_{0})$,
\begin{equation}\label{m7}
\begin{aligned}
0&<w^{u}_{x_{0},\lambda}(x)=u_{x_{0},\lambda}(x)-u(x) \\
&\leq \frac{1}{2\pi}\int_{B^{+}_{\lambda,v}(x_{0})}\left[\frac{1}{|x-y|}-\frac{1}{\left|\frac{\lambda(x-x_{0})}{|x-x_{0}|}-\frac{|x-x_{0}|}{\lambda}(y-x_{0})\right|}\right]
\left(e^{v_{x_{0},\lambda}(y)}-e^{v(y)}\right)\mathrm{d}y \\
&\leq \frac{1}{2\pi}\int_{B^{+}_{\lambda,v}(x_{0})}\frac{1}{|x-y|}e^{\xi_{x_{0},\lambda}(y)}w^{v}_{x_{0},\lambda}(y)\mathrm{d}y \\
&\leq \frac{1}{2\pi}\int_{B^{+}_{\lambda,v}(x_{0})}\frac{e^{v_{x_{0},\lambda}(y)}}{|x-y|}w^{v}_{x_{0},\lambda}(y)\mathrm{d}y,
\end{aligned}
\end{equation}
where $v(y)<\xi_{x_{0},\lambda}(y)<v_{x_{0},\lambda}(y)$ for any $y\in B^{+}_{\lambda,v}(x_{0})$. Here we have used the fact that $|x-y|<\left|\frac{\lambda (x-x_0)}{|x-x_0|}-\frac{|x-x_0|(y-x_0)}{\lambda}\right|$ for any $x,y\in B_\lambda(x_0)\setminus\{x_{0}\}$.

\smallskip

By direct calculations, one can obtain that, for any $x, y\in B_{\lambda}(x_{0})\setminus\{x_{0}\}$ and $x\neq y$,
\begin{equation}\label{m8}
\begin{aligned}
0<2\ln\left[\frac{\left|\frac{\lambda(x-x_{0})}{|x-x_{0}|}-\frac{|x-x_{0}|}{\lambda}(y-x_{0})\right|}{\left|x-y\right|}\right] &=\ln \left[1+\frac{\left(\lambda-\frac{|x-x_{0}|^{2}}{\lambda}\right)\left(\lambda-\frac{|y-x_{0}|^{2}}{\lambda}\right)}{|x-y|^{2}}\right] \\
& \leq \ln \left(1+\frac{\lambda^{2}}{|x-y|^{2}}\right).
\end{aligned}
\end{equation}
It is obvious that, for arbitrary $\varepsilon>0$,
\begin{equation}\label{m9}
\ln (1+t)=o\left(t^{\varepsilon}\right) \,\,\quad\,\, \text { as } \, t \rightarrow+\infty.
\end{equation}
This implies, for any given $\varepsilon>0$, there exists a $\delta(\varepsilon)>0$ such that
\begin{equation}\label{m10}
\ln (1+t) \leq t^{\varepsilon}, \qquad \forall \, t>\frac{1}{\delta(\varepsilon)^{2}}.
\end{equation}
Therefore, from \eqref{m8} and \eqref{m10}, we deduce that
\begin{equation}\label{m11}
\ln\left[\frac{\left|\frac{\lambda(x-x_{0})}{|x-x_{0}|}-\frac{|x-x_{0}|}{\lambda}(y-x_{0})\right|}{\left|x-y\right|}\right]
\leq \frac{1}{2}\frac{\lambda^{2 \varepsilon}}{|x-y|^{2 \varepsilon}}, \qquad \forall \, 0<|x|,|y|<\lambda, \,\,|x-y|<\lambda \delta(\varepsilon),
\end{equation}
\begin{equation}\label{m12}
\ln\left[\frac{\left|\frac{\lambda(x-x_{0})}{|x-x_{0}|}-\frac{|x-x_{0}|}{\lambda}(y-x_{0})\right|}{\left|x-y\right|}\right]
\leq \frac{\ln \left[1+\frac{1}{\delta(\varepsilon)^{2}}\right]}{2}, \quad \forall \, 0<|x|,|y|<\lambda, \,\, |x-y| \geq \lambda \delta(\varepsilon).
\end{equation}
Due to $\alpha\geq3$, from \eqref{m6'}, \eqref{m11}, \eqref{m12} and the mean value theorem, one can derive that, for any given $\varepsilon>0$ small and all $x\in B^{+}_{\lambda,v}(x_{0})$,
\begin{equation}\label{m13}
\begin{aligned}
0&<w^{v}_{x_{0},\lambda}(x)=v_{x_{0},\lambda}(x)-v(x) \\
&\leq\frac{1}{2\pi}\int_{B^{+}_{\lambda,u}(x_{0})}\ln\left[\frac{\left|\frac{\lambda(x-x_{0})}{|x-x_{0}|}-\frac{|x-x_{0}|}{\lambda}(y-x_{0})\right|}{\left|x-y\right|}\right]
\left(u_{x_{0},\lambda}^{4}(y)-u^{4}(y)\right)\mathrm{d}y \\
&\leq \frac{1}{\pi}\int_{B^{+}_{\lambda,u}(x_{0})\cap B_{\lambda \delta(\varepsilon)}(x)} \frac{\lambda^{2 \varepsilon}}{|x-y|^{2 \varepsilon}}\eta_{x_{0},\lambda}^{3}(y)w^{u}_{x_{0},\lambda}(y)\mathrm{d}y \\
&\quad +\frac{\ln \left[1+\frac{1}{\delta(\varepsilon)^{2}}\right]}{\pi}\int_{B^{+}_{\lambda,u}(x_{0})\setminus B_{\lambda \delta(\varepsilon)}(x)}\eta_{x_{0},\lambda}^{3}(y)w^{u}_{x_{0},\lambda}(y)\mathrm{d}y \\
&\leq \frac{1}{\pi}\int_{B^{+}_{\lambda,u}(x_{0})\cap B_{\lambda \delta(\varepsilon)}(x)} \frac{\lambda^{2 \varepsilon}}{|x-y|^{2 \varepsilon}}u_{x_{0},\lambda}^{3}(y)w^{u}_{x_{0},\lambda}(y)\mathrm{d}y \\
&\quad +C\left(\delta(\varepsilon)\right)\int_{B^{+}_{\lambda,u}(x_{0})\setminus B_{\lambda \delta(\varepsilon)}(x)}u_{x_{0},\lambda}^{3}(y)w^{u}_{x_{0},\lambda}(y)\mathrm{d}y,
\end{aligned}
\end{equation}
where $u(y)<\eta_{x_{0},\lambda}(y)<u_{x_{0},\lambda}(y)$ for any $y\in B^{+}_{\lambda,u}(x_{0})$.

\medskip

Now we need the following Hardy-Littlewood-Sobolev inequality (cf. e.g. \cite{FL1,FL,Lieb}, see also \cite{DHL,DGZ,DZ,NN}).
\begin{lem}[Hardy-Littlewood-Sobolev inequality]\label{lem3}
Let $n\geq1$, $0<s<n$, and $1<p<q<+\infty$ be such that $s+\frac{n}{q}=\frac{n}{p}$. Then we have
\begin{equation}
\left\|\int_{\mathbb{R}^{n}} \frac{f(y)}{|x-y|^{n-s}} d y\right\|_{L^{q}\left(\mathbb{R}^{n}\right)} \leq C_{n, s, p, q}\|f\|_{L^{p}\left(\mathbb{R}^{n}\right)}
\end{equation}
for all $f\in L^{p}(\mathbb{R}^{n})$.
\end{lem}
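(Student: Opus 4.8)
The plan is to deduce the inequality from Hedberg's pointwise estimate together with the $L^p$-boundedness of the Hardy--Littlewood maximal operator. Set
\[
I_s f(x):=\int_{\mathbb{R}^n}\frac{f(y)}{|x-y|^{n-s}}\,dy,
\]
let $Mf$ denote the Hardy--Littlewood maximal function of $f$, and let $p'=\frac{p}{p-1}$. We may assume $f\geq 0$ and $f\not\equiv 0$. The key observation drawn from the scaling relation $s+\frac{n}{q}=\frac{n}{p}$ is that, since $q<+\infty$, one has $0<s<\frac{n}{p}$, equivalently $(n-s)p'>n$; this is exactly the integrability needed below, and it also yields the numerology $\frac{sp}{n}=1-\frac{p}{q}$.

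First I would split, for a parameter $R>0$ to be chosen later depending on $x$,
\[
I_s f(x)=\int_{|x-y|<R}\frac{f(y)}{|x-y|^{n-s}}\,dy+\int_{|x-y|\geq R}\frac{f(y)}{|x-y|^{n-s}}\,dy=:A_R(x)+B_R(x).
\]
For $A_R(x)$, decompose the ball $\{|x-y|<R\}$ into the dyadic annuli $\{2^{-k-1}R\leq|x-y|<2^{-k}R\}$, $k\geq 0$; on each annulus one bounds $|x-y|^{s-n}$ from above by $(2^{-k-1}R)^{s-n}$ and the mass of $f$ on the enclosing ball by $C(2^{-k}R)^n Mf(x)$, so summing the resulting geometric series gives $A_R(x)\leq CR^s Mf(x)$. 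For $B_R(x)$, Hölder's inequality gives
\[
B_R(x)\leq\|f\|_{L^p(\mathbb{R}^n)}\Big(\int_{|x-y|\geq R}|x-y|^{(s-n)p'}\,dy\Big)^{1/p'}=CR^{s-\frac{n}{p}}\|f\|_{L^p(\mathbb{R}^n)},
\]
the finiteness of the integral being precisely the condition $(n-s)p'>n$. Balancing the two terms, i.e.\ choosing $R=\big(\|f\|_{L^p(\mathbb{R}^n)}/Mf(x)\big)^{p/n}$, produces Hedberg's bound
\[
I_s f(x)\leq C\,\big(Mf(x)\big)^{1-\frac{sp}{n}}\,\|f\|_{L^p(\mathbb{R}^n)}^{\frac{sp}{n}}.
\]

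Next I would raise this to the power $q$ and integrate. Since $q\big(1-\frac{sp}{n}\big)=q\cdot\frac{p}{q}=p$, this gives
\[
\|I_s f\|_{L^q(\mathbb{R}^n)}\leq C\,\|f\|_{L^p(\mathbb{R}^n)}^{1-\frac{p}{q}}\,\|Mf\|_{L^p(\mathbb{R}^n)}^{\frac{p}{q}}.
\]
Finally, since $p>1$, the Hardy--Littlewood maximal theorem yields $\|Mf\|_{L^p(\mathbb{R}^n)}\leq C\|f\|_{L^p(\mathbb{R}^n)}$, and the two powers of $\|f\|_{L^p(\mathbb{R}^n)}$ combine into a single first power, which is the asserted inequality.

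The real work is hidden inside the Hardy--Littlewood maximal theorem, whose proof rests on the Vitali covering lemma for the weak-$(1,1)$ estimate and on Marcinkiewicz interpolation against the trivial $L^\infty$ bound; in a write-up one would simply cite this standard fact. An alternative route that bypasses the maximal operator is to run the same near/far splitting and, using Young's (or Minkowski's) inequality for $A_R$ together with Chebyshev's inequality, show directly that $I_s$ is of weak type $(p,q)$, and then obtain the strong-type bound by Marcinkiewicz interpolation between two such weak-type estimates with exponents on either side of $(p,q)$ along the scaling line $s+\frac{n}{q}=\frac{n}{p}$; this is the classical Hardy--Littlewood--Sobolev argument but requires somewhat more bookkeeping of the distribution functions.
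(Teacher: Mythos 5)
Your argument is correct, and it is worth noting that the paper does not actually prove this lemma: it invokes the Hardy--Littlewood--Sobolev inequality as a known classical fact and simply cites references (Lieb, Frank--Lieb, and others). There is therefore no proof in the paper against which to compare the structure of your argument.

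The Hedberg approach you chose is a standard and clean route. The bookkeeping all checks out: the scaling relation $s+\frac{n}{q}=\frac{n}{p}$ indeed gives $s<\frac{n}{p}$, hence $(n-s)p'>n$, which is exactly what makes the tail integral converge; the dyadic-annulus estimate gives $A_R(x)\leq CR^s Mf(x)$; optimizing over $R$ produces Hedberg's pointwise bound $I_sf(x)\leq C\big(Mf(x)\big)^{1-\frac{sp}{n}}\|f\|_{L^p}^{\frac{sp}{n}}$; and the exponent identity $q\big(1-\frac{sp}{n}\big)=p$ lets you integrate and invoke the maximal theorem (legitimate since $p>1$). One small remark: you should make explicit that when $Mf(x)=\infty$ the Hedberg bound is vacuous and when $Mf(x)=0$ (which forces $f\equiv 0$) there is nothing to prove, so the choice of $R$ is always well defined; this is the usual minor caveat in writing up Hedberg's argument. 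Your closing remark about the alternative weak-type plus Marcinkiewicz route is also accurate -- that is the original Hardy--Littlewood--Sobolev argument and is what most of the cited references follow -- so you are aware of both standard paths. In short: correct, standard, and appropriately self-contained for a result the paper itself only cites.
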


By Hardy-Littlewood-Sobolev inequality, H\"{o}lder inequality, \eqref{m7} and \eqref{m13}, we have, for any given $\varepsilon\in (0,\frac{1}{6})$ small enough (to be chosen later) and any $\frac{1}{\varepsilon}<q<+\infty$,
\begin{equation}\label{m14}
\begin{aligned}
&\left\|w^{u}_{x_{0},\lambda}\right\|_{L^{r}\left(B_{\lambda,u}^{+}(x_{0})\right)}
\leq C\left\|e^{v_{x_{0},\lambda}} \, w^{v}_{x_{0},\lambda}\right\|_{L^{\frac{6q}{5q+6}}\left(B_{\lambda,v}^{+}(x_{0})\right)} \\
\leq & C\left\|e^{v_{x_{0},\lambda}}\right\|_{L^{\frac{6}{5}}\left(B_{\lambda,v}^{+}(x_{0})\right)}
\left\|w^{v}_{x_{0},\lambda}\right\|_{L^{q}\left(B_{\lambda,v}^{+}(x_{0})\right)} \\
\leq & C\lambda^{\frac{1}{3}}\left(\int_{|x-x_{0}|\geq\lambda}\frac{e^{\frac{6}{5}v(x)}}{|x-x_{0}|^{\frac{2}{5}}}\mathrm{d}x\right)^{\frac{5}{6}}
\left\|w^{v}_{x_{0},\lambda}\right\|_{L^{q}\left(B_{\lambda,v}^{+}(x_{0})\right)} \\
\leq & C\lambda^{\frac{1}{3}}\left(\int_{\mathbb{R}^{2}}\frac{e^{\frac{6}{5}v(x)}}{|x-x_{0}|^{\frac{2}{5}}}\mathrm{d}x\right)^{\frac{5}{6}}
\left\|w^{v}_{x_{0},\lambda}\right\|_{L^{q}\left(B_{\lambda,v}^{+}(x_{0})\right)},
\end{aligned}
\end{equation}
where $r=\frac{3q}{q+3}\in\left(\frac{3}{1+3\varepsilon},3\right)$, and
\begin{equation}\label{m14'}
\begin{aligned}
&\left\|w^{v}_{x_{0},\lambda}\right\|_{L^{q}\left(B_{\lambda,v}^{+}(x_{0})\right)} \\
\leq & C \lambda^{2 \varepsilon}\left\|u_{x_{0},\lambda}^{3} \, w^{u}_{x_{0},\lambda}\right\|_{L^{\frac{q}{1+(1-\varepsilon) q}}\left(B_{\lambda,u}^{+}(x_{0})\right)} \\
&+C(\delta(\varepsilon))\left|B_{\lambda,v}^{+}(x_{0})\right|^{\frac{1}{q}}\int_{B_{\lambda,u}^{+}(x_{0})}u_{x_{0},\lambda}^{3}(y)w^{u}_{x_{0},\lambda}(y)\mathrm{d}y \\
\leq & C \lambda^{2 \varepsilon}\left\|u_{x_{0},\lambda}\right\|^{3}_{L^{\frac{9}{2-3\varepsilon}}\left(B_{\lambda,u}^{+}(x_{0})\right)}
\left\|w^{u}_{x_{0},\lambda}\right\|_{L^{r}\left(B_{\lambda,u}^{+}(x_{0})\right)} \\
&+C(\delta(\varepsilon))\left|B_{\lambda,v}^{+}(x_{0})\right|^{\frac{1}{q}}\left\|u_{x_{0},\lambda}\right\|^{3}_{L^{\frac{3r}{r-1}}\left(B_{\lambda,u}^{+}(x_{0})\right)}
\left\|w^{u}_{x_{0},\lambda}\right\|_{L^{r}\left(B_{\lambda,u}^{+}(x_{0})\right)} \\
\leq & C \lambda^{-\frac{1}{3}-2\varepsilon}
\left(\int_{\mathbb{R}^{2}}|x-x_{0}|^{\frac{1+12\varepsilon}{2-3\epsilon}}u^{\frac{9}{2-3\epsilon}}(x)\mathrm{d}x\right)^{\frac{2-3\varepsilon}{3}}
\left\|w^{u}_{x_{0},\lambda}\right\|_{L^{r}\left(B_{\lambda,u}^{+}(x_{0})\right)} \\
&+ C(\delta(\varepsilon))\lambda^{-\frac{1}{3}-\frac{2}{q}}
\left(\int_{\mathbb{R}^{2}}|x-x_{0}|^{\frac{q+12}{2q-3}}u^{\frac{9q}{2q-3}}(x)\mathrm{d}x\right)^{\frac{2q-3}{3q}}
\left\|w^{u}_{x_{0},\lambda}\right\|_{L^{r}\left(B_{\lambda,u}^{+}(x_{0})\right)}.
\end{aligned}
\end{equation}
Since $\alpha\geq3$, by the asymptotic properties of $(u,v)$ in \eqref{18} and \eqref{33} in Corollary \ref{cor2}, we deduce that, for $\varepsilon\in(0,\frac{1}{6})$ sufficiently small and hence $q\in\left(\frac{1}{\varepsilon},+\infty\right)$ large enough, $\frac{e^{\frac{6}{5}v}}{|x-x_{0}|^{\frac{2}{5}}}\in L^{1}(\mathbb{R}^{2})$, $|x-x_{0}|^{\frac{1+12\varepsilon}{2-3\epsilon}}u^{\frac{9}{2-3\epsilon}}\in L^{1}(\mathbb{R}^{2})$ and $|x-x_{0}|^{\frac{q+12}{2q-3}}u^{\frac{9q}{2q-3}}\in L^{1}(\mathbb{R}^{2})$. Therefore, \eqref{m14} and \eqref{m14'} yields that
\begin{equation}\label{m15}
  \left\|w^{u}_{x_{0},\lambda}\right\|_{L^{r}\left(B_{\lambda,u}^{+}(x_{0})\right)}\leq C_{q,\varepsilon}\max\left\{\lambda^{-2\varepsilon},\lambda^{-\frac{2}{q}}\right\}\left\|w^{u}_{x_{0},\lambda}\right\|_{L^{r}\left(B_{\lambda,u}^{+}(x_{0})\right)},
\end{equation}
\begin{equation}\label{m15'}
  \left\|w^{v}_{x_{0},\lambda}\right\|_{L^{q}\left(B_{\lambda,v}^{+}(x_{0})\right)}\leq C_{q,\varepsilon}\max\left\{\lambda^{-2\varepsilon},\lambda^{-\frac{2}{q}}\right\}\left\|w^{v}_{x_{0},\lambda}\right\|_{L^{q}\left(B_{\lambda,v}^{+}(x_{0})\right)}.
\end{equation}
As a consequence, if we choose $\varepsilon\in\left(0,\frac{1}{6}\right)$ sufficiently small and hence $q\in\left(\frac{1}{\varepsilon},+\infty\right)$ large enough, there exists a $\Lambda_{0}>0$ large enough such that
\begin{equation}\label{m16}
 \left\|w^{u}_{x_{0},\lambda}\right\|_{L^{r}\left(B_{\lambda,u}^{+}\right)}\leq \frac{1}{2}\left\|w^{u}_{x_{0},\lambda}\right\|_{L^{r}\left(B_{\lambda,u}^{+}\right)}, \qquad \left\|w^{v}_{x_{0},\lambda}\right\|_{L^{q}\left(B_{\lambda,v}^{+}\right)}\leq\frac{1}{2}
 \left\|w^{v}_{x_{0},\lambda}\right\|_{L^{q}\left(B_{\lambda,v}^{+}\right)}
\end{equation}
for all $\Lambda_{0}\leq\lambda<+\infty$. By \eqref{m16}, we arrive at
\begin{equation}\label{m17}
\left\|w^{u}_{x_{0},\lambda}\right\|_{L^{r}\left(B_{\lambda,u}^{+}(x_{0})\right)}=\left\|w^{v}_{x_{0},\lambda}\right\|_{L^{q}\left(B_{\lambda,v}^{+}(x_{0})\right)}=0,
\end{equation}
which means $B_{\lambda,u}^{+}(x_{0})=B_{\lambda,v}^{+}(x_{0})=\emptyset$ for any $\Lambda_{0}\leq\lambda<+\infty$. Therefore, we have proved that, for all $\Lambda_{0}\leq\lambda<+\infty$,
\begin{equation}\label{m18}
w^{u}_{x_{0},\lambda}(x)\leq0, \quad w^{v}_{x_{0},\lambda}(x)\leq 0, \qquad \forall \, x \in B_{\lambda}(x_{0})\setminus\{x_{0}\}.
\end{equation}
This completes Step 1 for the case $\alpha\geq3$.

\medskip

\noindent \emph{Case (ii)} $1\leq\alpha\leq3$. We will show that, for $\lambda>0$ sufficiently small,
\begin{equation}\label{m1-}
w^{u}_{x_{0},\lambda}(x)\geq 0 \quad \text{and} \quad w^{v}_{x_{0},\lambda}(x)\geq 0, \qquad \forall x \in B_{\lambda}(x_{0})\setminus\{x_{0}\}.
\end{equation}
That is, we start moving the sphere $S_{\lambda}(x_{0}):=\{x\in\mathbb{R}^{2}\mid \, |x-x_{0}|=\lambda\}$ from near the point $x_{0}$ outward such that \eqref{m1-} holds.

Define
\begin{equation}\label{m2-}
B_{\lambda,u}^{-}(x_{0}):=\left\{x \in B_{\lambda}(x_{0})\setminus\{x_{0}\} \mid w^{u}_{x_{0},\lambda}(x)<0\right\},
\end{equation}
\begin{equation}\label{m2'-}
B_{\lambda,v}^{-}(x_{0}):=\left\{x \in B_{\lambda}(x_{0})\setminus\{x_{0}\} \mid w^{v}_{x_{0},\lambda}(x)<0\right\}.
\end{equation}
We will show that, for $\lambda>0$ sufficiently small,
\begin{equation}\label{m3-}
B_{\lambda,u}^{-}(x_{0})=B_{\lambda,v}^{-}(x_{0})=\emptyset.
\end{equation}

From \eqref{m6} and the mean value theorem, one can derive that, for any $x\in B^{-}_{\lambda,u}(x_{0})$,
\begin{equation}\label{m7-}
\begin{aligned}
0&>w^{u}_{x_{0},\lambda}(x)=u_{x_{0},\lambda}(x)-u(x) \\
&\geq \frac{1}{2\pi}\int_{B^{-}_{\lambda,v}(x_{0})}\left[\frac{1}{|x-y|}-\frac{1}{\left|\frac{\lambda(x-x_{0})}{|x-x_{0}|}-\frac{|x-x_{0}|}{\lambda}(y-x_{0})\right|}\right]
\left(e^{v_{x_{0},\lambda}(y)}-e^{v(y)}\right)\mathrm{d}y \\
&\geq \frac{1}{2\pi}\int_{B^{-}_{\lambda,v}(x_{0})}\frac{1}{|x-y|}e^{\bar{\xi}_{x_{0},\lambda}(y)}w^{v}_{x_{0},\lambda}(y)\mathrm{d}y \\
&\geq \frac{1}{2\pi}\int_{B^{-}_{\lambda,v}(x_{0})}\frac{e^{v(y)}}{|x-y|}w^{v}_{x_{0},\lambda}(y)\mathrm{d}y,
\end{aligned}
\end{equation}
where $v_{x_{0},\lambda}(y)<\bar{\xi}_{x_{0},\lambda}(y)<v(y)$ for any $y\in B^{-}_{\lambda,v}(x_{0})$. Due to $1\leq\alpha\leq3$, from \eqref{m6'}, \eqref{m11}, \eqref{m12} and the mean value theorem, one can derive that, for any given $\varepsilon>0$ small and all $x\in B^{-}_{\lambda,v}(x_{0})$,
\begin{equation}\label{m13-}
\begin{aligned}
0&>w^{v}_{x_{0},\lambda}(x)=v_{x_{0},\lambda}(x)-v(x) \\
&\geq\frac{1}{2\pi}\int_{B^{-}_{\lambda,u}(x_{0})}\ln\left[\frac{\left|\frac{\lambda(x-x_{0})}{|x-x_{0}|}-\frac{|x-x_{0}|}{\lambda}(y-x_{0})\right|}{\left|x-y\right|}\right]
\left(u_{x_{0},\lambda}^{4}(y)-u^{4}(y)\right)\mathrm{d}y \\
&\geq \frac{1}{\pi}\int_{B^{-}_{\lambda,u}(x_{0})\cap B_{\lambda \delta(\varepsilon)}(x)} \frac{\lambda^{2 \varepsilon}}{|x-y|^{2 \varepsilon}}\bar{\eta}_{x_{0},\lambda}^{3}(y)w^{u}_{x_{0},\lambda}(y)\mathrm{d}y \\
&\quad +\frac{\ln \left[1+\frac{1}{\delta(\varepsilon)^{2}}\right]}{\pi}\int_{B^{-}_{\lambda,u}(x_{0})\setminus B_{\lambda \delta(\varepsilon)}(x)}\bar{\eta}_{x_{0},\lambda}^{3}(y)w^{u}_{x_{0},\lambda}(y)\mathrm{d}y \\
&\geq \frac{1}{\pi}\int_{B^{-}_{\lambda,u}(x_{0})\cap B_{\lambda \delta(\varepsilon)}(x)} \frac{\lambda^{2 \varepsilon}}{|x-y|^{2 \varepsilon}}u^{3}(y)w^{u}_{x_{0},\lambda}(y)\mathrm{d}y \\
&\quad +C\left(\delta(\varepsilon)\right)\int_{B^{-}_{\lambda,u}(x_{0})\setminus B_{\lambda\delta(\varepsilon)}(x)}u^{3}(y)w^{u}_{x_{0},\lambda}(y)\mathrm{d}y,
\end{aligned}
\end{equation}
where $u_{x_{0},\lambda}(y)<\bar{\eta}_{x_{0},\lambda}(y)<u(y)$ for any $y\in B^{-}_{\lambda,u}(x_{0})$.

Now we choose $\varepsilon=\frac{1}{9}\in\left(0,\frac{1}{6}\right)$ small enough and $q=12\in\left(\frac{1}{\varepsilon},+\infty\right)$. Since $u\in L^{\infty}_{loc}(\mathbb{R}^{2})$ and $v\in L^{\infty}_{loc}(\mathbb{R}^{2})$, from Hardy-Littlewood-Sobolev inequality, H\"{o}lder inequality, \eqref{m7-} and \eqref{m13-}, we derive that
\begin{equation}\label{m14-}
\begin{aligned}
\left\|w^{u}_{x_{0},\lambda}\right\|_{L^{\frac{12}{5}}\left(B_{\lambda,u}^{-}(x_{0})\right)}
\leq& C\left\|e^{v} \, w^{v}_{x_{0},\lambda}\right\|_{L^{\frac{12}{11}}\left(B_{\lambda,v}^{-}(x_{0})\right)} \\
\leq & C\left\|e^{v}\right\|_{L^{\frac{6}{5}}\left(B_{\lambda,v}^{-}(x_{0})\right)}
\left\|w^{v}_{x_{0},\lambda}\right\|_{L^{12}\left(B_{\lambda,v}^{-}(x_{0})\right)} \\
\leq & C\lambda^{\frac{5}{3}}\left\|w^{v}_{x_{0},\lambda}\right\|_{L^{12}\left(B_{\lambda,v}^{-}(x_{0})\right)}
\end{aligned}
\end{equation}
and
\begin{equation}\label{m14'-}
\begin{aligned}
&\left\|w^{v}_{x_{0},\lambda}\right\|_{L^{12}\left(B_{\lambda,v}^{-}(x_{0})\right)} \\
\leq & C \lambda^{\frac{2}{9}}\left\|u^{3} \, w^{u}_{x_{0},\lambda}\right\|_{L^{\frac{36}{35}}\left(B_{\lambda,u}^{-}(x_{0})\right)}
+C\left|B_{\lambda,v}^{-}(x_{0})\right|^{\frac{1}{12}}\int_{B_{\lambda,u}^{-}(x_{0})}u^{3}(y)w^{u}_{x_{0},\lambda}(y)\mathrm{d}y \\
\leq & C \lambda^{\frac{2}{9}}\left\|u\right\|^{3}_{L^{\frac{27}{5}}\left(B_{\lambda,u}^{-}(x_{0})\right)}
\left\|w^{u}_{x_{0},\lambda}\right\|_{L^{\frac{12}{5}}\left(B_{\lambda,u}^{-}(x_{0})\right)} \\
&+C\left|B_{\lambda,v}^{-}(x_{0})\right|^{\frac{1}{12}}\left\|u\right\|^{3}_{L^{\frac{36}{7}}\left(B_{\lambda,u}^{-}(x_{0})\right)}
\left\|w^{u}_{x_{0},\lambda}\right\|_{L^{\frac{12}{5}}\left(B_{\lambda,u}^{-}(x_{0})\right)} \\
\leq & C \lambda^{\frac{4}{3}}\left\|w^{u}_{x_{0},\lambda}\right\|_{L^{\frac{12}{5}}\left(B_{\lambda,u}^{-}(x_{0})\right)}.
\end{aligned}
\end{equation}
Consequently, \eqref{m14-} and \eqref{m14'-} yields that
\begin{equation}\label{m15-}
  \left\|w^{u}_{x_{0},\lambda}\right\|_{L^{\frac{12}{5}}\left(B_{\lambda,u}^{-}(x_{0})\right)}\leq C\lambda^{3}\left\|w^{u}_{x_{0},\lambda}\right\|_{L^{\frac{12}{5}}\left(B_{\lambda,u}^{-}(x_{0})\right)},
\end{equation}
\begin{equation}\label{m15'-}
  \left\|w^{v}_{x_{0},\lambda}\right\|_{L^{12}\left(B_{\lambda,v}^{-}(x_{0})\right)}\leq C\lambda^{3}\left\|w^{v}_{x_{0},\lambda}\right\|_{L^{12}\left(B_{\lambda,v}^{-}(x_{0})\right)}.
\end{equation}
As a consequence, there exists a $\epsilon_{0}>0$ small enough, such that
\begin{equation}\label{m16-}
 \left\|w^{u}_{x_{0},\lambda}\right\|_{L^{\frac{12}{5}}\left(B_{\lambda,u}^{-}\right)}\leq \frac{1}{2}\left\|w^{u}_{x_{0},\lambda}\right\|_{L^{\frac{12}{5}\left(B_{\lambda,u}^{-}\right)}}, \qquad \left\|w^{v}_{x_{0},\lambda}\right\|_{L^{12}\left(B_{\lambda,v}^{-}\right)}\leq\frac{1}{2}
 \left\|w^{v}_{x_{0},\lambda}\right\|_{L^{12}\left(B_{\lambda,v}^{-}\right)}
\end{equation}
for all $0<\lambda\leq\epsilon_{0}$. By \eqref{m16-}, we arrive at
\begin{equation}\label{m17-}
\left\|w^{u}_{x_{0},\lambda}\right\|_{L^{\frac{12}{5}}\left(B_{\lambda,u}^{-}(x_{0})\right)}=\left\|w^{v}_{x_{0},\lambda}\right\|_{L^{12}\left(B_{\lambda,v}^{-}(x_{0})\right)}=0,
\end{equation}
which means $B_{\lambda,u}^{-}(x_{0})=B_{\lambda,v}^{-}(x_{0})=\emptyset$ for any $0<\lambda\leq\epsilon_{0}$. Therefore, we have proved that, for all $0<\lambda\leq\epsilon_{0}$,
\begin{equation}\label{m18-}
w^{u}_{x_{0},\lambda}(x)\geq0, \quad w^{v}_{x_{0},\lambda}(x)\geq 0, \qquad \forall \, x \in B_{\lambda}(x_{0})\setminus\{x_{0}\}.
\end{equation}
This completes Step 1 for the case $1\leq\alpha\leq3$.

\bigskip

Step 2. Moving the sphere $S_{\lambda}$ outward or inward until the limiting position.

\medskip

In what follows, we will derive contradictions in both the cases $\alpha>3$ and $1\leq \alpha<3$, and hence we must have $\alpha=3$.

\medskip

\noindent\emph{Case (i)} $\alpha>3$.
Step 1 provides a starting point to carry out the method of moving spheres for any given center $x_0\in \mathbb{R}^{2}$. Next we will continuously decrease the radius $\lambda$ as long as \eqref{m1} holds. For arbitrarily given center $x_0$, the critical scale $\lambda_{x_0}$ is defined by
\begin{equation}\label{m19}
\lambda_{x_0}:=\inf\left\{\lambda\in(0,+\infty) \mid w^{u}_{x_0,\mu}\leq 0, \, w^{v}_{x_0,\mu}\leq 0 \,\, \text{in} \,\, B_\mu(x_0)\setminus\{x_0\}, \,\, \forall \, \lambda\leq\mu<+\infty\right\}.
\end{equation}

\smallskip

From Step 1, we know that $\lambda_{x_{0}}$ is well-defined and $0\leq\lambda_{x_{0}}<+\infty$ for any $x_{0}\in\mathbb{R}^{2}$. We first show that, in the case $\alpha>3$, it must hold $\lambda_{x_0}=0$. Suppose on the contrary that $\lambda_{x_0}>0$, we will prove that the the sphere can be moved a bit further, which contradicts the definition of $\lambda_{x_{0}}$.

\smallskip

By the definition of $\lambda_{x_{0}}$, we have $w^{u}_{x_{0}, \lambda_{x_{0}}}\leq 0$ and $w^{v}_{x_{0},\lambda_{x_{0}}}\leq 0$ in $B_{\lambda_{x_{0}}}(x_{0}) \setminus\{x_{0}\}$. Then, we infer from \eqref{m6'} that, for any $x\in B_{\lambda_{x_{0}}}(x_{0})\setminus\{x_{0}\}$, it holds
\begin{equation}\label{m20}
\begin{aligned} w^{v}_{x_{0},\lambda_{x_0}}(x)
=&\frac{1}{2\pi}\int_{B_{\lambda_{x_0}}(x_{0})}\ln\left[\frac{\left|\frac{\lambda_{x_0}(x-x_{0})}{|x-x_{0}|}-\frac{|x-x_{0}|}{\lambda_{x_0}}(y-x_{0})\right|}{\left|x-y\right|}\right]
		\left(u_{x_{0},\lambda_{x_0}}^{4}(y)-u^{4}(y)\right)\mathrm{d}y \\
		& +\left(3-\alpha\right)\ln\frac{\lambda_{x_0}}{|x-x_{0}|}\\
		\leq&\left(3-\alpha\right)\ln\frac{\lambda_{x_0}}{|x-x_{0}|}<0,
\end{aligned}
\end{equation}
which combined with \eqref{m6}, further implies that
\begin{equation}\label{m21}
\begin{aligned}
w^{u}_{x_{0},\lambda_{x_0}}(x)=&\frac{1}{2\pi}\int_{B_{\lambda_{x_0}}(x_{0})}\left[\frac{1}{|x-y|}-\frac{1}{\left|\frac{\lambda_{x_0}(x-x_{0})}{|x-x_{0}|}-\frac{|x-x_{0}|}{\lambda_{x_0}}(y-x_{0})\right|}\right] \\ &\times\left(e^{v_{x_{0},\lambda_{x_0}}(y)}-e^{v(y)}\right)\mathrm{d}y<0.
\end{aligned}
\end{equation}

Now choose $\delta_1>0$ sufficiently small, which will be determined later. Define the narrow region
\begin{equation}\label{m22}
	A_{\delta_1}:=\left\{x\in \mathbb{R}^2 \,|\,0<|x-x_0|<\delta_1 \,\, \text{or} \,\, \lambda_{x_0}-\delta_1<|x-x_0|<\lambda_{x_0}\right\}\subset B_{\lambda_{x_{0}}}(x_{0})\setminus\{x_{0}\}.
\end{equation}	
Since that $w^u_{x_0, \lambda_{x_0}}$ and $w^v_{x_0, \lambda_{x_0}}$ are continuous in $\mathbb{R}^{2}\setminus\{x_0\}$ and $A_{\delta_1}^c:=\left(B_{\lambda_{x_0}}(x_0)\setminus\{x_{0}\}\right)\setminus A_{\delta_1} $ is a compact subset, there exists a positive constant $C_0>0$ such that
\begin{equation}\label{m23}
	w^u_{x_0, \lambda_{x_0}}(x)<-C_0 \quad \text{and} \quad w^v_{x_0, \lambda_{x_0}}(x)<-C_0, \quad\quad \forall \,\, x\in A_{\delta_1}^c.
\end{equation}
By continuity, we can choose $\delta_2>0$ sufficiently small such that, for any $\lambda\in [\lambda_{x_0}-\delta_2,\,\lambda_{x_0}]$,
\begin{equation}\label{m24}
	w^{u}_{x_0, \lambda}(x)<-\frac{C_0}{2} \quad \text{and} \quad w^{v}_{x_0,\lambda}(x)<-\frac{C_0}{2}, \quad\quad \forall \,\, x\in A_{\delta_1}^c.
\end{equation}
Hence we must have
\begin{equation}\label{m25}
\begin{aligned}
	B_{\lambda,u}^{+}\cup B_{\lambda,v}^{+}&\subset\left(B_{\lambda}(x_0)\setminus\{x_{0}\}\right)\setminus A^{c}_{\delta_1} \\
&=\left\{x\in \mathbb{R}^2\,|\,0<|x-x_0|<\delta_1 \,\, \text{or} \,\, \lambda_{x_0}-\delta_1<|x-x_0|<\lambda\right\}
\end{aligned}
\end{equation}
for any $\lambda\in [\lambda_{x_0}-\delta_2,\,\lambda_{x_0}]$. By \eqref{m14} and \eqref{m14'}, we conclude that, for $\varepsilon\in\left(0,\frac{1}{6}\right)$ small and $q\in\left(\frac{1}{\varepsilon},+\infty\right)$ large,
\begin{equation}\label{m26}
	\begin{aligned}
		\left\|w^{u}_{x_{0},\lambda}\right\|_{L^{r}\left(B_{\lambda,u}^{+}(x_{0})\right)}		
		\leq&  C\left\|e^{v_{x_{0},\lambda}}\right\|_{L^{\frac{6}{5}}\left(B_{\lambda,v}^{+}(x_{0})\right)}
		\left\|w^{v}_{x_{0},\lambda}\right\|_{L^{q}\left(B_{\lambda,v}^{+}(x_{0})\right)} \\
		\leq& C_{\lambda,\varepsilon}\left\|e^{v_{x_{0},\lambda}}\right\|_{L^{\frac{6}{5}}\left(B_{\lambda,v}^{+}(x_{0})\right)}
\left\|w^{u}_{x_{0},\lambda}\right\|_{L^{r}\left(B_{\lambda,u}^{+}(x_{0})\right)} \\
	&\times\left(\left\|u_{x_{0},\lambda}\right\|^{3}_{L^{\frac{9}{2-3\varepsilon}}\left(B_{\lambda,u}^{+}(x_{0})\right)}
	+\left|B_{\lambda,v}^{+}(x_{0})\right|^{\frac{1}{q}}\left\|u_{x_{0},\lambda}\right\|^{3}_{L^{\frac{3r}{r-1}}\left(B_{\lambda,u}^{+}(x_{0})\right)}\right),
	\end{aligned}
\end{equation}
and
\begin{equation}\label{m27}
	\begin{aligned}
		\left\|w^{v}_{x_{0},\lambda}\right\|_{L^{q}\left(B_{\lambda,v}^{+}(x_{0})\right)}\leq & C_{\lambda,\varepsilon}\left\|e^{v_{x_{0},\lambda}}\right\|_{L^{\frac{6}{5}}\left(B_{\lambda,v}^{+}(x_{0})\right)}
\left\|w^{v}_{x_{0},\lambda}\right\|_{L^{q}\left(B_{\lambda,v}^{+}(x_{0})\right)} \\
&\times \left(\left\|u_{x_{0},\lambda}\right\|^{3}_{L^{\frac{9}{2-3\varepsilon}}\left(B_{\lambda,u}^{+}(x_{0})\right)}
+\left|B_{\lambda,v}^{+}(x_{0})\right|^{\frac{1}{q}}\left\|u_{x_{0},\lambda}\right\|^{3}_{L^{\frac{3r}{r-1}}\left(B_{\lambda,u}^{+}(x_{0})\right)}\right),
	\end{aligned}
\end{equation}
where $r=\frac{3q}{q+3}$. Since $\alpha\geq3$, we can infer from the asymptotic properties of $(u,v)$ in \eqref{18} and \eqref{33} in Corollary \ref{cor2} that, for $\varepsilon\in\left(0,\frac{1}{6}\right)$ sufficiently small and hence $q\in\left(\frac{1}{\varepsilon},+\infty\right)$ large enough, $e^{v_{x_{0},\lambda}}\in L^{\frac{6}{5}}(\mathbb{R}^{2})$ and $u_{x_{0},\lambda}\in L^{\frac{9}{2-3\varepsilon}}\cap L^{\frac{9q}{2q-3}}(\mathbb{R}^{2})$. Therefore, by \eqref{m25}, we can choose $\delta_1$ and $\delta_2$ sufficiently small such that, for any $\lambda\in [\lambda_{x_0}-\delta_2,\,\lambda_{x_0}]$,
\begin{equation}\label{m28}
C_{\lambda,\varepsilon}\left\|e^{v_{x_{0},\lambda}}\right\|_{L^{\frac{6}{5}}\left(B_{\lambda,v}^{+}(x_{0})\right)}\left[\left\|u_{x_{0},\lambda}\right\|^{3}_{L^{\frac{9}{2-3\varepsilon}}\left(B_{\lambda,u}^{+}\right)}
+\left|B_{\lambda,v}^{+}\right|^{\frac{1}{q}}\left\|u_{x_{0},\lambda}\right\|^{3}_{L^{\frac{3r}{r-1}}\left(B_{\lambda,u}^{+}\right)}\right]<\frac{1}{2}.
\end{equation}
Combining this with \eqref{m26} and \eqref{m27}, we obtain that, for any $\lambda\in[\lambda_{x_0}-\delta_2,\,\lambda_{x_0}]$,
\begin{equation}\label{m29} \left\|w^{u}_{x_{0},\lambda}\right\|_{L^{r}\left(B_{\lambda,u}^{+}(x_{0})\right)}=\left\|w^{v}_{x_{0},\lambda}\right\|_{L^{q}\left(B_{\lambda,v}^{+}(x_{0})\right)}=0,
\end{equation}
and hence
\begin{equation}\label{m30}
	w^{u}_{x_{0},\lambda}(x)\leq 0 \quad \text{and} \quad w^{v}_{x_{0},\lambda}(x)\leq 0,\quad\quad \forall \,\, x\in B_{\lambda}(x_0)\setminus\{x_0\}.
\end{equation}
This contradicts the definition of $\lambda_{x_0}$. Therefore, we must have $\lambda_{x_0}=0$ for any $x_0\in\mathbb R^2$.

\medskip

In order to derive a contradiction, we also need the following calculus Lemma (see Lemma 11.1 and Lemma 11.2 in \cite{LZ1}, see also \cite{Li,LZ,Xu}).
\begin{lem}[Lemma 11.1 and Lemma 11.2 in \cite{LZ1}]\label{lem10}
Let $n\geq 1$, $\nu\in\mathbb{R}$ and $u\in C^{1}({\mathbb{R}^n})$. For every $x_0\in \mathbb{R}^n$ and $\lambda>0$, define $u_{x_0,\lambda}(x):={\left(\frac{\lambda}{|x-x_0|}\right)}^{\nu}u\left(\frac{\lambda^2(x-x_0)}{|x-x_0|^2}+x_0\right)$ for any $x\in\mathbb{R}^{n}\setminus\{x_{0}\}$. Then, we have\\
	\emph{(i)} If for every $x_0\in \mathbb{R}^n$, there exists a $0<\lambda_{x_0}<+\infty$ such that
	\begin{equation*}
		u_{x_0,\lambda_{x_0}}(x)=u(x), \qquad \forall \,\, x\in\mathbb{R}^n\setminus\{x_0\},
	\end{equation*}
	then for some $C\in\mathbb{R}$, $\mu>0$ and $\bar x\in \mathbb{R}^n$,
	\begin{equation*}
		u(x)=C{\left(\frac{\mu}{1+\mu^{2}|x-\bar x|^2}\right)}^{\frac{\nu}{2}}.
	\end{equation*}
	\emph{(ii)} If for every $x_0\in \mathbb{R}^n$ and any $0<\lambda<+\infty$,
	\begin{equation*}
		u_{x_0,\lambda}(x)\geq u(x), \qquad \forall \,\, x\in B_\lambda(x_0)\setminus\{x_0\},
	\end{equation*}
	then $u\equiv C$ for some constant $C\in\mathbb{R}$.
\end{lem}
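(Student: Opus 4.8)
\medskip

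The plan is to treat parts (i) and (ii) separately; in each case the key is to convert the $n$-dimensional Kelvin-transform hypothesis into a one-variable statement along rays and then exploit that the center $x_0$ is arbitrary. For \emph{part (ii)}, I would fix $x_0\in\mathbb{R}^n$ and a unit vector $\theta$ and set $h(t):=u(x_0+t\theta)$. For $x=x_0+r\theta$ with $0<r<\lambda$ one has $x^{x_0,\lambda}=x_0+(\lambda^2/r)\theta$, so $u_{x_0,\lambda}(x)\ge u(x)$ reads $(\lambda/r)^{\nu}h(\lambda^2/r)\ge h(r)$; putting $s:=\lambda^2/r$, which sweeps out $(r,\infty)$ as $\lambda$ varies, and using $(\lambda/r)^{\nu}=(s/r)^{\nu/2}$, this becomes $s^{\nu/2}h(s)\ge r^{\nu/2}h(r)$ for all $0<r<s$. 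Thus $t\mapsto t^{\nu/2}u(x_0+t\theta)$ is nondecreasing on $(0,\infty)$, for every $x_0$ and every $\theta$. Differentiating (valid since $u\in C^{1}$) and multiplying by $t^{1-\nu/2}>0$ yields $\tfrac{\nu}{2}u(x)+\nabla u(x)\cdot(x-x_0)\ge0$ at $x=x_0+t\theta$, hence for every $x_0\in\mathbb{R}^n$ and every $x$; fixing $x$ and letting $x_0$ vary forces $\nabla u(x)\cdot w\ge-\tfrac{\nu}{2}u(x)$ for all $w\in\mathbb{R}^n$, which is impossible unless $\nabla u(x)=0$. Hence $u\equiv C$, and this part presents no real difficulty.

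For \emph{part (i)}, if $u\equiv0$ take $C=0$. Otherwise I would first prove that $u$ has constant sign: letting $|x|\to\infty$ in $u(x)=(\lambda_{x_0}/|x-x_0|)^{\nu}u(x^{x_0,\lambda_{x_0}})$ and using $x^{x_0,\lambda_{x_0}}\to x_0$, one finds that $\lim_{|x|\to\infty}|x|^{\nu}u(x)$ exists and equals $\lambda_{x_0}^{\nu}u(x_0)=:A$ for \emph{every} $x_0$; hence $\lambda_{x_0}^{\nu}u(x_0)\equiv A$, so either $A=0$, which forces $u\equiv0$ (excluded), or $u$ never vanishes and its sign is constant. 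Replacing $u$ by $-u$ if necessary, assume $u>0$; if $\nu=0$ the same limit gives $u(x_0)=A$ for all $x_0$, so $u\equiv C$, and henceforth $\nu\neq0$. Now set $P:=u^{-2/\nu}\in C^{1}(\mathbb{R}^n)$, $P>0$; a direct computation shows $u_{x_0,\lambda_{x_0}}\equiv u$ is equivalent to
\[
P(x)=\frac{|x-x_0|^{2}}{\lambda_{x_0}^{2}}\,P\!\left(x^{x_0,\lambda_{x_0}}\right),\qquad\forall\,x\in\mathbb{R}^n.
\]

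Letting $|x|\to\infty$ in this identity shows that $a:=\lim_{|x|\to\infty}|x|^{-2}P(x)=P(x_0)/\lambda_{x_0}^{2}\in(0,\infty)$ exists, so $\lambda_{x_0}^{2}=P(x_0)/a$. Next I would expand along a ray $x=R\theta$: since $x^{x_0,\lambda_{x_0}}=x_0+\tfrac{\lambda_{x_0}^{2}}{R}\theta+O(R^{-2})$ and $P\in C^{1}$,
\[
P(R\theta)=aR^{2}+\big(\nabla P(x_0)\cdot\theta-2a\,x_0\cdot\theta\big)R+o(R).
\]
Because the left side is independent of $x_0$, so is the coefficient of $R$; evaluating at $x_0=0$ identifies it as $\nabla P(0)\cdot\theta$, whence $\nabla P(x_0)=\nabla P(0)+2a\,x_0$ for all $x_0$. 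Integrating, $P$ is a quadratic polynomial, and $P>0$ forces $P(x)=a|x-\bar x|^{2}+d$ with $a,d>0$. Therefore $u=P^{-\nu/2}=d^{-\nu/2}\big(1+(a/d)|x-\bar x|^{2}\big)^{-\nu/2}$, which is exactly the asserted form with $\mu^{2}=a/d$ and $C=(ad)^{-\nu/4}$.

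The step I expect to be most delicate is the end of part (i): the constant-sign reduction, and above all the second-order ray expansion under only $C^{1}$ regularity---one must check that the expansion of $P$ composed with the inversion holds uniformly in $\theta\in S^{n-1}$ and that its remainder is genuinely $o(R)$, so that matching the coefficients of $R$ for two different centers is legitimate. Everything else is routine bookkeeping.
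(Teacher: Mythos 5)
Your proof is correct. Note first that the paper does not actually prove this lemma---it simply cites Lemmas 11.1 and 11.2 of Li--Zhang \cite{LZ1}, with Remark \ref{rem11} observing that those are stated for $\nu>0$ and extend to $\nu\le 0$. Your argument for part \emph{(ii)} (deduce that $t\mapsto t^{\nu/2}u(x_0+t\theta)$ is nondecreasing, differentiate, multiply by $t^{1-\nu/2}$, and then let $x_0$ range to conclude $\nabla u\equiv 0$) is precisely the classical argument from Li--Zhu/Li--Zhang. Your argument for part \emph{(i)} is also the standard route: extract the asymptotic constant $A=\lambda_{x_0}^{\nu}u(x_0)$ to get the sign, pass to $P=u^{-2/\nu}$, identify $a=\lim_{|x|\to\infty}|x|^{-2}P(x)=P(x_0)/\lambda_{x_0}^2$, and match the linear coefficients of $R$ in the ray expansion of $P(R\theta)$ for different centers to obtain $\nabla P(x_0)=\nabla P(0)+2a x_0$, whence $P$ is a strictly positive quadratic and $u=P^{-\nu/2}$ has the claimed form. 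The only thing to clarify is the concern you raise at the end: no uniformity in $\theta$ is required. You only need, for each fixed $\theta$ and each fixed center $x_0$, that the remainder in $P(R\theta)=aR^2+\bigl(\nabla P(x_0)\cdot\theta-2a\,x_0\cdot\theta\bigr)R+o(R)$ is $o(R)$ as $R\to+\infty$; this follows directly from differentiability of $P$ at $x_0$ (since $|h(R)|=O(1/R)$, so $o(|h(R)|)=o(1/R)$, and multiplying by the exact quadratic prefactor $|R\theta-x_0|^2/\lambda_{x_0}^2$ gives $o(R)$). Comparing the resulting expansions for two centers and a single fixed $\theta$ and dividing by $R$ is then legitimate. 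A minor bonus of your phrasing is that it handles $\nu=0$ and $\nu<0$ without modification, directly giving the full statement as used in the paper.
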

\begin{rem}\label{rem11}
In Lemma 11.1 and Lemma 11.2 of \cite{LZ1}, Li and Zhang have proved Lemma \ref{lem10} for $\nu>0$. Nevertheless, their methods can also be applied to show Lemma \ref{lem10} in the cases $\nu\leq0$, see \cite{Li,LZ,Xu}.
\end{rem}

From the conclusion (ii) in Lemma \ref{lem10} (replacing $u$ by $-u$ therein), we deduce that $u\equiv C$ for some constant $C$. Since $u^4\in L^1{(\mathbb R^2)}$, we must have $u\equiv0$. However, by the first equation in the system \eqref{PDE}, we have
$$0=e^{v(x)}>0 \qquad \text{in} \,\, \mathbb{R}^{2}.$$
This is a contradiction and hence $\alpha>3$ is impossible.

\medskip

\noindent\emph{Case (ii)} $1\leq \alpha<3$. In this case, for arbitrarily given center $x_0$, the critical scale $\lambda_{x_0}$ is defined by
\begin{equation}\label{m31}
	\lambda_{x_0}:=\sup\left\{\lambda>0\mid \, w^u_{x_0,\mu}\geq 0, \, w^v_{x_0,\mu}\geq 0 \,\, \text{in} \,\, B_\mu(x_0)\setminus\{x_0\}, \,\, \forall \, 0<\mu\leq\lambda\right\}.
\end{equation}
Step 1 yields that $\lambda_{x_{0}}$ is well-defined and $0<\lambda_{x_{0}}\leq+\infty$ for any $x_{0}\in\mathbb{R}^{2}$. We will show that $\lambda_{x_0}=+\infty$, which will lead to a contradiction again as in \emph{Case (i)} $\alpha>3$. Suppose on the contrary that $\lambda_{x_0}<+\infty$, we will prove that the sphere can be moved a bit further, which contradicts the definition of $\lambda_{x_0}$.

\smallskip

By the definition of $\lambda_{x_0}$, we have $w^u_{x_0, \lambda_{x_0}}\geq 0$ and $w^v_{x_0,\lambda_{x_0}}\geq 0$ in $B_{\lambda_{x_0}}(x_0) \setminus\{x_0\}$. Then, we infer from \eqref{m6'} that, for any $x\in B_{\lambda_{x_{0}}}(x_{0})\setminus\{x_{0}\}$, it holds
\begin{equation}\label{m32}
	\begin{aligned}
		w^{v}_{x_{0},\lambda_{x_0}}(x) =&\frac{1}{2\pi}\int_{B_{\lambda_{x_0}}(x_{0})}\ln\left[\frac{\left|\frac{\lambda_{x_0}(x-x_{0})}{|x-x_{0}|}-\frac{|x-x_{0}|}{\lambda_{x_0}}(y-x_{0})\right|}{\left|x-y\right|}\right]
		\left(u_{x_{0},\lambda_{x_0}}^{4}(y)-u^{4}(y)\right)\mathrm{d}y \\
		& +\left(3-\alpha\right)\ln\frac{\lambda_{x_0}}{|x-x_{0}|}\\
		\geq&\left(3-\alpha\right)\ln\frac{\lambda_{x_0}}{|x-x_{0}|}>0,
	\end{aligned}
\end{equation}
which combined with \eqref{m6}, further implies that
\begin{equation}\label{m33}
\begin{aligned}
	w^{u}_{x_{0},\lambda_{x_0}}(x) =&\frac{1}{2\pi}\int_{B_{\lambda_{x_0}}(x_{0})}\left[\frac{1}{|x-y|}-\frac{1}{\left|\frac{\lambda_{x_0}(x-x_{0})}{|x-x_{0}|}-\frac{|x-x_{0}|}{\lambda_{x_0}}(y-x_{0})\right|}\right] \\
&\times \left(e^{v_{x_{0},\lambda_{x_0}}(y)}-e^{v(y)}\right)\mathrm{d}y>0.
\end{aligned}
\end{equation}

Now choose $\delta_1>0$ sufficiently small, which will be determined later. Define the narrow region $A_{\delta_1}\subset B_{\lambda_{x_{0}}}(x_{0})\setminus\{x_{0}\}$ by \eqref{m22}. Since $w^u_{x_0, \lambda_{x_0}}$ and $w^v_{x_0, \lambda_{x_0}}$ are continuous in $\mathbb{R}^{2}\setminus\{x_0\}$ and $A_{\delta_1}^c:=\left(B_{\lambda_{x_0}}(x_0)\setminus\{x_0\}\right)\setminus A_{\delta_1}$ is a compact subset, there exists a positive constant $C_1>0$ such that
\begin{equation}\label{m34}
	w^u_{x_0, \lambda_{x_0}}(x)>C_1 \quad \text{and} \quad w^v_{x_0, \lambda_{x_0}}(x)>C_1, \quad\quad \forall \,\, x\in A_{\delta_1}^c.
\end{equation}
By continuity, we can choose $\delta_2>0$ sufficiently small, such that, for any $\lambda\in [\lambda_{x_0},\,\lambda_{x_0}+\delta_2]$, there holds
\begin{equation}\label{m35}
	w^u_{x_0, \lambda}(x)>\frac{C_1}{2} \quad \text{and} \quad w^v_{x_0, \lambda}(x)>\frac{C_1}{2}, \quad\quad \forall \,\, x\in A_{\delta_1}^c.
\end{equation}
Hence we must have
\begin{equation}\label{m36}
\begin{aligned}
	B_{\lambda,u}^{-}\cup B_{\lambda,v}^{-}&\subset \left(B_{\lambda}(x_0)\setminus\{x_0\}\right)\setminus A^{c}_{\delta_1} \\
&=\left\{x\in \mathbb{R}^2\,|\,0<|x-x_0|<\delta_1\,\, \text{or}\,\, \lambda_{x_0}-\delta_1<|x-x_0|<\lambda\right\}
\end{aligned}
\end{equation}
for any $\lambda\in [\lambda_{x_0},\,\lambda_{x_0}+\delta_2]$. By \eqref{m14-} and \eqref{m14'-}, we conclude that
\begin{equation}\label{m37}
	\begin{aligned}
		\left\|w^{u}_{x_{0},\lambda}\right\|_{L^{\frac{12}{5}}\left(B_{\lambda,u}^{-}(x_{0})\right)}
		\leq& C\left\|e^{v}\right\|_{L^{\frac{6}{5}}\left(B_{\lambda,v}^{-}(x_{0})\right)}
		\left\|w^{v}_{x_{0},\lambda}\right\|_{L^{12}\left(B_{\lambda,v}^{-}(x_{0})\right)} \\
		\leq& C_{\lambda}\left\|e^{v}\right\|_{L^{\frac{6}{5}}\left(B_{\lambda,v}^{-}(x_{0})\right)}
\left\|w^{u}_{x_{0},\lambda}\right\|_{L^{\frac{12}{5}}\left(B_{\lambda,u}^{-}(x_{0})\right)} \\
&\times \left(\left\|u\right\|^{3}_{L^{\frac{27}{5}}\left(B_{\lambda,u}^{-}(x_{0})\right)}
+\left|B_{\lambda,v}^{-}(x_{0})\right|^{\frac{1}{12}}\left\|u\right\|^{3}_{L^{\frac{36}{7}}\left(B_{\lambda,u}^{-}(x_{0})\right)}\right),
\end{aligned}
\end{equation}
and
\begin{equation}\label{m38}
	\begin{aligned}
	\left\|w^{v}_{x_{0},\lambda}\right\|_{L^{12}\left(B_{\lambda,v}^{-}(x_{0})\right)}
 \leq& C_{\lambda}\left\|e^{v}\right\|_{L^{\frac{6}{5}}\left(B_{\lambda,v}^{-}(x_{0})\right)}
 \left\|w^{v}_{x_{0},\lambda}\right\|_{L^{12}\left(B_{\lambda,v}^{-}(x_{0})\right)} \\
 &\times \left(\left\|u\right\|^{3}_{L^{\frac{27}{5}}\left(B_{\lambda,u}^{-}(x_{0})\right)}
 +\left|B_{\lambda,v}^{-}(x_{0})\right|^{\frac{1}{12}}\left\|u\right\|^{3}_{L^{\frac{36}{7}}\left(B_{\lambda,u}^{-}(x_{0})\right)}\right).
	\end{aligned}
\end{equation}
By the local boundedness of $u$ and $v$, we can choose $\delta_1$ and $\delta_2$ sufficiently small such that, for any $\lambda\in [\lambda_{x_0},\,\lambda_{x_0}+\delta_2]$,
\begin{equation}\label{m39} C_{\lambda}\left\|e^{v}\right\|_{L^{\frac{6}{5}}\left(B_{\lambda,v}^{-}(x_{0})\right)}\left(\left\|u\right\|^{3}_{L^{\frac{27}{5}}\left(B_{\lambda,u}^{-}(x_{0})\right)}
+\left|B_{\lambda,v}^{-}(x_{0})\right|^{\frac{1}{12}}\left\|u\right\|^{3}_{L^{\frac{36}{7}}\left(B_{\lambda,u}^{-}(x_{0})\right)}\right)<\frac{1}{2}.
\end{equation}
Then, by \eqref{m37} and \eqref{m38}, we obtain that, for any $\lambda\in[\lambda_{x_0},\,\lambda_{x_0}+\delta_2]$,
\begin{equation}\label{m40}	\left\|w^{u}_{x_{0},\lambda}\right\|_{L^{\frac{12}{5}}\left(B_{\lambda,u}^{-}(x_{0})\right)}
=\left\|w^{v}_{x_{0},\lambda}\right\|_{L^{12}\left(B_{\lambda,v}^{-}(x_{0})\right)}=0,
\end{equation}
and hence
\begin{equation}\label{m41}
	w^{u}_{x_{0},\lambda}(x)\geq 0 \quad \text{and} \quad w^{v}_{x_{0},\lambda}(x)\geq 0,\quad\quad \forall \,\, x\in B_{\lambda}(x_0)\setminus\{x_0\}.
\end{equation}
This contradicts the definition of $\lambda_{x_0}$. Therefore, we must have $\lambda_{x_0}=+\infty$ for any $x_0\in\mathbb R^2$.

\medskip

From the conclusion (ii) in Lemma \ref{lem10} and the fact that $u^4\in L^1{(\mathbb R^2)}$, we must have $u\equiv0$, which will lead to a contradiction again by the first equation in system \eqref{PDE}. Hence, the case $1\leq \alpha<3$ can not happen.

\medskip

From the contradictions derived in both Cases (i) and (ii), we conclude that
\begin{equation}\label{a8}
  \alpha:=\frac{1}{2\pi}\int_{\mathbb{R}^{2}}u^{4}(x)\mathrm{d}x=3.
\end{equation}
In this case, we deduce from Step 1 that, for $\lambda>0$ large,
 \begin{equation}\label{a9}
 	w^{u}_{x_{0},\lambda}(x)\leq 0 \quad \text{and} \quad w^{v}_{x_{0},\lambda}(x)\leq 0, \qquad \forall x \in B_{\lambda}(x_{0})\setminus\{x_{0}\};
 \end{equation}
while for $\lambda>0$ small,
\begin{equation}\label{a10}
	w^{u}_{x_{0},\lambda}(x)\geq 0 \quad \text{and} \quad w^{v}_{x_{0},\lambda}(x)\geq 0, \qquad \forall x \in B_{\lambda}(x_{0})\setminus\{x_{0}\}.
\end{equation}
If the critical scale (defined in \eqref{m31}) $\lambda_{x_{0}}<+\infty$, then we must have $w^{u}_{x_{0},\lambda_{x_{0}}}=w^{v}_{x_{0},\lambda_{x_{0}}}=0$ in $B_{\lambda_{x_{0}}}(x_{0})\setminus\{x_{0}\}$, or else the sphere $S_{\lambda}$ can be moved a bit further such that \eqref{a10} still hold (see \emph{Case (ii)} or \emph{Case (i)} in Step 2), which contradicts the definition \eqref{m31} of $\lambda_{x_{0}}$. If the critical scale (defined in \eqref{m31}) $\lambda_{x_{0}}=+\infty$, it follows from \eqref{a9} that $w^{u}_{x_{0},\lambda}=w^{v}_{x_{0},\lambda}=0$ in $B_{\lambda}(x_{0})\setminus\{x_{0}\}$ for $\lambda$ sufficiently large. As a consequence, for arbitrary $x_0\in\mathbb R^2$, there exists a $\lambda>0$ (depending on $x_{0}$) such that
\begin{equation}\label{m42}
	w^{u}_{x_{0},\lambda}(x)=0 \quad \text{and} \quad w^{v}_{x_{0},\lambda}(x)=0, \qquad \forall x \in B_{\lambda}(x_{0})\setminus\{x_{0}\}.
\end{equation}
Then, we infer from the conclusion (i) in Lemma \ref{lem10} that, for some $C\in\mathbb{R}$, $\mu>0$ and $x_0\in \mathbb{R}^2$, $u$ must be of the form
\begin{equation}\label{m43}
	u(x)=C{\left(\frac{\mu}{1+\mu^{2}|x-x_0|^2}\right)}^{\frac{1}{2}}, \qquad \forall \, x\in\mathbb{R}^{2},
\end{equation}
which combined with the first equation in system \eqref{PDE} and the asymptotic behavior \eqref{22} imply
\begin{equation}\label{m44}
	v(x)=\frac{3}{2}\ln\left[\frac{C'\mu}{1+\mu^{2}|x-x_0|^2}\right], \qquad \forall \, x\in\mathbb{R}^{2}.
\end{equation}
Then, by the formula \eqref{a8} and direct calculations, we can obtain that $C=6^{\frac{1}{4}}$ and hence
\begin{equation}\label{a12}
  u(x)=6^{\frac{1}{4}}\left(\frac{\mu}{1+\mu^{2}|x-x_{0}|^{2}}\right)^{\frac{1}{2}}, \qquad \forall \, x\in\mathbb{R}^{2}.
\end{equation}
Moreover, by the asymptotic behavior \eqref{33} in Corollary \ref{cor2}, one has
\begin{equation}\label{a11}
  \beta:=\frac{1}{2\pi}\int_{\mathbb{R}^{2}}e^{v(x)}\mathrm{d}x=6^{\frac{1}{4}}\frac{1}{\sqrt{\mu}}.
\end{equation}
Combining \eqref{m44} and \eqref{a11} yields that $C'=6^{\frac{1}{6}}$ and hence
\begin{equation}\label{m45}
 v(x)=\frac{3}{2}\ln\left[\frac{6^{\frac{1}{6}}\mu}{1+\mu^2|x-x_{0}|^{2}}\right], \qquad \forall \, x\in\mathbb{R}^{2}.
\end{equation}
In addition, one can verify by calculations that $(u,v)$ given by \eqref{a12} and \eqref{m45} is indeed a pair of solutions to the IE system \eqref{IE} and hence a solution to the PDE system \eqref{PDE}. This concludes our proof of Theorem \ref{thm0}.

\section{Proof of Theorem \ref{thm1}}

In this section, we carry out our proof of Theorem \ref{thm1} and derive the classification of nonnegative classical solutions $(u,v)$ to the system \eqref{PDEH} with mixed order and Hartree type nonlocal nonlinearity in $\mathbb{R}^{3}$.	

\medskip

Through entirely similar arguments as in the proof of Lemma \ref{lem0}, by using Green's functions for $(-\Delta)^{\frac{1}{2}}$ and $-\Delta$ on balls $B_{R}(0)$, one can deduce from the Liouville theorems and maximum principles for $(-\Delta)^{\frac{1}{2}}$ and $-\Delta$ that nonnegative solution $(u,v)$ of PDE system \eqref{PDEH} also solves the following IE system
\begin{equation}\label{6-2}
\begin{cases}u(x)=\frac{1}{2 \pi^{2}} \int_{\mathbb{R}^{3}} \frac{v^{4-\sigma}(y) P(y)}{|x-y|^{2}} d y & \text { in } \mathbb{R}^{3}, \\ \\ v(x)=\frac{1}{4 \pi} \int_{\mathbb{R}^{3}} \frac{u^{\frac{5}{2}}(y)}{|x-y|} d y & \text { in } \mathbb{R}^{3},\end{cases}
\end{equation}	
where $P(y)$ is given by
\begin{equation}\label{6-3}
	P(y):=\int_{\mathbb{R}^3} \frac{v^{6-\sigma}(z)}{|y-z|^\sigma} dz, \qquad \forall \, y\in\mathbb{R}^{3}.
\end{equation}	

The proof of the integral representation formula \eqref{6-2} is quite similar to Lemma \ref{lem0}, so we omit the details here.

\smallskip

By the IE system \eqref{6-2}, it is easy to see that if $u\not\equiv0$ or $v\not\equiv0$, then we must have $u>0$ and $v>0$ in $\mathbb{R}^{3}$. Therefore, in what follows, we will consider positive solutions $(u,v)$ to the IE system \eqref{6-2} instead of the PDE system \eqref{PDEH}.

\smallskip

For arbitrary $x_0\in \mathbb{R}^3$ and $\lambda>0$, we define the Kelvin transforms for $u,v$ and $P$ by
\begin{equation}\label{6-4}
	u_{x_0,\lambda}(x)=\frac{\lambda^{2}}{|x-x_0|^2}u(x^{x_0,\lambda}),  \qquad
	v_{x_0,\lambda}(x)=\frac{\lambda}{|x-x_0|}v(x^{x_0,\lambda}), \qquad \forall \, x\in\mathbb{R}^{3}\setminus\{x_{0}\},
\end{equation}
\begin{equation}\label{6-4'}
  P_{x_0,\lambda}(x)=\frac{\lambda^{\sigma}}{|x-x_0|^\sigma}P(x^{x_0,\lambda}), \qquad \forall \, x\in\mathbb{R}^{3}\setminus\{x_{0}\},
\end{equation}
where $x^{x_0,\lambda}=\frac{\lambda^2(x-x_0)}{|x-x_0|^2}+x_0$. By \eqref{6-2}, \eqref{6-4}, \eqref{6-4'} and straightforward calculations, one can verify that
\begin{equation}\label{6-5}
	\begin{split}
		u(x)&=\frac{1}{2\pi^2}\int_{\mathbb{R}^3} \frac{P(y) v^{4-\sigma}(y)}{|x-y|^{2}}\mathrm{d}y\\
		&=\frac{1}{2\pi^2}\int_{B_\lambda(x_0)} \frac{P(y) v^{4-\sigma}(y)}{|x-y|^{2}}\mathrm{d}y+\frac{1}{2\pi^2}\int_{B_\lambda(x_0)} \frac{P(y^{x_0,\lambda})v^{4-\sigma}(y^{x_0,\lambda})}{{\left|\frac{|y-x_0|(x-x_0)}{\lambda}-\frac{\lambda (y-x_0)}{|y-x_0|}\right|}^{2}} {\left( \frac{\lambda}{|y-x_0|}\right)}^{4} \mathrm{d}y  \\
		&=\frac{1}{2\pi^2}\int_{B_\lambda(x_0)} \frac{P(y) v^{4-\sigma}(y)}{|x-y|^{2}}\mathrm{d}y+\frac{1}{2\pi^2}\int_{B_\lambda(x_0)} \frac{P_{x_0,\lambda}(y)v_{x_0,\lambda}^{4-\sigma}(y)}{{\left|\frac{|y-x_0|(x-x_0)}{\lambda}-\frac{\lambda (y-x_0)}{|y-x_0|}\right|}^{2}}\mathrm{d}y,
	\end{split}
\end{equation}
and
\begin{equation}\label{6-6}
	\begin{split}
		u_{x_0,\lambda}(x)&=\frac{\lambda^{2}}{2\pi^2|x-x_0|^2}\int_{\mathbb{R}^3} \frac{P(y) v^{4-\sigma}(y)}{|x^{x_0,\lambda}-y|^{2}}\mathrm{d}y \\
		&=\frac{1}{2\pi^2}\int_{B_\lambda(x_0)} \frac{P(y) v^{4-\sigma}(y)}{\left|\frac{\lambda (x-x_0)}{|x-x_0|}-\frac{|x-x_0|(y-x_0)}{\lambda}\right|^{2}}\mathrm{d}y \\ &\quad +\frac{1}{2\pi^2}\int_{B_\lambda(x_0)} \frac{P(y^{x_0,\lambda})v^{4-\sigma}(y^{x_0,\lambda})}{|x-y|^{2}} {\left( \frac{\lambda}{|y-x_0|}\right)}^{4} \mathrm{d}y  \\
		&=\frac{1}{2\pi^2}\int_{B_\lambda(x_0)} \frac{P(y) v^{4-\sigma}(y)}{\left|\frac{\lambda (x-x_0)}{|x-x_0|}-\frac{|x-x_0|(y-x_0)}{\lambda}\right|^{2}}\mathrm{d}y+\frac{1}{2\pi^2}\int_{B_\lambda(x_0)} \frac{P_{x_0,\lambda}(y)v_{x_0,\lambda}^{4-\sigma}(y)}{{|x-y|}^{2}}\mathrm{d}y.
	\end{split}
\end{equation}
Similarly, by direct calculations, we have
\begin{equation}\label{6-7}
	v(x)=\frac{1}{4\pi}\int_{B_\lambda(x_0)} \frac{u^{\frac{5}{2}}(y)}{|x-y|}\mathrm{d}y+\frac{1}{4\pi}\int_{B_\lambda(x_0)} \frac{u_{x_0,\lambda}^{\frac{5}{2}}(y)}{{\left|\frac{|y-x_0|(x-x_0)}{\lambda}-\frac{\lambda (y-x_0)}{|y-x_0|}\right|}}\mathrm{d}y,
\end{equation}	
and	
\begin{equation}\label{6-8}
	v_{x_0,\lambda}(x)=\frac{1}{4\pi}\int_{B_\lambda(x_0)} \frac{u^{\frac{5}{2}}(y)}{\left|\frac{\lambda (x-x_0)}{|x-x_0|}-\frac{|x-x_0|(y-x_0)}{\lambda}\right|}\mathrm{d}y+\frac{1}{4\pi}\int_{B_\lambda(x_0)} \frac{u_{x_0,\lambda}^{\frac{5}{2}}(y)}{{|x-y|}}\mathrm{d}y.
\end{equation}	

Define $\omega^u_{x_0,\lambda}(x):=u_{x_0,\lambda}(x)-u(x)$ and $\omega^v_{x_0,\lambda}(x):=v_{x_0,\lambda}(x)-v(x)$ for $x\in \mathbb{R}^{3}\setminus\{x_0\}$. The moving sphere process can be divided into two steps.

\medskip

Step 1. Start moving the sphere $S_{\lambda}(x_{0})$ from near $\lambda=0$.

\smallskip

We will show that, for $\lambda>0$ sufficiently small, there holds
\begin{equation}\label{aim}
	\omega^u_{x_0,\lambda}(x)\geq 0, \quad\, \omega^v_{x_0,\lambda}(x)\geq 0, \qquad \forall \, x\in B_\lambda(x_0)\setminus\{x_0\}.
\end{equation}
Obviously, it suffices to prove that $B_{u,\lambda}^-(x_0)=B_{v,\lambda}^-(x_0)=\emptyset$ for $\lambda>0$ small, where $B_{u,\lambda}^-(x_0):=\left\{x\in B_\lambda(x_0)\setminus\{x_0\}|\, \omega^u_{x_0,\lambda}(x)<0\right\}$ and $B_{v,\lambda}^-(x_0):=\left\{x\in B_\lambda(x_0)\setminus\{x_0\}|\, \omega^v_{x_0,\lambda}(x)<0\right\}$.

\smallskip

By \eqref{6-5} and \eqref{6-6}, for any $x\in B_\lambda(x_0)\setminus\{x_{0}\}$, we derive
\begin{equation}\label{6-9}
	\begin{split}
		&\omega^u_{x_0,\lambda}(x)=u_{x_0,\lambda}(x)-u(x)\\
		=&\frac{1}{2\pi^2}\int_{B_\lambda}\Bigg[\frac{1}{|x-y|^{2}}-\frac{1}{\left|\frac{\lambda (x-x_0)}{|x-x_0|}-\frac{|x-x_0|(y-x_0)}{\lambda}\right|^{2}}\Bigg]\left[P_{x_0,\lambda}(y)v_{x_0,\lambda}^{4-\sigma}(y)-P(y) v^{4-\sigma}(y)\right]\mathrm{d}y \\
		\geq& \frac{1}{2\pi^2}\int_{Q_\lambda} \Bigg[\frac{1}{|x-y|^{2}}-\frac{1}{\left|\frac{\lambda (x-x_0)}{|x-x_0|}-\frac{|x-x_0|(y-x_0)}{\lambda}\right|^{2}}\Bigg]\left[P_{x_0,\lambda}(y)v_{x_0,\lambda}^{4-\sigma}(y)-P(y) v^{4-\sigma}(y)\right]\mathrm{d}y \\
		=&\frac{1}{2\pi^2}\int_{Q_\lambda(x_0)} \Bigg[\frac{1}{|x-y|^{2}}-\frac{1}{\left|\frac{\lambda (x-x_0)}{|x-x_0|}-\frac{|x-x_0|(y-x_0)}{\lambda}\right|^{2}}\Bigg]P(y)\left(v_{x_0,\lambda}^{4-\sigma}(y)- v^{4-\sigma}(y)\right)\mathrm{d}y\\
		& +\frac{1}{2\pi^2}\int_{Q_\lambda(x_0)}\Bigg[\frac{1}{|x-y|^{2}}-\frac{1}{\left|\frac{\lambda (x-x_0)}{|x-x_0|}-\frac{|x-x_0|(y-x_0)}{\lambda}\right|^{2}}\Bigg]\left(P_{x_0,\lambda}(y)- P(y)\right)v_{x_0,\lambda}^{4-\sigma}(y)\mathrm{d}y,
	\end{split}
\end{equation}
where the subset $Q_\lambda(x_0)\subseteq B_\lambda(x_0)\setminus\{x_{0}\}$ is defined by
\begin{equation}\label{6-10}
	Q_\lambda(x_0):=\left\{x\in B_\lambda(x_0)\setminus\{x_{0}\}|\,P_{x_{0},\lambda}(x)v_{x_0,\lambda}^{4-\sigma}(x)<P(x)v^{4-\sigma}(x)\right\}.
\end{equation}
We have used the identity ${\left|\frac{|y-x_0|(x-x_0)}{\lambda}-\frac{\lambda (y-x_0)}{|y-x_0|}\right|}=\left|\frac{\lambda (x-x_0)}{|x-x_0|}-\frac{|x-x_0|(y-x_0)}{\lambda}\right|$ and the fact that $|x-y|<\left|\frac{\lambda (x-x_0)}{|x-x_0|}-\frac{|x-x_0|(y-x_0)}{\lambda}\right|$ for any $x,y\in B_\lambda(x_0)\setminus\{x_{0}\}$.

\smallskip

Through direct calculations, one has, for any $y\in B_\lambda(x_0)\setminus\{x_{0}\}$,
\begin{equation}\label{6-11}
	\begin{split}
		&\quad P_{x_0,\lambda}(y)-P(y)\\
		&=\int_{B_\lambda(x_0)}\Bigg[\frac{1}{|y-z|^{\sigma}}-\frac{1}{\left|\frac{\lambda (y-x_0)}{|y- x_0|}-\frac{|y-x_0|(z-x_0)}{\lambda}\right|^{\sigma}}\Bigg]\left(v_{x_0,\lambda}^{6-\sigma}(z)- v^{6-\sigma}(z)\right)dz.
	\end{split}
\end{equation}
Then, we infer from \eqref{6-9}, \eqref{6-11} and mean value theorem that, for any $x\in B_{u,\lambda}^{-}(x_0)$,
\begin{equation}\label{6-12}
	\begin{split}
		0&>\omega^u_{x_0,\lambda}(x)\\
		&\geq \frac{1}{2\pi^2}\int_{Q_\lambda(x_0)} \Bigg[\frac{1}{|x-y|^{2}}-\frac{1}{\left|\frac{\lambda (x-x_0)}{|x-x_0|}-\frac{|x-x_0|(y-x_0)}{\lambda}\right|^{2}}\Bigg]P(y)\left(v_{x_0,\lambda}^{4-\sigma}(y)-v^{4-\sigma}(y)\right)\mathrm{d}y\\
		&\quad +\frac{1}{2\pi^2}\int_{Q_\lambda(x_0)} \Bigg[\frac{1}{|x-y|^{2}}-\frac{1}{\left|\frac{\lambda (x-x_0)}{|x-x_0|}-\frac{|x-x_0|(y-x_0)}{\lambda}\right|^{2}}\Bigg]\left(P_{x_0,\lambda}(y)- P(y)\right)v_{x_0,\lambda}^{4-\sigma}(y)\mathrm{d}y\\
		&\geq \frac{4-\sigma}{2\pi^2}\int_{Q_\lambda\cap B_{v,\lambda}^-(x_0)}\Bigg[\frac{1}{|x-y|^{2}}-\frac{1}{\left|\frac{\lambda (x-x_0)}{|x-x_0|}-\frac{|x-x_0|(y-x_0)}{\lambda}\right|^{2}}\Bigg]P(y)v^{3-\sigma}(y)\omega^v_{x_0,\lambda}(y)\mathrm{d}y \\
		&\quad +\frac{6-\sigma}{2\pi^2}\int_{Q_\lambda(x_0)}\Bigg[\frac{1}{|x-y|^{2}}-\frac{1}{\left|\frac{\lambda (x-x_0)}{|x-x_0|}-\frac{|x-x_0|(y-x_0)}{\lambda}\right|^{2}}\Bigg]v_{x_0,\lambda}^{4-\sigma}(y)\\
		&\quad\quad \times \Bigg[\int_{B_{v,\lambda}^-(x_0)}\Bigg(\frac{1}{|y-z|^{\sigma}}-\frac{1}{\left|\frac{\lambda (y-x_0)}{|y-x_0|}-\frac{|y-x_0|(z-x_0)}{\lambda}\right|^{\sigma}}\Bigg)v^{5-\sigma}(z)\omega^v_{x_0,\lambda}(z)dz\Bigg]\mathrm{d}y\\
		&\geq \int_{Q_\lambda(x_0)\cap B_{v,\lambda}^-(x_0)}\frac{(4-\sigma)}{2\pi^2|x-y|^{2}}P(y)v^{3-\sigma}(y)\omega^v_{x_0,\lambda}(y)\mathrm{d}y \\
		&\quad +\int_{Q_\lambda(x_0)}\frac{(6-\sigma)v_{x_0,\lambda}^{4-\sigma}(y)}{2\pi^2|x-y|^{2}} \left[\int_{B_{v,\lambda}^-(x_0)}\frac{v^{5-\sigma}(z)\omega^v_{x_0,\lambda}(z)}{|y-z|^{\sigma}}dz\right]\mathrm{d}y
	\end{split}
\end{equation}
Similarly, by \eqref{6-7}, \eqref{6-8} and mean value theorem, one can deduce that for any $x\in B_{v,\lambda}^{-}(x_0)$,
\begin{equation}\label{6-13}
	\begin{split}
		0&>\omega^v_{x_{0},\lambda}(x)\\
		&\geq \frac{5}{8\pi}\int_{B_{\lambda}(x_0)}\Bigg[\frac{1}{|x-y|}- \frac{1}{\left|\frac{\lambda (x-x_0)}{|x-x_0|}-\frac{|x-x_0|(y-x_0)}{\lambda}\right|}\Bigg]u^{\frac{3}{2}}(y)\omega_{x_0,\lambda}^{u}(y)\mathrm{d}y \\
		&\geq \frac{5}{8\pi}\int_{B^-_{u,\lambda}(x_0)} \frac{u^{\frac{3}{2}}(y)\omega_{x_0,\lambda}^{u}(y)}{|x-y|} \mathrm{d}y.
	\end{split}
\end{equation}

Define $W_{x_0,\lambda}(y):=\int_{B_{v,\lambda}^-(x_0)}\frac{v^{5-\sigma}(z)\omega^v_{x_0,\lambda}(z)}{|y-z|^{\sigma}}dz$ for any $y\in Q_{\lambda}(x_{0})$. By \eqref{6-12}, \eqref{6-13}, Hardy-Littlewood-Sobolev inequality and H\"{o}lder inequality, we have, for any $q>\max\left\{3,\frac{3}{\sigma}\right\}$, it holds that
\begin{equation}\label{6-15}
	\begin{split}
		&\quad \|\omega^u_{x_0,\lambda}\|_{L^{q}(B_{u,\lambda}^-(x_0))} \\
		&\leq C{\left\|P(y)v^{3-\sigma}(y)\omega^v_{x_0,\lambda}(y)\right\|}_{L^{\frac{3q}{3+q}}\left(Q_{\lambda}(x_{0})\cap B_{v,\lambda}^-(x_0)\right)}+C{\left\|W_{x_0,\lambda} v_{x_0,\lambda}^{4-\sigma}\right\|}_{L^{\frac{3q}{3+q}}(Q_\lambda(x_0))}\\
		&\leq C{\left\|P v^{3-\sigma}\right\|}_{L^{3}( B_{v,\lambda}^-(x_0)) } {\left\|\omega^v_{x_0,\lambda}\right\|}_{L^{q}( B_{v,\lambda}^-(x_0))}+C{\left\|v_{x_0,\lambda}^{4-\sigma}\right\|}_{L^{3}(Q_\lambda(x_0))} {\left\|W_{x_0,\lambda} \right\|}_{L^{q}(Q_\lambda(x_0))}\\
		&\leq C{\left\|P v^{3-\sigma}\right\|}_{L^{3}( B_{v,\lambda}^-(x_0)) } {\left\|\omega^v_{x_0,\lambda}\right\|}_{L^{q}( B_{v,\lambda}^{-})}+C{\left\|v_{x_0,\lambda}^{4-\sigma}\right\|}_{L^{3}(Q_\lambda(x_0))} {\left\|v^{5-\sigma} \omega^v_{x_0,\lambda} \right\|}_{L^{\frac{3q}{3+(3-\sigma)q}}(B_{v,\lambda}^{-})}\\
		&\leq C{\left\|P v^{3-\sigma}\right\|}_{L^{3}( B_{v,\lambda}^-(x_0)) } {\left\|\omega^v_{x_0,\lambda}\right\|}_{L^{q}( B_{v,\lambda}^-(x_0))} \\
&\quad +C{\left\|v_{x_0,\lambda}^{4-\sigma}\right\|}_{L^{3}(Q_\lambda(x_0))} {\left\|v^{5-\sigma} \right\|}_{L^{\frac{3}{3-\sigma}}(B_{v,\lambda}^-(x_0))} {\left\|\omega^v_{x_0,\lambda} \right\|}_{L^{q}(B_{v,\lambda}^-(x_0))}\\
		&\leq C_1\left({\left\|P v^{3-\sigma}\right\|}_{L^{3}( B_{v,\lambda}^-(x_0)) } +{\left\|v_{x_0,\lambda}^{4-\sigma}\right\|}_{L^{3}( Q_\lambda(x_0)) } {\left\|v^{5-\sigma} \right\|}_{L^{\frac{3}{3-\sigma}}(B_{v,\lambda}^-(x_0))}\right){\left\|\omega^v_{x_0,\lambda}\right\|}_{L^{q}( B_{v,\lambda}^-(x_0))},
	\end{split}
\end{equation}
and
\begin{equation}\label{6-16}
	\begin{split}
		\|\omega^v_{x_0,\lambda}\|_{L^{q}(B_{v,\lambda}^-(x_0))} \leq C{\left\|u^{\frac{3}{2}}\omega_{x_0,\lambda} ^u\right\|}_{L^{\frac{3q}{3+2q}}( B_{u,\lambda}^-(x_0))}\leq C_2{\left\|u^{\frac{3}{2}}\right\|}_{L^{\frac{3}{2}}(B_{u,\lambda}^-(x_0))}{\left\|\omega^u_{x_0,\lambda} \right\|}_{L^{q}( B_{u,\lambda}^-(x_0))}.
	\end{split}
\end{equation}

In order to continue, we need the upper bound of $v_{x_0,\lambda}^{4-\sigma}$ in $ Q_\lambda(x_0)$, which will be given by the lower bound of $P_{x_0,\lambda}$ in view of the definition of $ Q_\lambda(x_0)$. From the definition of $P(y)$, one can get the following lower bound:
\begin{equation}\label{6-17}
	P(y)\geq \frac{C}{|y-x_{0}|^{\sigma}}\int_{|z-x_{0}|\leq\frac{1}{2}}v^{6-\sigma}(z)dz=:\frac{C}{|y-x_{0}|^{\sigma}}, \qquad \forall \,\, |y-x_{0}|\geq1,
\end{equation}
where $C>0$ is a positive constant depending on $x_0$ and $v$. Thanks to  \eqref{6-17}, we can obtain the following lower bounds for $P_{x_0,\lambda}$ in $B_\lambda(x_0)\setminus\{x_0\}$. We first consider $0<\lambda<1$. Note that if $0<|y-x_0|\leq\lambda^2$, then $|y^{x_{0},\lambda}-x_0|\geq1$. Then we infer from \eqref{6-17} that
\begin{equation}\label{6-18}
	P_{x_0,\lambda}(y)={\left(\frac{\lambda}{|y-x_0|}\right)}^{\sigma}P(y^{x_0,\lambda})\geq \frac{C}{\lambda^{\sigma}}, \qquad \forall \,\, 0<|y-x_0|\leq\lambda^2.
\end{equation}
Now suppose $\lambda^2\leq|y-x_0|<\lambda$, then $|y^{x_{0},\lambda}-x_0|\leq1$, and we have
\begin{equation}\label{6-19}
	P_{x_0,\lambda}(y)={\left(\frac{\lambda}{|y-x_0|}\right)}^{\sigma}P(y^{x_0,\lambda})\geq\left(\min_{\overline{B_1(x_0)}} P\right){\left(\frac{\lambda}{|y-x_0|}\right)}^{\sigma}\geq\min_{\overline{B_1(x_0)}}P=:C>0.
\end{equation}
For $\lambda\geq1$, it holds $|y^{x_{0},\lambda}-x_0|>\lambda\geq1$ for any $y\in B_\lambda(x_0)\setminus\{x_0\}$, and hence \eqref{6-17} yields
\begin{equation}\label{6-20}
	P_{x_0,\lambda}(y)={\left(\frac{\lambda}{|y-x_0|}\right)}^{\sigma}P(y^{x_{0},\lambda})\geq\frac{C}{\lambda^{\sigma}}, \qquad \forall \,\, y\in B_\lambda(x_0)\setminus\{x_0\}.
\end{equation}

By the definition of $Q_{\lambda}(x_{0})$, the continuity of $v$ and $P$ in $\mathbb{R}^{3}$, the lower bound estimates \eqref{6-18}, \eqref{6-19} and \eqref{6-20}, we obtain the following upper bounds: for any $x\in Q_{\lambda}(x_{0})$,
\begin{equation}\label{6-21}
	v_{x_0,\lambda}^{4-\sigma}(x)<\frac{P(x)v^{4-\sigma}(x)}{P_{x_{0},\lambda}(x)}\leq C(1+\lambda^{\sigma})\left(\max_{\overline{B_{1}(x_{0})}}Pv^{4-\sigma}\right)=:C(1+\lambda^{\sigma}), \qquad \text{if} \,\, 0<\lambda<1;
\end{equation}
\begin{equation}\label{6-22}
	v_{x_0,\lambda}^{4-\sigma}(x)<\frac{P(x)v^{4-\sigma}(x)}{P_{x_{0},\lambda}(x)}\leq C(1+\lambda^{\sigma})\left(\max_{\overline{B_{\lambda}(x_{0})}}Pv^{4-\sigma}\right)=:C_{\lambda}(1+\lambda^{\sigma}), \qquad \text{if} \,\, \lambda\geq1.
\end{equation}
From the local boundedness of $u$, $v$ and $P$ in $\mathbb{R}^{3}$, the upper bound \eqref{6-21} of $v_{x_0,\lambda}^{4-\sigma}$ in $Q_{\lambda}(x_{0})$, we deduce that
\begin{equation}\label{6-23}
	\left({\left\|P v^{3-\sigma}\right\|}_{L^{3}( B_{v,\lambda}^-(x_0)) } +{\left\|v_{x_0,\lambda}^{4-\sigma}\right\|}_{L^{3}( Q_\lambda(x_0)) } {\left\|v^{5-\sigma} \right\|}_{L^{\frac{3}{3-\sigma}}(B_{v,\lambda}^-(x_0))}\right){\left\|u^{\frac{3}{2}}\right\|}_{L^{\frac{3}{2}}(B_{u,\lambda}^-(x_0))}\rightarrow 0,
\end{equation}
as $\lambda\rightarrow0$. Therefore, there exists $0<\epsilon_0<1$ small enough such that, for all $0<\lambda<\epsilon_0$,
\begin{equation}\label{6-24}
	C_1 C_2\left[{\left\|P v^{3-\sigma}\right\|}_{L^{3}( B_{v,\lambda}^-(x_0)) } +{\left\|v_{x_0,\lambda}^{4-\sigma}\right\|}_{L^{3}( Q_\lambda(x_0)) } {\left\|v^{5-\sigma} \right\|}_{L^{\frac{3}{3-\sigma}}(B_{v,\lambda}^{-})}\right]{\left\|u^{\frac{3}{2}}\right\|}_{L^{\frac{3}{2}}(B_{u,\lambda}^{-})}<\frac{1}{2},
\end{equation}
where $C_1$ and $C_2$ are the constants in the last inequality of \eqref{6-15} and \eqref{6-16} respectively. Immediately, we conclude from \eqref{6-15} and \eqref{6-16} that, for any $0<\lambda<\epsilon_0$,
\begin{equation}\label{a14}
  \|\omega^u_{x_0,\lambda}\|_{L^{q}(B_{u,\lambda}^{-}(x_0))}=\|\omega^v_{x_0,\lambda}\|_{L^{q}(B_{v,\lambda}^{-}(x_0))}=0,
\end{equation}
which implies that both $B_{u,\lambda}^-(x_0)$ and $B_{v,\lambda}^-(x_0)$ have measure $0$. Then,  due to the continuity of $\omega^u_{x_0,\lambda}$ and $\omega^v_{x_0,\lambda}$ in $B_\lambda(x_0)\setminus\{x_0\}$, we must have $B_{u,\lambda}^-(x_0)=B_{v,\lambda}^-(x_0)=\emptyset$ for any $0<\lambda<\epsilon_0$. Hence we have derived \eqref{aim} for all $0<\lambda<\epsilon_0$.

\medskip

Step 2. Moving the sphere $S_{\lambda}$ outward until the limiting position.

\smallskip

Step 1 provides a starting point to carry out the method of moving spheres for arbitrarily given center $x_0\in \mathbb{R}^{3}$. Next, we will continuously increase the radius $\lambda$ as long as \eqref{aim} holds. For arbitrarily given center $x_0$, the critical scale $\lambda_{x_0}$ is defined by
\begin{equation}\label{6-25}
	\lambda_{x_0}:=\sup\left\{\lambda>0\mid\,\omega^u_{x_0,\mu}\geq 0 \,\, \text{and}\,\,\omega^v_{x_0,\mu}\geq 0 \,\,\text{in} \,\, B_\mu(x_0)\setminus\{x_0\}, \,\, \forall \,0<\mu\leq\lambda\right\}.
\end{equation}
Then it follows from Step 1 that $0<\lambda_{x_{0}}\leq+\infty$ for any $x_{0}\in\mathbb{R}^{3}$.

\smallskip

It seems that the critical scale $\lambda_{x_0}$ may depend on the center $x_0$. However, we have the following crucial Lemma on the synchronism of $\lambda_{x_{0}}$ with respect to $x_{0}$.
\begin{lem} \label{lem6-1}
One of the following two assertions holds, that is, either\\
\emph{(A)} For every $x_0\in\mathbb{R}^{3}$, the corresponding critical scale $0<\lambda_{x_0}<+\infty$, \\ or \\
\emph{(B)} For every $x_0\in\mathbb{R}^{3}$, the corresponding critical scale $\lambda_{x_0}=+\infty$.
\end{lem}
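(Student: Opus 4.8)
The plan is to argue by contradiction, exploiting the involution property of the Kelvin transform to turn the two forbidden scenarios into two incompatible decay rates for $u$ at infinity. Assume the dichotomy fails; then (recall from Step 1 that $\lambda_{x_0}>0$ for every $x_0$) there are a center $x_1$ with $0<\lambda_{x_1}<+\infty$ and a center $x_2$ with $\lambda_{x_2}=+\infty$. From $x_2$ I will extract a lower bound $u(x)\geq c|x-x_2|^{-1}$ at infinity, and from $x_1$ an upper bound $u(x)\leq C|x-x_1|^{-2}$ at infinity; these clash for $|x|$ large, which is the desired contradiction.

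\textbf{Lower bound from $\lambda_{x_2}=+\infty$.} Since $\lambda_{x_2}=+\infty$, we have $\omega^u_{x_2,\mu}\geq 0$ and $\omega^v_{x_2,\mu}\geq 0$ in $B_\mu(x_2)\setminus\{x_2\}$ for \emph{every} $\mu>0$; applying the Kelvin transform once more (it is an involution) this is equivalent to $u_{x_2,\mu}(x)\leq u(x)$ for all $|x-x_2|>\mu$. Fix $|x-x_2|=R>1$ and choose $\mu:=\sqrt R\in(1,R)$, so that the reflected point $x^{x_2,\mu}$ lies on the unit sphere about $x_2$, i.e. $|x^{x_2,\mu}-x_2|=\mu^2/R=1$. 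Then $u(x)\geq \frac{\mu^2}{R^2}u(x^{x_2,\mu})\geq \frac1R\min_{|\xi-x_2|=1}u(\xi)=:\frac{c}{|x-x_2|}$, with $c>0$ by positivity and continuity of $u$; likewise $v(x)\geq c'|x-x_2|^{-1/2}$ at infinity, though only the bound on $u$ is needed.

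\textbf{Upper bound from $\lambda_{x_1}<+\infty$.} By the definition \eqref{6-25} of $\lambda_{x_1}$ as a supremum and continuity, $\omega^u_{x_1,\lambda_{x_1}}\geq 0$ and $\omega^v_{x_1,\lambda_{x_1}}\geq 0$ in $B_{\lambda_{x_1}}(x_1)\setminus\{x_1\}$. Using the integral identities \eqref{6-5}--\eqref{6-8}, \eqref{6-11} and the strict positivity of the kernels $\frac{1}{|x-y|^2}-\frac{1}{|\cdots|^2}$ and $\frac{1}{|x-y|}-\frac{1}{|\cdots|}$ on $B_{\lambda_{x_1}}(x_1)$, one checks that either $\omega^u_{x_1,\lambda_{x_1}}\equiv 0$ on $B_{\lambda_{x_1}}(x_1)$ (which then forces $\omega^v_{x_1,\lambda_{x_1}}\equiv 0$ and $\omega^P_{x_1,\lambda_{x_1}}\equiv 0$ there), or $\omega^u_{x_1,\lambda_{x_1}}$ and $\omega^v_{x_1,\lambda_{x_1}}$ are both strictly positive throughout $B_{\lambda_{x_1}}(x_1)\setminus\{x_1\}$. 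In the latter case I would run the narrow-region moving-sphere argument exactly as in Step 2 of Section 2: $\omega^u_{x_1,\lambda_{x_1}},\omega^v_{x_1,\lambda_{x_1}}$ are bounded below by a positive constant on the compact core $\{\delta_1\le|x-x_1|\le\lambda_{x_1}-\delta_1\}$, so by continuity, for $\lambda\in(\lambda_{x_1},\lambda_{x_1}+\delta_2]$ the sets $B^-_{u,\lambda}(x_1)\cup B^-_{v,\lambda}(x_1)$ are confined to the thin collar near $x_1$ and near $\partial B_\lambda(x_1)$, where — using local boundedness of $u,v,P$, the lower bounds \eqref{6-17}--\eqref{6-22} for $P_{x_1,\lambda}$, and the smallness of the collar's measure — the Hardy--Littlewood--Sobolev and H\"older constant in \eqref{6-15}--\eqref{6-16} drops below $\frac12$; this forces $B^-_{u,\lambda}(x_1)=B^-_{v,\lambda}(x_1)=\emptyset$, contradicting the maximality of $\lambda_{x_1}$. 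Hence $\omega^u_{x_1,\lambda_{x_1}}\equiv 0$ on $B_{\lambda_{x_1}}(x_1)$, and by the involution $u_{x_1,\lambda_{x_1}}\equiv u$ on $\mathbb{R}^3\setminus\{x_1\}$. Letting $|x|\to+\infty$ in $u(x)=\frac{\lambda_{x_1}^2}{|x-x_1|^2}u(x^{x_1,\lambda_{x_1}})$ and using $x^{x_1,\lambda_{x_1}}\to x_1$ together with continuity and positivity of $u$, we obtain $u(x)=(1+o(1))\lambda_{x_1}^2u(x_1)|x-x_1|^{-2}$, so $u(x)\leq C|x-x_1|^{-2}$ at infinity.

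\textbf{Conclusion and main obstacle.} Combining the two estimates, $c|x-x_2|^{-1}\leq u(x)\leq C|x-x_1|^{-2}$ for all $|x|$ large, i.e. $|x-x_1|^2\leq (C/c)|x-x_2|$, which is absurd as $|x|\to+\infty$. Therefore the assumed coexistence is impossible, so either every $\lambda_{x_0}$ is finite (case (A)) or every $\lambda_{x_0}=+\infty$ (case (B)), which is the assertion of Lemma \ref{lem6-1}. The step I expect to be the main obstacle is proving $u_{x_1,\lambda_{x_1}}\equiv u$ when $\lambda_{x_1}<+\infty$, that is, ruling out the ``strictly positive but not movable'' alternative: this needs the strong-maximum-principle features of the IE system \eqref{6-2} — a single interior zero of $\omega^u$ or $\omega^v$ propagating through the coupled integral formulas \eqref{6-9}, \eqref{6-11}, \eqref{6-13} into $\omega^v\equiv 0$ and $\omega^P\equiv 0$ — together with the narrow-region contraction, taking care that all Kelvin-transformed quantities and the bounds \eqref{6-17}--\eqref{6-22} remain uniformly controlled while $\lambda$ ranges over a bounded neighborhood of $\lambda_{x_1}$.
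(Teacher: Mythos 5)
Your proof is correct and takes essentially the same route as the paper's: you compare the exact $|x|^{-2}$ decay of $u$ forced by a finite critical scale (via the identity $u_{x_1,\lambda_{x_1}}\equiv u$, which the paper isolates as Proposition~\ref{prop6-2}) against the strictly slower decay forced by an infinite critical scale, and derive the same contradiction. The only differences are organizational and cosmetic — you reprove Proposition~\ref{prop6-2} inline rather than citing it, and you phrase the lower bound as $u(x)\geq c|x-x_2|^{-1}$ instead of the paper's $\lim_{|x|\to\infty}|x|^2u(x)=+\infty$, which are equivalent for the purposes of the argument.
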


In order to prove Lemma \ref{lem6-1}, we need the following proposition.
\begin{prop}\label{prop6-2}
	If $\lambda_{x_0}<+\infty$, then $u_{x_0,\lambda_{x_0}}(x)\equiv u(x)$ and $v_{x_0,\lambda_{x_0}}(x)\equiv v(x)$ for any $x\in B_{\lambda_{x_0}}(x_0) \setminus\{x_0\}$.
\end{prop}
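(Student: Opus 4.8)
The plan is to argue by contradiction. Suppose $\lambda_{x_0}<+\infty$ but that $\omega^u_{x_0,\lambda_{x_0}}\not\equiv 0$ or $\omega^v_{x_0,\lambda_{x_0}}\not\equiv 0$ in $B_{\lambda_{x_0}}(x_0)\setminus\{x_0\}$. Since, by the definition of $\lambda_{x_0}$ and continuity, we already have $\omega^u_{x_0,\lambda_{x_0}}\geq 0$ and $\omega^v_{x_0,\lambda_{x_0}}\geq 0$ in $B_{\lambda_{x_0}}(x_0)\setminus\{x_0\}$, the first step is to upgrade this to strict positivity: I would feed the nonnegativity back into the integral identities \eqref{6-9}, \eqref{6-11}, \eqref{6-12} and \eqref{6-13}. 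Concretely, if say $\omega^v_{x_0,\lambda_{x_0}}$ is not identically zero, then the right-hand side of the $u$-identity picks up a strictly positive contribution (the kernel $\frac{1}{|x-y|^2}-\frac{1}{|\cdots|^2}$ is strictly positive for $x,y\in B_{\lambda_{x_0}}(x_0)\setminus\{x_0\}$ and $P>0$), forcing $\omega^u_{x_0,\lambda_{x_0}}>0$ in all of $B_{\lambda_{x_0}}(x_0)\setminus\{x_0\}$; then plugging that into the $v$-identity \eqref{6-13} forces $\omega^v_{x_0,\lambda_{x_0}}>0$ there as well. A symmetric bootstrap handles the case where it is $\omega^u$ that is nontrivial. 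Thus one reduces to the situation $\omega^u_{x_0,\lambda_{x_0}}>0$ and $\omega^v_{x_0,\lambda_{x_0}}>0$ strictly on $B_{\lambda_{x_0}}(x_0)\setminus\{x_0\}$.

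Next I would run the ``moving the sphere a bit further'' argument, exactly as in Step 2 of the proof of Theorem \ref{thm0} (Case (ii) there). Pick $\delta_1>0$ small and set the narrow region $A_{\delta_1}:=\{x\mid 0<|x-x_0|<\delta_1 \text{ or } \lambda_{x_0}-\delta_1<|x-x_0|<\lambda_{x_0}\}$. On the compact complement $A_{\delta_1}^c=(B_{\lambda_{x_0}}(x_0)\setminus\{x_0\})\setminus A_{\delta_1}$ the strict inequalities give $\omega^u_{x_0,\lambda_{x_0}}, \omega^v_{x_0,\lambda_{x_0}}\geq C_1>0$; by continuity in $\lambda$ these persist (with constant $C_1/2$) for $\lambda\in[\lambda_{x_0},\lambda_{x_0}+\delta_2]$ with $\delta_2$ small, so that the ``bad'' sets $B_{u,\lambda}^-(x_0)\cup B_{v,\lambda}^-(x_0)$ are contained in the narrow region $\{0<|x-x_0|<\delta_1\}\cup\{\lambda_{x_0}-\delta_1<|x-x_0|<\lambda\}$. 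Then I invoke the Hardy--Littlewood--Sobolev estimates \eqref{6-15} and \eqref{6-16}, now over these narrow sets; using the local boundedness of $u$, $v$, $P$ together with the upper bounds \eqref{6-21}, \eqref{6-22} for $v_{x_0,\lambda}^{4-\sigma}$ on $Q_\lambda(x_0)$ (which are uniform for $\lambda$ in the compact interval $[\lambda_{x_0},\lambda_{x_0}+1]$), the relevant product of norms can be made smaller than $\frac12$ by shrinking $\delta_1,\delta_2$. This yields $\|\omega^u_{x_0,\lambda}\|_{L^q(B_{u,\lambda}^-(x_0))}=\|\omega^v_{x_0,\lambda}\|_{L^q(B_{v,\lambda}^-(x_0))}=0$, hence $B_{u,\lambda}^-(x_0)=B_{v,\lambda}^-(x_0)=\emptyset$ for all $\lambda\in[\lambda_{x_0},\lambda_{x_0}+\delta_2]$, contradicting the maximality of $\lambda_{x_0}$.

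Therefore no such $\lambda$ with a nontrivial $\omega$ exists, i.e. $\omega^u_{x_0,\lambda_{x_0}}\equiv 0$ and $\omega^v_{x_0,\lambda_{x_0}}\equiv 0$ on $B_{\lambda_{x_0}}(x_0)\setminus\{x_0\}$, which is precisely $u_{x_0,\lambda_{x_0}}\equiv u$ and $v_{x_0,\lambda_{x_0}}\equiv v$ there; by the conformal invariance of the Kelvin transform (or just continuity of both sides) this identity in fact extends to all of $\mathbb{R}^3\setminus\{x_0\}$. The main obstacle I anticipate is the strict-positivity bootstrap in the first paragraph: one must be careful that the positive integral contributions are genuinely nonzero, which relies on the strict positivity of the kernels on $B_{\lambda_{x_0}}(x_0)\setminus\{x_0\}$, on $P>0$ and $v>0$ (already known once $u\not\equiv 0$), and on the fact that $Q_\lambda(x_0)$ together with $B^-_{v,\lambda}$ cannot be negligible when $\omega^v_{x_0,\lambda_{x_0}}$ is nontrivial — here one argues by contraposition: if $\omega^v_{x_0,\lambda_{x_0}}(x_1)>0$ at some point, continuity gives positivity on a ball, and the $u$-identity then propagates it; the delicate bookkeeping is in making sure that when $\omega^v\equiv 0$ but $\omega^u\not\equiv 0$ one still gets a contradiction, which follows by swapping the roles of the two equations via \eqref{6-13}. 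Everything else is a direct transcription of the $\mathbb{R}^2$ argument already carried out in Section 2.
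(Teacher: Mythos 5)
Your proposal is correct and follows essentially the same route as the paper: the authors likewise argue by contradiction, first bootstrap the nonnegativity to strict positivity on $B_{\lambda_{x_0}}(x_0)\setminus\{x_0\}$ (they start from $\omega^u_{x_0,\lambda_{x_0}}\not\equiv 0$ and use the $v$-identity first, whereas you start from $\omega^v$ and use the $u$-identity first, but the two orderings are equivalent), then introduce the narrow annular region $A_{\eta_1}$, use continuity in $\lambda$ to confine the negative sets there, and close the argument with the HLS/H\"older contraction estimates \eqref{6-15}--\eqref{6-16} together with the upper bounds \eqref{6-21}--\eqref{6-22}. The only small point worth being explicit about is that vanishing of the $L^q$ norms on $B^-_{u,\lambda}$ and $B^-_{v,\lambda}$ gives that these sets have measure zero, and emptiness then follows since they are open by continuity of $\omega^u_{x_0,\lambda}$, $\omega^v_{x_0,\lambda}$.
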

\begin{proof}[Proof of Proposition \ref{prop6-2}]
We will prove Proposition \ref{prop6-2} by contradiction arguments. By the definition of $\lambda_{x_0}$, we know that $\omega^u_{x_{0},\lambda_{x_0}}\geq 0$ and $\omega^v_{x_{0},\lambda_{x_0}}\geq 0$ in $B_{\lambda_{x_0}}(x_0)\setminus\{x_0\}$.

Suppose on the contrary that Proposition \ref{prop6-2} is false. We first assume that $\omega^u_{x_0, \lambda_{x_0}}\not\equiv 0$ in $B_{\lambda_{x_0}}(x_0) \setminus\{x_0\}$, then there exists $\bar{x}\in B_{\lambda_{x_0}}(x_0) \setminus\{x_0\}$ and $\delta>0$ such that $B_\delta(\bar x)\subset B_{\lambda_{x_0}}(x_0) \setminus\{x_0\}$ and $\omega^u_{x_{0},\lambda_{x_0}}>C>0$ in $B_\delta(\bar x)$. Then, we derive by \eqref{6-13} that, for any $x\in B_{\lambda_{x_0}}(x_0)\setminus\{x_0\}$,
	\begin{equation}\label{6-26}
		\begin{split}
			\omega^v_{x_0, \lambda_{x_0}}(x)&\geq \frac{5}{8\pi}\int_{B_{\lambda_{x_0}}(x_0)}\left( \frac{1}{|x-y|}- \frac{1}{\left|\frac{\lambda_{x_0} (x-x_0)}{|x-x_0|}-\frac{|x-x_0|(y-x_0)}{\lambda_{x_0}}\right|}\right)u^{\frac{3}{2}}(y)\omega_{x_0, \lambda_{x_0}}^u(y)\mathrm{d}y \\
			&\geq \frac{5 }{8\pi}\int_{B_{\delta}(\bar x)}\left( \frac{1}{|x-y|}- \frac{1}{\left|\frac{\lambda_{x_0} (x-x_0)}{|x-x_0|}-\frac{|x-x_0|(y-x_0)}{\lambda_{x_0}}\right|}\right)u^{\frac{3}{2}}(y)\omega_{x_0, \lambda_{x_0}}^u(y)\mathrm{d}y \\
			&>0.
		\end{split}
	\end{equation}
	Combining this with \eqref{6-11} and \eqref{6-12} imply that
	\begin{equation}\label{6-27}
		\omega^u_{x_0, \lambda_{x_0}}(x)>0, \qquad \forall \, x\in B_{\lambda_{x_0}}(x_0)\setminus\{x_0\}.
	\end{equation}
	Similarly, if $\omega^v_{x_0,\lambda_{x_0}}\not\equiv 0$ one can also deduce that  from  \eqref{6-11}, \eqref{6-12} and \eqref{6-13} that
\begin{equation}\label{a15}
  \omega^u_{x_0, \lambda_{x_0}}(x)>0, \quad\, \omega^v_{x_0, \lambda_{x_0}}(x)>0, \qquad \forall \, x\in B_{\lambda_{x_0}}(x_0)\setminus\{x_0\}.
\end{equation}

	Let $\eta_1>0$ be sufficiently small, which will be determined later. Define the narrow region
	\begin{equation}\label{6-28}
		A_{\eta_1}:=\{x\in \mathbb{R}^3 \,\mid\,0<|x-x_0|<\eta_1 \,\,\, \text{or} \,\,\, \lambda_{x_0}-\eta_1<|x-x_0|<\lambda_{x_0}\}.
	\end{equation}	
	Since $\omega^u_{x_{0},\lambda_{x_0}}$ and $\omega^v_{x_{0},\lambda_{x_0}}$ are continuous in $\mathbb{R}^{3}\setminus\{x_0\}$ and $A_{\eta_1}^c:=\left(B_{\lambda_{x_0}}(x_0)\setminus\{x_{0}\}\right)\setminus A_{\eta_1}$ is a compact subset, there exists a $C_0>0$ such that
	\begin{equation}\label{4-11}
		\omega^u_{x_0,\lambda_{x_0}}(x)>C_0, \,\quad \omega^v_{x_0, \lambda_{x_0}}(x)>C_0, \quad\quad \forall \,\, x\in A_{\eta_1}^c.
	\end{equation}
	By continuity, we can choose $\eta_2>0$ (depending on $\eta_{1}$) sufficiently small such that, for any $\lambda\in[\lambda_{x_0},\,\lambda_{x_0}+\eta_2]$,
	\begin{equation}\label{6-29}
		\omega^u_{x_0,\lambda}(x)>\frac{C_0}{2}, \quad\, \omega^v_{x_0,\lambda}(x)>\frac{C_0}{2},\quad\quad \forall \,\, x\in A_{\eta_1}^c.
	\end{equation}
	Hence we must have, for any $\lambda\in [\lambda_{x_{0}},\,\lambda_{x_{0}}+\eta_2]$,
	\begin{equation}\label{6-30}
\begin{aligned}
		B_{u,\lambda}^-(x_0)\cup B_{v,\lambda}^-(x_0)&\subset \left(B_{\lambda}(x_0)\setminus\{x_{0}\}\right)\setminus A^{c}_{\eta_1} \\
&=\left\{x\in\mathbb{R}^{3}\,\mid\,0<|x-x_0|<\eta_1 \,\, \text{or} \,\, \lambda_{x_0}-\eta_1<|x-x_0|<\lambda\right\}.
\end{aligned}
	\end{equation}
	By \eqref{6-30}, the continuity of $u$, $v$ and $P$, the the upper bounds \eqref{6-21} and \eqref{6-22} of $v_{x_{0},\lambda}^{4-\sigma}$ in $B_{\lambda}(x_0)\setminus\{x_0\}$, we can choose $\eta_1$ sufficiently small (and $\eta_2$ more smaller if necessary) such that, for any $\lambda\in[\lambda_{x_{0}},\,\lambda_{x_{0}}+\eta_2]$,
	\begin{equation}\label{6-31}
		C_1 C_2\left[{\left\|P v^{3-\sigma}\right\|}_{L^{3}( B_{v,\lambda}^-(x_0)) } +{\left\|v_{x_0,\lambda}^{4-\sigma}\right\|}_{L^{3}( Q_\lambda(x_0)) } {\left\|v^{5-\sigma} \right\|}_{L^{\frac{3}{3-\sigma}}(B_{v,\lambda}^{-})}\right]{\left\|u^{\frac{3}{2}}\right\|}_{L^{\frac{3}{2}}(B_{u,\lambda}^-(x_0))}<\frac{1}{2},
	\end{equation}
	where $C_1$ and $C_2$ are the constants in the last inequality of \eqref{6-15} and \eqref{6-16} respectively. Immediately, we conclude from \eqref{6-15} and \eqref{6-16} that
\begin{equation}\label{a16}
  \|\omega^u_{x_0, \lambda}\|_{L^{q}(B_{u,\lambda}^{-}(x_0))}=\|\omega^v_{x_0, \lambda}\|_{L^{q}(B_{v,\lambda}^{-}(x_0))}=0,
\end{equation}
and hence $B_{u, \lambda}^-(x_0)=B_{v, \lambda}^-(x_0)=\emptyset$ for any $\lambda\in[\lambda_{x_{0}},\,\lambda_{x_{0}}+\eta_2]$. Consequently, we obtain that, for any $\lambda\in[\lambda_{x_{0}},\,\lambda_{x_{0}}+\eta_2]$,
	\begin{equation}\label{6-32}
		\omega^u_{x_0, \lambda}(x)\geq 0, \quad\, \omega^v_{x_0,\lambda}(x)\geq 0, \quad\quad \forall \,\, x\in B_{\lambda}(x_0)\setminus\{x_0\},
	\end{equation}
	which contradicts the definition of the critical scale $\lambda_{x_{0}}$. This completes our proof of Proposition \ref{prop6-2}.	
\end{proof}

Now we are ready to prove Lemma \ref{lem6-1}.
\begin{proof}[Proof of Lemma \ref{lem6-1}]
	 If there exists a $x_0\in \mathbb{R}^{3}$ such that $\lambda_{x_0}=+\infty$, we have, for any $\lambda>0$,
	\begin{equation}\label{6-33}
		\omega^{u}_{x_0, \lambda}(x)\geq 0, \qquad \forall \,\, x\in B_\lambda(x_0)\setminus\{x_0\},
	\end{equation}
	which implies that, for any $\lambda>0$,
	\begin{equation}\label{6-34}
		u(x)\geq u_{x_{0},\lambda}(x), \qquad \forall \,\, |x-x_0|>\lambda.
	\end{equation}
	Then, due to the arbitrariness of $\lambda>0$, \eqref{6-34} yields that
	\begin{equation}\label{6-35}
		\lim_{|x|\rightarrow+\infty}|x|^{2}u(x)=+\infty.
	\end{equation}
	However, if we assume there exists another point $z_0\in \mathbb{R}^{3}$ such that $\lambda_{z_0}<+\infty$, then by Proposition \ref{prop6-2}, we have
	\begin{equation}\label{6-36}
		u_{z_{0},\lambda_{z_0}}(x)=u(x), \qquad \forall \,\, x\in\mathbb{R}^3\setminus\{z_0\}.
	\end{equation}
	The above identity immediately implies that
	\begin{equation}\label{6-37}
		\lim_{|x|\rightarrow+\infty}|x|^{2}u(x)=\lambda_{z_{0}}^{2}u(z_{0})<+\infty,
	\end{equation}
	which contradicts \eqref{6-35}. This finishes our proof of Lemma \ref{lem6-1}.
\end{proof}

Now we are to complete our proof of Theorem \ref{thm1}. Suppose that, for every $x_0\in\mathbb{R}^{3}$, the critical scale $\lambda_{x_0}=+\infty$. Then, by conclusion (ii) in Lemma \ref{lem10} and the integrability $\int_{\mathbb{R}^{3}}\frac{v^{6-\sigma}(x)}{|x|^{\sigma}}\mathrm{d}y<+\infty$, we conclude that $v\equiv 0$ and hence $u\equiv 0$, which contradicts $u>0$ and $v>0$ in $\mathbb{R}^{3}$. Therefore, we must have, for every $x_0\in\mathbb{R}^3$, the critical scale $\lambda_{x_0}<+\infty$. Then by Proposition \ref{prop6-2} and conclusion (i) in Lemma \ref{lem10}, we have
\begin{equation}\label{6-38}
	u(x)=C\frac{\mu}{1+\mu^{2}|x-\bar{x}|^2}, \qquad 	v(x)=C'{\left(\frac{\mu}{1+\mu^{2}|x-\bar{x}|^2}\right)}^{\frac{1}{2}}
\end{equation}
for some $C$, $C'$, $\mu>0$ and $\bar{x}\in\mathbb{R}^3$. Consequently, we have proved  that any nontrivial nonnegative solutions $(u,v)$ to IE system \eqref{6-2} and PDE system \eqref{PDEH} must have the form \eqref{6-38} for some $C>0$, $C'>0$, $\mu>0$ and $\bar{x}\in\mathbb{R}^3$.

\smallskip

Furthermore, from (37) in Lemma 4.1 in \cite{DFHQW}, we have the following integral formula:
\begin{equation}\label{formula}
	\int_{\mathbb{R}^{n}}\frac{1}{|x-y|^{2\gamma}}\left(\frac{1}{1+|y|^{2}}\right)^{n-\gamma}\mathrm{d}y=I(\gamma)\left(\frac{1}{1+|x|^{2}}\right)^{\gamma}
\end{equation}
for any $n\geq1$ and $0<\gamma<\frac{n}{2}$, where $I(\gamma):=\frac{\pi^{\frac{n}{2}}\Gamma\left(\frac{n-2\gamma}{2}\right)}{\Gamma(n-\gamma)}$. By the IE system \eqref{6-2}, the integral formula \eqref{formula} with $n=3$ and direct calculations, we deduce that the constants $C$ and $C'$ in \eqref{6-38} can be calculated accurately by
\begin{equation}\label{6-39}
	C=\left(\frac{2\times 3^{2(5-\sigma)}}{I\left(\frac{\sigma}{2}\right)}\right)^{\frac{1}{24-5\sigma}} \qquad \text{and} \qquad C'=\left(\frac{3\times 2^{\frac{5}{2}}}{I\left(\frac{\sigma}{2}\right)^{\frac{5}{2}}}\right)^{\frac{1}{24-5\sigma}},
\end{equation}
where $I\left(\frac{\sigma}{2}\right)=\frac{\pi^{\frac{3}{2}}\Gamma\left(\frac{3-\sigma}{2}\right)}{\Gamma\left(3-\frac{\sigma}{2}\right)}$. This concludes our proof of Theorem \ref{thm1}.


\end{document}